\numberwithin{equation}{section}
\newtheorem{theorem}{Theorem}[section]
\newtheorem{proposition}[theorem]{Proposition}
\newtheorem{lemma}[theorem]{Lemma}
\theoremstyle{definition}
\theoremstyle{remark}
\newtheorem{remark}[theorem]{Remark}
\newcommand{\spnu}[2]{\langlerangle{#1}_{#2}}
\newcommand{\e}{{\bf{e}}}
\newcommand{\eab}{{\bf{e}}_{\alpha,\beta}}
\newcommand{\lnua}{{\rm{L}}^{2}(\eab)}
\newcommand{\eb}{{\bf{e}}_{\beta}}
\newcommand{\lnub}{{\rm{L}}^{2}(\eb)}
\newcommand{\Le}{{\rm{L}}^{2}({\bf{e}})}
\newcommand{\lrp}{{\rm{L}}^{2}(\R_+)}
\newcommand{\co}{{\rm{C}}_0(\R_+)}
\newcommand{\ci}{{\rm{C}}^{\infty}(\R_+)}
\newcommand{\cb}{{\rm{C}}_b(\R_+)}
\newcommand{\bca}{\overline{C}_{\alpha}}
\newcommand{\bba}{\overline{B}_{\alpha}}
\newcommand{\baa}{\overline{A}_{\alpha}}
\newcommand{\ra}{\bar{\beta}_{\alpha}}
\newcommand{\rra}{\bar{\rho}_{\alpha}}
\newcommand{\ebb}{\overline{{\bf{e}}}_{\gamma,\beta,\alpha}}
\newcommand{\lnubb}{{\rm{L}}^{2}(\ebb)}
\newcommand{\wn}{\mathcal{W}_n}
\newcommand{\Rn}{\mathcal{R}_n}
\newcommand{\ov}{\overline{\varsigma}}
\newcommand{\bo}[1]{{\rm{O}}\lb{#1}\rb}
\newcommand{\ew}{\overline{\eta}_{\alpha}}
\newcommand{\Wr}{\Psi}
\renewcommand{\limsup}{\mathop{\overline{\lim}}\limits}
\newcommand{\M}{\mathcal{M}}
\newcommand{\IInf}{\int_{0}^{\infty}}
\newcommand{\ind}[1]{\mathbb{I}_{\{#1\}}}
\newcommand{\lb}{\left (}
\newcommand{\rb}{\right )}
\newcommand{\lbb}{\left [}
\newcommand{\rbb}{\right ]}
\newcommand{\labs}{\left |}
\newcommand{\rabs}{\right |}
\newcommand{\langlerangle}[1]{\left<#1\right>}
\newcommand{\lbrb}[1]{\lb #1 \rb}
\newcommand{\labsrabs}[1]{\labs#1\rabs}
\newcommand{\lbbrb}[1]{\lbb#1\rb}
\newcommand{\lbrbb}[1]{\lb#1\rbb}
\newcommand{\lbcurly}{\left\{}
\newcommand{\rbcurly}{\right\}}
\newcommand{\lbcurlyrbcurly}[1]{\lbcurly#1\rbcurly}
\newcommand{\dg}[1]{#1_*}
\newcommand{\Lg}{{\rm{L}^{2}({\e})}}
\newcommand{\Pns}{(\mathcal{P}_n)}
\newcommand{\Pon}{\mathcal{P}_n}
\newcommand{\ratio}[1]{\frac{#1}{#1+1}}
\newcommand{\simi}{\stackrel{\infty}{\sim}}
\newcommand{\var}{\vartheta_\mu}
\newcommand{\Cb}{\mathbb{C}}
    \def\beq{\begin{eqnarray}}
    \def\eeq{\end{eqnarray}}
    \def\beqq{\begin{eqnarray*}}
    \def\eeqq{\end{eqnarray*}}
    \def\C{{\mathbb C}}
    \def\N{{\mathbb N}}
    \def\R{{\mathbb R}}
\newcommand{\Tab}{T_{\alpha}}
\author{P. Patie}
\address{School of Operations Research and Information Engineering, Cornell University, Ithaca, NY 14853.}
\email{	pp396@cornell.edu}
\author{M. Savov} \thanks{This work was partially supported by the Actions de Recherches Concert\'ees IAPAS, a fund of the french community  of Belgium. The second author also acknowledges the support of the project MOCT, which has received funding from the European Union’s
Horizon 2020 research and innovation programme under the Marie Sklodowska-Curie grant
agreement No 657025.}
\address{Institute of Mathematics and informatics,  Bulgarian academy of sciences, Akad.  Georgi Bonchev street
	Block 8, Sofia 113.}
\email{mladensavov@math.bas.bg}
\title{Cauchy Problem of the non-self-adjoint Gauss-Laguerre semigroups and uniform bounds for generalized Laguerre polynomials}
\begin{document}
\begin{abstract}
We propose a new approach to construct the eigenvalue expansion in a weighted Hilbert space of the solution to the Cauchy problem associated to Gauss-Laguerre invariant Markov semigroups that we introduce. Their  generators turn out to be  natural non-self-adjoint and non-local generalizations of the Laguerre differential operator. Our methods rely on  intertwining relations that we establish between these semigroups and  the classical Laguerre semigroup and combine with techniques  based on non-harmonic analysis. As a by-product we also provide regularity properties for the semigroups as well as for their heat kernels.
The biorthogonal sequences that appear in their eigenvalue expansion can be  expressed in terms of  sequences of polynomials, and they  generalize the Laguerre polynomials. By means of a delicate saddle point method,  we derive uniform asymptotic bounds that allow us to get an upper bound for their norms in weighted Hilbert spaces.  	We believe that this work opens a way to construct spectral expansions for more general non-self-adjoint Markov semigroups.
\end{abstract}
\keywords{Saddle point approximation, Bernstein functions, non-self-adjoint integro-differential operators, Laguerre polynomials, Markov semigroups, spectral theory
\\ \small\it 2010 Mathematical Subject Classification: 41A60, 47G20, 33C45, 47D07, 37A30}
\maketitle

\section{Introduction and main results}
 For any  $\alpha \in (0,1)$ and $\beta\in [1-\frac{1}{\alpha},\infty)$, we define the Gauss-Laguerre operator as the linear integro-differential operator which takes the form, for a smooth function $f$ on $x>0$,
\begin{eqnarray}  \label{eq:EK_der}
\mathbf{L}_{\alpha,\beta}\:f(x) &=& \lb {\rm{d}}_{\alpha,\beta} -x\rb f'(x)+\frac{\sin(\alpha \pi)}{\pi}x\int_0^1 f''(xy) g_{\alpha,\beta}(y) dy,
\end{eqnarray}
where  $\rm{d}_{\alpha,\beta}= \frac{\Gamma( \alpha \beta+\alpha +1)}{\Gamma(\alpha \beta +1)}$ and
\begin{equation}
g_{\alpha,\beta}(y)=\frac{\Gamma(\alpha)}{\beta+\frac1\alpha+1}y^{\beta+\frac1\alpha+1} \: {}_{2}F_{1}(\alpha (\beta+1)+1,\alpha+1;\alpha (\beta+1)+2;y^{\frac{1}{\alpha}}),
\end{equation} with ${}_{2}F_{1}$ the Gauss hypergeometric function. The terminology is motivated by the limit case $\alpha=1$ which will be proved to yield
\begin{eqnarray*}  \label{eq:EK_deriv}
	\mathbf{L}_{\beta}f(x) &=&\mathbf{L}_{1,\beta} f(x)= x f''(x) + \lb \beta+1 -x\rb f'(x),
\end{eqnarray*}
that is the Laguerre differential operator of order $\beta$. It is well known to be the generator of a self-adjoint contraction semigroup $ (Q^{(\beta)}_t)_{t\geq0}$ in the weighted Hilbert space $\lnub$, where ${\bf{e}}_{\beta}={\bf{e}}_{1,\beta}$ is the density of the unique invariant measure  and the later is defined in \eqref{eq:def_e} below. This semigroup as well as its eigenfunctions, the  Laguerre polynomials, have been and are still intensively studied as they play a central role in probability theory, functional analysis, representation theory, quantum mechanics and mathematical physics, see e.g.~ \cite{Bakry_Book}, \cite{Kost}, \cite{Zet} and the references therein.
The Gauss-Laguerre semigroup, whose infinitesimal generator shares some similarities with the classical Caputo fractional derivative of order $\alpha$,  also appear in some recent applications in biology, see e.g.~\cite{GL_Fragm} and \cite{GL_Biology} and the references therein.
 Similarly to the classical Laguerre semigroup, we shall now prove  the following fact where $\mathcal{A}$ stands for the algebra  of polynomials.
\begin{theorem} \label{thm:main1}
 For any  $\alpha \in (0,1)$ and $\beta\in [1-\frac{1}{\alpha},\infty)$, $\mathbf{L}_{\alpha,\beta}$ is the generator with core $\mathcal{A}$ of a non-self-adjoint contraction Markov semigroup $P=(P_t)_{t\geq 0}$ in the Hilbert space ${\rm{L}}^2(\eab)$ endowed with the norm  $||f||_{\eab}=\int_0^{\infty}f^2(x)\eab(x)dx$ where
 \begin{equation}\label{eq:def_e}
 \eab(x)dx=\frac{x^{\beta+\frac1\alpha-1} e^{-x^{\frac1\alpha}}}{\Gamma(\alpha \beta +1)}dx, \: x>0,
 \end{equation}
  is the unique invariant  measure of $P$.
\end{theorem}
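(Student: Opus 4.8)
We sketch the route we would follow. The plan is to import the entire structure from the classical Laguerre semigroup of a suitable index through a \emph{multiplicative intertwining kernel}, and to read off the Markov (hence contraction) property from that concrete kernel rather than from a direct estimate on the non-orthogonal eigenbasis of $\mathbf{L}_{\alpha,\beta}$. As a preliminary step one checks that $\mathbf{L}_{\alpha,\beta}$ maps $\mathcal{A}$ into $\mathcal{A}$ and is lower triangular in the monomial basis ordered by degree, with diagonal entries $(-n)_{n\geq 0}$: for $f=x^n$ the drift term contributes $-nx^n+n\mathrm{d}_{\alpha,\beta}x^{n-1}$ and the non-local term contributes $\tfrac{\sin(\alpha\pi)}{\pi}n(n-1)\bigl(\int_0^1 y^{n-2}g_{\alpha,\beta}(y)\,dy\bigr)x^{n-1}$, the integral being finite for $n\geq 2$ because $g_{\alpha,\beta}(y)\asymp y^{\beta+1/\alpha+1}$ near $0$ (positive exponent, since $\beta+1/\alpha\geq 1$) and $g_{\alpha,\beta}(y)\asymp (1-y)^{-\alpha}$ near $1$ (integrable as $\alpha<1$). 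Since the diagonal entries are pairwise distinct, $\mathbf{L}_{\alpha,\beta}$ is diagonalisable on each space of polynomials of degree $\leq n$, so there is a unique monic polynomial $\mathcal{P}_n$ of degree $n$ with $\mathbf{L}_{\alpha,\beta}\mathcal{P}_n=-n\mathcal{P}_n$; one has $\mathcal{P}_0\equiv 1$ and $(\mathcal{P}_n)_{n\geq 0}$ is an algebraic basis of $\mathcal{A}$.

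Next, a direct Mellin computation gives $\int_0^\infty x^{s-1}\eab(x)\,dx=\mathrm{const}\cdot\Gamma(\alpha(\beta+s-1)+1)/\Gamma(\alpha\beta+1)$, so that up to a normalising constant $\eab$ is the law of a positive variable $R$ with $\E[R^n]=\Gamma(\alpha(\beta+n)+1)/\Gamma(\alpha\beta+1)$. Comparing with the moments of the classical Laguerre equilibrium measure of a suitable index $\gamma>-1$ — one checks that $\gamma=\alpha\beta$ works, which is legitimate precisely because $\beta\geq 1-\tfrac1\alpha$ forces $\alpha\beta>\alpha-1>-1$ — we would introduce the multiplicative Markov kernel $\Lambda f(x)=\E[f(xV_{\alpha,\beta})]$, where $V_{\alpha,\beta}\in(0,1]$ is the positive variable with $\E[V_{\alpha,\beta}^{\,n}]=\Gamma(\alpha\beta+1+\alpha n)/\Gamma(\alpha\beta+1+n)$, chosen so that $\Lambda$ pushes the Laguerre equilibrium of index $\alpha\beta$ forward onto $\eab$. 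This $\Lambda$ is diagonal on monomials with strictly positive entries, hence degree-preserving and injective on $\mathcal{A}$, and the heart of this step is the generator intertwining
\[
\mathbf{L}_{\alpha,\beta}\,\Lambda f \;=\; \Lambda\,\mathbf{L}_{1,\alpha\beta}\,f, \qquad f\in\mathcal{A}.
\]
As both sides are triangular and $\Lambda$ is diagonal in the monomial basis, this collapses to an explicit identity relating the moments $\int_0^1 y^{n-2}g_{\alpha,\beta}(y)\,dy$ and $\E[V_{\alpha,\beta}^{\,n}]$, that is, a contiguous/Mellin--Barnes identity for the Gauss hypergeometric function defining $g_{\alpha,\beta}$; establishing it is the main computation of this step. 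It then follows at once that $\Lambda$ sends the Laguerre polynomial $L_n^{(\alpha\beta)}$ to a degree-$n$ eigenfunction of $\mathbf{L}_{\alpha,\beta}$ for the eigenvalue $-n$, i.e. $\Lambda L_n^{(\alpha\beta)}=c_n\mathcal{P}_n$ with $c_n\neq 0$.

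With $Q^{(\alpha\beta)}=(Q^{(\alpha\beta)}_t)_{t\geq 0}$ the classical Laguerre semigroup of index $\alpha\beta$ (so $Q^{(\alpha\beta)}_tL_n^{(\alpha\beta)}=e^{-nt}L_n^{(\alpha\beta)}$), we would define $P_t$ on $\mathcal{A}$ by $P_t\mathcal{P}_n=e^{-nt}\mathcal{P}_n$, equivalently by $P_t\Lambda=\Lambda Q^{(\alpha\beta)}_t$ on $\mathcal{A}$. Granting for the moment that each $P_t$ extends to a contraction of $\lnua$ (see below), the remaining assertions are soft. First, $\mathcal{A}$ is dense in $\lnua$ because the faster-than-exponential tails $e^{-x^{1/\alpha}}$ ($1/\alpha>1$) make the moment problem determinate (Carleman). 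On $\mathcal{A}$ one checks the semigroup property, $P_t1=1$, strong continuity ($\|P_t\mathcal{P}_n-\mathcal{P}_n\|_{\eab}=|e^{-nt}-1|\,\|\mathcal{P}_n\|_{\eab}\to 0$) and, for a finite sum $f=\sum a_n\mathcal{P}_n$, $t^{-1}(P_tf-f)\to-\sum na_n\mathcal{P}_n=\mathbf{L}_{\alpha,\beta}f$ in $\lnua$; hence $(P_t)$ extends to a $C_0$ contraction semigroup on $\lnua$ whose generator $\mathcal{G}$ satisfies $\mathcal{G}|_{\mathcal{A}}=\mathbf{L}_{\alpha,\beta}$, and since $\mathcal{A}$ is dense and (being spanned by eigenvectors of $\mathcal{G}$) invariant under $P_t$, it is a core. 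Invariance of $\eab$ is obtained either as a by-product of the probabilistic construction below or directly from the Mellin-transform recurrence $M(s)/M(s-1)=\mathrm{d}_{\alpha,\beta}+\tfrac{\sin(\alpha\pi)}{\pi}(s-2)\int_0^1 y^{s-3}g_{\alpha,\beta}(y)\,dy$ satisfied by $M(s)=\int_0^\infty x^{s-1}\eab(x)\,dx$; uniqueness follows because $0$ is a simple, isolated point of the spectrum $\{0,-1,-2,\dots\}$, so the semigroup is ergodic. Finally $\mathbf{L}_{\alpha,\beta}$ is patently not symmetric on $\mathcal{A}\subset\lnua$ for $\alpha<1$ — the non-local term destroys the Laguerre symmetry, equivalently the $\mathcal{P}_n$ are not mutually orthogonal — so $P$ is non-self-adjoint.

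The one genuinely delicate point is the $\lnua$-boundedness, a fortiori contractivity, of $P_t$: since $(\mathcal{P}_n)$ is not orthogonal — equivalently $\Lambda$ is not bounded below, $V_{\alpha,\beta}$ possessing only finitely many negative moments — diagonal action in this basis carries no a priori bound, and this is exactly the point at which the non-harmonic-analytic ingredients of the paper enter. The route we would take is to produce $P_t$ first as a bona fide (sub-)Markovian transition kernel and only then deduce the $\lnua$-contraction from Jensen's inequality together with the invariance of $\eab$. Concretely, after the Lamperti substitution $x=e^{\xi}$ the operator $\mathbf{L}_{\alpha,\beta}$ becomes an Ornstein--Uhlenbeck-type operator driven by a spectrally negative Lévy process whose Laplace exponent $\phi_{\alpha,\beta}$ is read off from $\mathrm{d}_{\alpha,\beta}$ and $g_{\alpha,\beta}$, and $P_t$ is realised through the exponential functional of that process (equivalently, $P_t=\Lambda\,Q^{(\alpha\beta)}_t\,\Lambda^{\dagger}$ for an appropriate dual multiplicative Markov kernel $\Lambda^{\dagger}$). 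The two pieces of real work are then: (a) verifying that $\phi_{\alpha,\beta}$ is indeed a Bernstein function and that its Lévy triplet reproduces the coefficients in \eqref{eq:EK_der} — this is precisely where the standing hypothesis $\beta\in[1-\tfrac1\alpha,\infty)$ is used, being equivalent to the relevant gamma-ratio being a Bernstein function, i.e. to $V_{\alpha,\beta}$ being a genuine probability law; and (b) the hypergeometric identity underlying the intertwining of the second step. We expect (a) to be the main obstacle.
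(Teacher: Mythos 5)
Your overall architecture is the one the paper actually follows: the triangular action of $\mathbf{L}_{\alpha,\beta}$ on monomials with diagonal $(-n)_{n\geq 0}$ and the resulting eigenpolynomials are Lemmas \ref{lem:l_pol} and \ref{lem:eigen}; the realisation of $(P_t)_{t\geq0}$ as a Feller semigroup via the logarithmic change of variable, a spectrally negative L\'evy generator and the self-similar-to-Ornstein--Uhlenbeck transformation $P_tf(x)=K_{e^t-1}f(e^{-t}x)$ is exactly Lemma \ref{lem:sem} (outsourced to Bartholm\'e--Patie); invariance of $\eab$ is checked on monomials and the $\lnua$-contraction then comes from positivity, mass conservation and the invariant measure, as in Lemma \ref{lem:l_pol}. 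The concrete error is in your intertwining kernel. For the direction you need, $P_t\Lambda=\Lambda Q_t^{(\alpha\beta)}$, writing $\Lambda p_n=v_np_n$ and matching the coefficient of $p_{n-1}$ in $\mathbf{L}_{\alpha,\beta}\Lambda p_n=\Lambda\mathbf{L}_{1,\alpha\beta}p_n$ forces $v_n\Phi_{\alpha,\beta}(n)=(n+\alpha\beta)v_{n-1}$, hence $v_n=\Gamma(n+\alpha\beta+1)/\Gamma(\alpha n+\alpha\beta+1)$, an \emph{unbounded} sequence; the sequence you wrote down, $\Gamma(\alpha n+\alpha\beta+1)/\Gamma(n+\alpha\beta+1)$, is its reciprocal and satisfies instead $\Lambda\mathbf{L}_{\alpha,\beta}=\mathbf{L}_{1,\alpha\beta}\Lambda$, from which $P_t$ cannot be read off from $Q_t^{(\alpha\beta)}$ on a dense set. (Equivalently: testing $P_t\Lambda=\Lambda Q_t$ against the invariant measures shows the kernel must map $\eab$ onto the Laguerre equilibrium, not the other way round; this is the content of the factorisation \eqref{eq:MellinIdentity}.) With the corrected multiplier the mixing variable is no longer supported in $(0,1]$, and proving that this unbounded moment sequence is still the Mellin transform of a probability density on $(0,\infty)$ is a genuine theorem — Proposition \ref{prop:Kernel}, proved via a L\'evy--Khintchine representation — not a routine diagonal computation.

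Two further steps are under-argued. Uniqueness of the invariant measure does not follow from ``$0$ is a simple isolated point of the spectrum, hence ergodicity'': that is an $\lnua$-statement, obtained only later, and it does not exclude invariant probability measures that are singular with respect to $\eab$ or whose density lies outside $\lnua$. The paper's Lemma \ref{lem:uniq} instead pushes an arbitrary invariant $\nu$ through the intertwining operator onto an invariant measure of the Laguerre semigroup and then uses that the multiplier $\M_{\lambda_{\alpha,\beta}}$ is zero-free on $\C_{(-1,\infty)}$ to invert and conclude $\nu(dx)=\eab(x)dx$; some argument of this kind is required. Similarly, non-self-adjointness is not ``patent'': the paper establishes it by solving $\Lambda^*_{\alpha,\beta}\mathcal{R}_n=\mathcal{L}_n$, which produces co-eigenfunctions $\mathcal{R}_n$ (polynomials in $x^{1/\alpha}$, not in $x$) distinct from the $\mathcal{P}_n$ (Lemma \ref{lem:co-eigen}); your alternative route — non-orthogonality of the $\mathcal{P}_n$ — is correct in substance but is itself proved in the paper only through Chihara's classification of orthogonal polynomials with Brenke-type generating functions (Lemma \ref{lem:pol}). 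Either way an actual verification is needed rather than an assertion.
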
	

The aims of this paper are  to provide (a)  a spectral representation in  the weighted Hilbert space $\lnua$ of the semigroup $(P_t)_{t\geq0}$, (b)  regularity properties of $P_tf$ for $f$ in various spaces, (c) an explicit representation and smoothness properties of the heat kernel (or the (density of) transition probabilities of the underlying Feller process). Note that this study allows to obtain an explicit representation and smoothness properties of the solution to the following Cauchy problem
\begin{equation} \label{eq:cauchy}
\begin{cases}
\frac{d }{d t} u_t(x) &= \mathbf{L}_{\alpha,\beta}\:u_t(x) \\
u_0(x) &= f(x) \in \mathcal{D},
\end{cases}
\end{equation}
where  $\mathcal{D}$ stands for the domain of $\mathbf{L}_{\alpha,\beta}$.
There are several motivations underlying this work. On the one hand, although the spectral theory for linear self-adjoint, or more generally normal, operators is well established, see e.g.~\cite{Dunford_II}, the spectral properties of non-self-adjoint operators is fragmentally understood. We refer for instance to the survey papers of Davies \cite{Davies_S}  and Sj\"o{strand} \cite{Sjostrand-Survey} for a nice account of recent developments in this area.  There are very few instances in the literature where the spectral expansion of non-self adjoint linear operators is available. Among the notable exceptions are the integral operators characterizing the formal inverses of Wilson divided difference operators, studied by Ismail and Zhang \cite{Ismail-Zhang}, and, the harmonic oscillator,  arising in quantum mechanics, and acting on ${\rm{L}}^2(\R_+)$, whose study has been initiated by Davies in \cite{Davies} and further developed by Davies and Kuijlaars \cite{Davies-Kuijlaars}. In the framework of Markov semigroups,  the spectral expansion of one dimensional self-adjoint diffusion  was developed by McKean \cite{McKean-Spectral}, and extended by Getoor \cite{Getoor_Spectral}, to some non local self-adjoint operators. Although  non-self-adjoint operators seem to be generic in the class of Markov semigroups, we are not aware of any results concerning the spectral representation in Hilbert space of a non-self-adjoint positive contraction semigroup.  On the other hand, the Gauss-Laguerre semigroup  turns out to play an essential role in the recent work by the authors \cite{Patie-Savov-GeL} concerning  the spectral expansion of a large class of non-self-adjoint invariant Markov semigroups. This class can be either characterized in terms of the generator which   takes the form of a linear combination (with non negative coefficients) of $\mathbf{L}_{\beta}$ and $\mathbf{L}_{\alpha,\beta}$, where for the later the function $g_{\alpha,\beta}$ can be any positive convex functions satisfying a mild integrability condition. Another characterization could be made through a bijection that we established between this class of semigroups and the set of Bernstein functions, which appears in the action of the generator on monomials, as in \eqref{eq:action_pol} below, with the Bernstein function $\Phi_{\alpha,\beta}$. In the aforementioned paper, the Gauss-Laguerre semigroup serves as a reference semigroup, via an intertwining relation, with  the class of semigroups associated to regularly varying Bernstein functions. This  concept of reference semigroups allows for instance to obtain estimates for the norms of the co-eigenfunctions of seemingly intractable operators.

Coming back to the present work, it aims at presenting a new methodology, which contains some comprehensive idea, for developing the spectral expansion of the Markov semigroup $(P_t)_{t\geq0}$ thus opening the possibility to understand better the spectral expansions of more general Markov semigroups. Our first main idea is to derive an intertwining relation, via a Markov operator,  between the class of non-self-adjoint Gauss-Laguerre semigroups and the classical Laguerre semigroup of order $0$.  We say  that a linear operator $\Lambda_{\lambda}\:$ is a Markov operator if, for any $f \in B_b(\R_+)$, the set of bounded Borel functions on $\R_+$,
\begin{equation} \label{eq:def_MK} \Lambda_{\lambda}\: f(x)=\int_0^{\infty}f(xy)\lambda(y)dy, \end{equation}
where $\lambda$ is the density of a probability measure, i.e.~$\lambda\geq 0$ and $\int_{0}^{\infty}\lambda(y)dy=1$.  More specifically,  defining  the entire function $\lambda_{\alpha,\beta}$ by
	\begin{eqnarray} \label{eq:def_lambda}
		\lambda_{\alpha,\beta}(z) &=& \frac{\Gamma(\alpha \beta+1-\alpha)}{\pi}\sum_{k=0}^{\infty} \Gamma(\alpha k + \alpha(1- \beta)) \sin\lb \alpha(k+1- \beta)\pi\rb \frac{z^k}{k!}, \: z\in\C,
	\end{eqnarray}
 we have the following result, with the notation   $\Lambda_{{\alpha,\beta}}=\Lambda_{\lambda_{\alpha,\beta}}$, $\e = \e_{1,0}$ and where $(Q_t)_{t\geq0} = (Q^{(0)}_t)_{t\geq0} $ stands for the Laguerre semigroup of order $0$.
\begin{theorem} \label{thm:main2} 	
$\Lambda_{{\alpha,\beta}}:\Lg \mapsto \lnua$ is a one-to-one bounded Markov  operator with a dense range, i.e.~$\overline{{\rm{Ran}}}(\Lambda_{{\alpha,\beta}})=\lnua$. Moreover, for any $t\geq0$, the intertwining relation
		\begin{equation}\label{eq:inter}
		P_t \: \Lambda_{\alpha,\beta}  = \Lambda_{\alpha,\beta}\: Q_t
		\end{equation}
	holds on  $\Lg$.
\end{theorem}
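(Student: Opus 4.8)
The whole statement is governed by the Mellin transform of $\lambda_{\alpha,\beta}$, so the plan is to pin this down first and deduce each assertion from it. In \eqref{eq:def_lambda} the argument of the Gamma factor and of the sine coincide, both being $\alpha(k+1-\beta)$, so Euler's reflection formula $\Gamma(w)\sin(\pi w)=\pi/\Gamma(1-w)$ collapses the coefficients and presents $\lambda_{\alpha,\beta}$ as (a multiple of) the Wright function $z\mapsto\sum_{k\ge0}z^{k}/\bigl(k!\,\Gamma(\alpha\beta+1-\alpha-\alpha k)\bigr)$ of negative index $-\alpha\in(-1,0)$. I would then identify $\lambda_{\alpha,\beta}$ as the density of the explicit positive random variable $V_{\alpha,\beta}\stackrel{d}{=}\bigl(\mathbf{B}/\mathtt{S}_{\alpha}\bigr)^{\alpha}$, where $\mathbf{B}$ has the $\mathrm{Beta}(1,\alpha\beta)$ law (replaced by the corresponding negative-index Gamma factor when $\alpha\beta\le0$) and $\mathtt{S}_{\alpha}$ is an independent positive $\alpha$-stable variable normalised by $\E[e^{-q\mathtt{S}_{\alpha}}]=e^{-q^{\alpha}}$; concretely this means verifying that the moments are
\begin{equation*}
\mathfrak{m}_{n}:=\int_{0}^{\infty}y^{n}\lambda_{\alpha,\beta}(y)\,dy=\frac{\Gamma(n+1)\,\Gamma(\alpha\beta+1)}{\Gamma(\alpha n+\alpha\beta+1)}=\frac{n!}{\prod_{k=1}^{n}\Phi_{\alpha,\beta}(k)},\qquad n\ge0,
\end{equation*}
$\Phi_{\alpha,\beta}$ being the Bernstein function attached to $\mathbf{L}_{\alpha,\beta}$. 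Since the defining series cannot be integrated term by term, this has to come from the probabilistic representation (or, equivalently, from the Mellin--Barnes integral for the Wright function). Non-negativity of $\lambda_{\alpha,\beta}$ is then immediate, and $\mathfrak{m}_{0}=1$ gives unit mass, so $\Lambda_{\alpha,\beta}$ is a Markov operator with $\Lambda_{\alpha,\beta}x^{n}=\mathfrak{m}_{n}x^{n}$.

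For the intertwining I would first prove $\mathbf{L}_{\alpha,\beta}\Lambda_{\alpha,\beta}=\Lambda_{\alpha,\beta}\mathbf{L}_{0}$ on the core $\mathcal{A}$, where $\mathbf{L}_{0}=\mathbf{L}_{1,0}$, $\mathbf{L}_{0}f(x)=xf''(x)+(1-x)f'(x)$, generates $Q$. Since $\mathbf{L}_{0}x^{n}=-nx^{n}+n^{2}x^{n-1}$ and, by \eqref{eq:EK_der}, $\mathbf{L}_{\alpha,\beta}x^{n}=-nx^{n}+c_{n}x^{n-1}$, the identity reduces — using $\Lambda_{\alpha,\beta}x^{k}=\mathfrak{m}_{k}x^{k}$ — to the single scalar relation $c_{n}\mathfrak{m}_{n}=n^{2}\mathfrak{m}_{n-1}$ for each $n\ge1$. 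Here one does not need the explicit $c_{n}$ coming from \eqref{eq:EK_der}: testing the $\mathbf{L}_{\alpha,\beta}$-invariance of $\eab$ from \refthm{thm:main1} against $x^{n}$ gives at once $c_{n}=n\,m_{n}/m_{n-1}$ with $m_{n}=\int_{0}^{\infty}x^{n}\eab(x)\,dx$, hence $c_{n}=n\,\Gamma(\alpha n+\alpha\beta+1)/\Gamma(\alpha n+\alpha\beta+1-\alpha)$, and then $c_{n}\mathfrak{m}_{n}=n^{2}\mathfrak{m}_{n-1}$ is exactly the recursion satisfied by the formula for $\mathfrak{m}_{n}$. To upgrade the generator identity to \eqref{eq:inter} on $\Lg$ I would run the standard argument: both $Q_{t}$ and $P_{t}$ leave $\mathcal{A}$ invariant, so for $f\in\mathcal{A}$ the curve $t\mapsto P_{t}\Lambda_{\alpha,\beta}Q_{T-t}f$ has derivative $P_{t}\bigl(\mathbf{L}_{\alpha,\beta}\Lambda_{\alpha,\beta}-\Lambda_{\alpha,\beta}\mathbf{L}_{0}\bigr)Q_{T-t}f=0$, hence is constant, giving $\Lambda_{\alpha,\beta}Q_{T}f=P_{T}\Lambda_{\alpha,\beta}f$; this extends to all of $\Lg$ because $\mathcal{A}$ is dense there (the weight $\e$, of density $e^{-x}$, has finite exponential moments, so its moment problem is determinate) and all operators in sight are bounded.

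The remaining two properties are then short. The range is dense because $\Lambda_{\alpha,\beta}$ sends the monomial basis of $\mathcal{A}$ onto $\{\mathfrak{m}_{n}x^{n}\}$, again a basis of $\mathcal{A}$, so $\overline{{\rm{Ran}}}(\Lambda_{\alpha,\beta})\supseteq\overline{\mathcal{A}}=\lnua$ — the weight $\eab$ decays like $e^{-x^{1/\alpha}}$ with $1/\alpha\ge1$, again a determinate moment problem. Injectivity follows from a uniqueness argument for multiplicative-convolution operators: after the substitution $x=e^{-\xi}$, $\Lambda_{\alpha,\beta}$ becomes convolution with a kernel whose transform is $\tau\mapsto\mathcal{M}_{\lambda_{\alpha,\beta}}(1+i\tau)$, a quotient of Gamma functions that has no zeros in the right half-plane and hence does not vanish; so $\Lambda_{\alpha,\beta}f=0$ forces $f=0$.

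The step I expect to be the main obstacle is the boundedness $\Lambda_{\alpha,\beta}:\Lg\to\lnua$. Since $\Lambda_{\alpha,\beta}$ is Markov, Jensen's inequality gives $(\Lambda_{\alpha,\beta}f)^{2}(x)\le\int_{0}^{\infty}f^{2}(xy)\lambda_{\alpha,\beta}(y)\,dy$, and integrating against $\eab$ and applying Fubini reduces the claim to the pointwise estimate
\begin{equation*}
\int_{0}^{\infty}\lambda_{\alpha,\beta}(y)\,\eab(u/y)\,\frac{dy}{y}\le C\,\e(u)\qquad\text{for a.e.\ }u>0,
\end{equation*}
that is, to controlling $\int_{0}^{\infty}\lambda_{\alpha,\beta}(y)\,(u/y)^{\beta+1/\alpha-1}e^{-(u/y)^{1/\alpha}}\,y^{-1}\,dy$ against $e^{-u}$. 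The small-$y$ range is harmless thanks to the super-exponential factor $e^{-(u/y)^{1/\alpha}}$, but the large-$y$ range requires the finiteness of the negative moment $\int^{\infty}\lambda_{\alpha,\beta}(y)\,y^{-\beta-1/\alpha}\,dy$, and the exponent $1-\beta-1/\alpha$ sits exactly at (or below) the abscissa of convergence $0$ of $\mathcal{M}_{\lambda_{\alpha,\beta}}$ — this is precisely where the standing hypothesis $\beta\ge1-\tfrac1\alpha$ is used, and the borderline case $\beta=1-\tfrac1\alpha$ is genuinely delicate, calling for the sharp small-argument asymptotics of the Wright function $\lambda_{\alpha,\beta}$ near the origin (mirroring the simple pole of $\mathcal{M}_{\lambda_{\alpha,\beta}}$ at $s=0$), or, equivalently, a contractivity criterion for Markov dilation kernels between such weighted Hilbert spaces phrased through their Mellin symbols.
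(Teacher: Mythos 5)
Your architecture tracks the paper's quite closely (Mellin multiplier, action on monomials for the dense range, Jensen for the norm bound), and two of your choices are legitimate variants: the paper establishes positivity of $\lambda_{\alpha,\beta}$ by exhibiting $e^y\lambda_{\alpha,\beta}(e^y)$ as an infinitely divisible density with an explicit L\'evy--Khintchine exponent rather than through your Beta--stable factorization $(\mathbf{B}/\mathtt{S}_{\alpha})^{\alpha}$ (which is fine for $\alpha\beta>0$ but is only gestured at in the regime $\alpha\beta\le 0$ permitted by $\beta\ge 1-\frac1\alpha$, exactly where positivity needs an argument); and the paper derives \eqref{eq:inter} through the eigenfunctions ($\Lambda_{\alpha,\beta}\mathcal{L}_n=\mathcal{P}_n$, $P_t\mathcal{P}_n=e^{-nt}\mathcal{P}_n$, extend by density) rather than through your generator-level identity plus the derivative of $t\mapsto P_t\Lambda_{\alpha,\beta}Q_{T-t}f$; both routes reduce to the same scalar relation $n\Phi_{\alpha,\beta}(n)\mathfrak{m}_n=n^2\mathfrak{m}_{n-1}$, so this is a matter of taste.

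The genuine gap is the one step you declare to be the main obstacle and then leave open: the boundedness of $\Lambda_{\alpha,\beta}:\Lg\to\lnua$. Your Jensen--Fubini reduction is correct, but the inequality you arrive at is not delicate at all --- it is an exact identity with $C=1$, and no Wright-function asymptotics or borderline case analysis is needed. The left-hand side is the multiplicative convolution of the densities $\lambda_{\alpha,\beta}$ and $\eab$, whose Mellin transform is
\[
\M_{\lambda_{\alpha,\beta}}(s)\,\M_{\eab}(s)=\frac{\Gamma(s+1)\Gamma(\alpha\beta+1)}{\Gamma(\alpha s+\alpha\beta+1)}\cdot\frac{\Gamma(\alpha s+\alpha\beta+1)}{\Gamma(\alpha\beta+1)}=\Gamma(s+1)=\M_{\e}(s),
\]
i.e.\ precisely the Gamma cancellation you already used to compute $\mathfrak{m}_n$; Mellin inversion of two positive integrable functions on a common vertical strip gives $\int_0^\infty\lambda_{\alpha,\beta}(y)\,\eab(u/y)\,\frac{dy}{y}=\e(u)$ for a.e.\ $u>0$. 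This is the paper's ``factorization of Markov operators'' $\Lambda_{\alpha,\beta}\Lambda_{\eab}=\Lambda_{\e}$, and with it your computation closes in one line, yielding the contraction $\|\Lambda_{\alpha,\beta}f\|_{\eab}\le\|f\|_{\e}$. (Your worry about the negative moment $\int^\infty\lambda_{\alpha,\beta}(y)y^{-\beta-1/\alpha}dy$ is also misplaced: negative moments of a probability density restricted to $y\ge1$ are trivially finite; the abscissa of $\M_{\lambda_{\alpha,\beta}}$ at $s=-1$ reflects behavior near $y=0$, where the factor $e^{-(u/y)^{1/\alpha}}$ supplies all needed decay.) As written, the proposal does not prove boundedness; replacing your final paragraph with the identity above repairs it. A smaller point: your injectivity argument via the non-vanishing of $\tau\mapsto\M_{\lambda_{\alpha,\beta}}(1+i\tau)$ needs a word on why it applies to a general $f\in\Lg$, which need not lie in the $L^2$ space of the Haar measure $dx/x$ where the Fourier/Plancherel argument lives.
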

\begin{remark}
	\begin{enumerate}
			\item Although, by means of the Marcinkiewicz multiplier theorem for Mellin transform, see \cite{Rooney-Muck},  it is an  easy exercise to show, from the asymptotic behavior of its Mellin multiplier, see \eqref{eq:def_Mellin_X} below, that a Markov operator is bounded from $\rm{L}^2(\var)$, $\var(x)=x^{-\alpha},x>0$, into itself, the continuity property on a weighted Hilbert spaces is in general a difficult problem. One classical approach is to consider weights which belong to the so-called class of Muchkenboupt, conditions which are not satisfied by $\e$. Instead, we identify a factorization of Markov operators which allows to derive by a simple application of Jensen inequality the contraction property.
		\item With the aim of developing the spectral expansion of the semigroup $P$, we mention that the intertwining relation \eqref{eq:inter}  goes beyond perturbation theory. Indeed, clearly $\mathbf{L}_{\alpha,\beta}$  is by no means a perturbation of a self-adjoint operator whereas the relation \eqref{eq:inter} relates it to a self-adjoint operator.
	\end{enumerate}
\end{remark}		
We shall exploit the intertwining relation to develop the spectral representation of $(P_t)_{t\geq0}$. Although the literature on intertwining relations between Markov semigroups and its applications is very rich, see for instance Dynkin \cite{Dynkin-69}, Pitman and Rogers \cite{Pitman-Rogers-81} and Carmona et al.~\cite{Carmona-Petit-Yor-98}, it does not seem that it has served for this purpose.  On the other hand, this type of commutation relation between linear operators have been
also intensively studied  in the context of differential operators. This
approach culminated in the work of Delsarte and Lions \cite{Delsarte-Lions-57} who showed the existence of a
transmutation operator between differential operators of the same order and acting on the
space of entire functions. The transmutation operator, which plays the role of the intertwining
operator, is in fact an isomorphism on this space. This property is very useful for the spectral
reduction of these operators since it allows to transfer the spectral objects. We mention
that Delsarte and Lions's development has been intensively used in scattering theory and in
the theory of special functions, see e.g.~Carroll and Gilbert [17]. We shall prove that our intertwining operator is not bounded from below, a property which makes the analysis of the spectral expansion more delicate than in the framework of transmutation operators. To overcome this difficulty, we resort to the concept of frames, a generalization of orthogonal sequences that has been introduced by
Duffin  and Schaeffer \cite{Duffin-Schaeffer-52}  to study some deep problems in non-harmonic Fourier series.
Next, we recall that,  by means of the spectral theory for self-adjoint operators, one obtains, for any $f \in \lnub$ and $t>0$, the classical spectral expansion
 \begin{equation}\label{eq:expansionLaguerre}
 Q^{(\beta)}_t f(x) = \sum_{n=0}^{\infty}e^{-n t} \langle f,\mathcal{L}^{(\beta)}_n \rangle_{\eb} \:  \overline{\beta}^{-2}_n\: \mathcal{L}^{(\beta)}_n(x) \qquad \textrm{ in } \lnub,
 \end{equation}
 where $\overline{\beta}^{2}_n = \frac{\Gamma(n+1)\Gamma(\beta+1)}{\Gamma(n+\beta+1)}$,  $\mathcal{L}^{(\beta)}_n$ is the Laguerre polynomial of order $\beta$ defined as
 \begin{equation} \label{eq:def_lag_b}
 \overline{\beta}^2_n \: \mathcal{L}^{(\beta)}_n(x)= \sum_{k=0}^n (-1)^k \frac{{ n \choose k}}{{ k+\beta \choose \beta}} \frac{x^k }{k!}= \Gamma(\beta+1)\sum_{k=0}^n (-1)^k  \frac{{ n \choose k}}{\Gamma(k+\beta+1)}x^k,
 \end{equation}
 and, the sequence $(\overline{\beta}_n \mathcal{L}^{(\beta)}_n)_{n\geq 0}$ is an orthonormal  sequence in $\lnub$. Before stating the next result, we proceed with some further notation. For any $x\geq0$, we set $\mathcal{P}_0(x)=1$ and for any $n\geq 1$, we introduce the polynomials
\begin{equation}\label{eq:Pn}
 \mathcal{P}_n(x) =\Gamma(\alpha \beta +1) \sum_{k=0}^n (-1)^k\frac{{ n \choose k}}{\Gamma(\alpha k + \alpha \beta +1)} x^k.
 \end{equation}
Note that for $\alpha=1$, $\mathcal{P}_n(x) = \overline{\beta}^2_n\mathcal{L}_n^{(\beta)}(x)= \Gamma(\beta +1) \sum_{k=0}^n (-1)^k\frac{{ n \choose k}}{\Gamma( k + \beta +1)} x^k$ is the classical Laguerre polynomial of order $\beta\geq 0$. Moreover,
 for any  $ x\geq0$ and $n\in\N$,  we write  \begin{equation}
 \mathcal{R}_n(x)={\rm{R}}^{(n)}_{\eab} \eab(x)=\frac{(-1)^n}{n!\eab(x)} (x^{n}\eab(x))^{(n)},
 \end{equation}
 where   $\rm{R}^{(n)}_{\eab}$ is the weighted Rodrigues operator and  $f^{(n)}=\frac{d^n}{dx^n}f$. From  the Rodrigues representation of the Laguerre polynomials, we also get that for $\alpha=1$, $\mathcal{R}_n(x)=\mathcal{L}_n^{(\beta)}(x)$. Finally, we  define, for any $0<\gamma<\alpha$ and $\ew>0$ fixed,
 \begin{equation}
 \ebb(x)=x^{\beta+\frac1\alpha-1} e^{\ew x^{\frac1\gamma}}, \: x>0,
 \end{equation}
 where we recall that $\alpha \in (0,1)$ and $\beta\in [1-\frac{1}{\alpha},\infty)$, and  set \[\Tab =-\ln\lbrb{2^{\alpha}-1}.\] We are now ready to state the main result of the paper. 
\begin{theorem} \label{thm:main}
	\begin{enumerate}[(a)]
\item For any $f \in \lnua$ (resp.~$f  \in {\rm{Ran}}(\Lambda_{\alpha,\beta}) \cup \lnubb $)  
we have
\begin{equation} \label{eq:exp}
P_t f(x) = \sum_{n=0}^{\infty} e^{-nt} \spnu{f,\mathcal{R}_n}{\eab} \mathcal{P}_n(x),
\end{equation}
where, for any $t>\Tab$ (resp.~$t>0$), the  identity holds  in  $\lnua$. $P_t$ is holomorphic in $t$ on $\C_{(\Tab,\infty)}=\{z\in \C;\: \Re(z)>\Tab\}$.
\item For any  $f \in \lnua$ (resp.~$f  \in {{\rm{Ran}}}(\Lambda_{\alpha,\beta}) \cup \lnubb $), $ (t,x)\mapsto P_t f(x) \in {\rm{C}}^{\infty}\lb (T_\alpha,\infty) \times \R_+ \rb$ (resp.~$\in {\rm{C}}^{\infty}(\R^2_+)$), and for any integers $k,p$,
\begin{eqnarray*} \label{eq:exp_derv}
	\frac{d^k}{dt^k}(P_t f)^{(p)}(x) =\sum_{n=p}^{\infty}(-n)^k e^{-n t} \langle  f,\mathcal{R}_n \rangle_{\eab} \:  \: \mathcal{P}^{(p)}_{n}(x)
	\end{eqnarray*}
	where, for any $t>\Tab$ (resp.~$t>0$), the series converges locally uniformly on $\R_+$.

\item The heat kernel is absolutely continuous with  a density $ (t,x,y) \mapsto P_t(x,y) \in {\rm{C}}^{\infty}(\R^3_+)$, given for  any $t,y>0$,  $x\geq0$, and for any integers $k,p,q$, by
 \begin{eqnarray} \label{eq:exp_dervallr}
 \frac{d^k}{dt^k}P^{(p,q)}_t(x,y) =\sum_{n=p}^{\infty}(-n)^k e^{-n t} \mathcal{W}^{(q)}_n(y)   \: \mathcal{P}^{(p)}_{n}(x),
 \end{eqnarray}
 where the series is locally uniformly  convergent on $\R^3_+$, and, for $n\geq0$, $\mathcal{W}_n(y)=\mathcal{R}_n(y)\eab(y)$.
\item $(P_t)_{t\geq0}$ is   a strong Feller semigroup, i.e.~ for any $t>0$ and $f \in B_b(\R_+)$, $P_tf \in \cb $, where $\cb$ is the space of bounded continuous functions on $\R_+$.
\end{enumerate}
\end{theorem}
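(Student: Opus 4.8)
The plan is to transport the classical spectral expansion \eqref{eq:expansionLaguerre} of the self-adjoint Laguerre semigroup $(Q_t)_{t\geq0}=(Q^{(0)}_t)_{t\geq0}$ through the intertwining relation \eqref{eq:inter}, and then to control the resulting \emph{non-orthogonal} series by means of sharp bounds on the weighted norms of $\Pon$ and $\Rn$. First I would pin down the biorthogonal system. Since a Markov operator is a Mellin multiplier, $\Lambda_{\alpha,\beta}$ acts on monomials by $x^k\mapsto \frac{k!\,\Gamma(\alpha\beta+1)}{\Gamma(\alpha k+\alpha\beta+1)}x^k$ (the value at $k+1$ of the Mellin transform of $\lambda_{\alpha,\beta}$, which equals $1$ at $k=0$ because $\lambda_{\alpha,\beta}$ is a density); comparing \eqref{eq:Pn} with \eqref{eq:def_lag_b} for $\beta=0$ (where $\overline{\beta}_n=1$) this gives $\Lambda_{\alpha,\beta}\mathcal{L}^{(0)}_n=\Pon$, and \eqref{eq:inter} then yields $P_t\Pon=e^{-nt}\Pon$, so the $\Pon$ are eigenfunctions of $P_t$ for the eigenvalue $e^{-nt}$ and $\mathbf{L}_{\alpha,\beta}\Pon=-n\Pon$. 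Writing $\wn=\Rn\eab=\frac{(-1)^n}{n!}(x^n\eab)^{(n)}$ and integrating by parts $n\wedge m$ times --- the boundary terms vanishing because $\beta\geq 1-\tfrac1\alpha$ and $\eab$ decays like $e^{-x^{1/\alpha}}$ --- I obtain the biorthogonality $\spnu{\Pon,\mathcal{R}_m}{\eab}=\delta_{n,m}$. Testing the identity $\spnu{\Lambda_{\alpha,\beta}g,\Rn}{\eab}=\langle g,\Lambda_{\alpha,\beta}^*\Rn\rangle_{\e}$ against the complete orthonormal system $(\mathcal{L}^{(0)}_m)_{m\geq0}$ of $\Lg$ then forces $\Lambda_{\alpha,\beta}^*\Rn=\mathcal{L}^{(0)}_n$, whence $\spnu{\Lambda_{\alpha,\beta}g,\Rn}{\eab}=\langle g,\mathcal{L}^{(0)}_n\rangle_{\e}$.

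Next I would establish \eqref{eq:exp} on $\mathrm{Ran}(\Lambda_{\alpha,\beta})$ for all $t>0$. Applying $\Lambda_{\alpha,\beta}$ to \eqref{eq:expansionLaguerre} with $\beta=0$, using its continuity to pass it inside the series and the computations above, gives, for $f=\Lambda_{\alpha,\beta}g$, $P_tf=\sum_{n\geq0}e^{-nt}\spnu{f,\Rn}{\eab}\Pon$. Because $\|\Pon\|_{\eab}=\|\Lambda_{\alpha,\beta}\mathcal{L}^{(0)}_n\|_{\eab}\leq\|\Lambda_{\alpha,\beta}\|$ is bounded uniformly in $n$ and $(\langle g,\mathcal{L}^{(0)}_n\rangle_{\e})_{n\geq0}\in\ell^2$, this series converges absolutely in $\lnua$ for every $t>0$; combining this with the elementary sub-exponential estimate $\sup_{x\in K}|\Pon^{(p)}(x)|\leq e^{o(n)}$ on compacts $K\subset\R_+$ --- read off from the explicit coefficients in \eqref{eq:Pn} via the asymptotics of Wright-type functions --- yields (b) and the first identity of (c) on this class for all $t>0$. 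The case $f\in\lnubb$ is handled by the same scheme once one checks, by Cauchy--Schwarz against the weight $\ebb$ and the very fast decay of the leading coefficients of $\Rn$, that $\spnu{f,\Rn}{\eab}$ decays faster than any geometric sequence, so again $t>0$ suffices.

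To extend \eqref{eq:exp} to all of $\lnua$ I would invoke the paper's main analytic input: the uniform asymptotic bounds produced by the saddle point method, which give $\limsup_{n\to\infty}\|\Rn\|_{\eab}^{1/n}\leq e^{\Tab}=(2^{\alpha}-1)^{-1}$ together with a merely sub-exponential bound for $\wn^{(q)}$, locally uniformly on $\R_+$. Then for $t>\Tab$ the operator $f\mapsto\sum_{n\geq0}e^{-nt}\spnu{f,\Rn}{\eab}\Pon$ is bounded on $\lnua$, with norm at most $\|\Lambda_{\alpha,\beta}\|\sum_{n\geq0}e^{-nt}\|\Rn\|_{\eab}<\infty$; since it coincides with $P_t$ on the dense subspace $\mathrm{Ran}(\Lambda_{\alpha,\beta})$ and $P_t$ is bounded (\refthm{thm:main1}), the identity \eqref{eq:exp} holds on $\lnua$ for $t>\Tab$. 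Replacing $e^{-nt}$ by $e^{-nz}$ and using the same bound, the series converges absolutely and locally uniformly on $\C_{(\Tab,\infty)}$, whence Weierstrass' theorem gives the holomorphy of $t\mapsto P_t$ there; inserting the factors $(-n)^{k}$ and the sub-exponential bounds on $\sup_K|\Pon^{(p)}|$ and $\wn^{(q)}$ legitimates the term-by-term differentiations of (b) and, after Fubini (valid since $\|\wn\|_{L^{1}(\R_+)}\leq\|\Rn\|_{\eab}$) in the representation $P_tf(x)=\int_0^\infty f(y)\sum_{n\geq0}e^{-nt}\Pon(x)\wn(y)\,dy$, identifies the heat kernel. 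Here the bilinear series $\sum_n e^{-nt}\Pon(x)\wn(y)$ converges locally uniformly on $\R^{3}_+$ for \emph{every} $t>0$ thanks to the pointwise sub-exponential bounds on $\wn$, which gives its $C^\infty$-smoothness and \eqref{eq:exp_dervallr} on all of $\R^3_+$, while mass conservation $\int_0^\infty P_t(x,y)\,dy=1$ follows from $\int_0^\infty\wn(y)\,dy=\spnu{\mathcal{P}_0,\Rn}{\eab}=\delta_{0,n}$ and $\mathcal{P}_0\equiv1$. Assertion (d) is then immediate: for $f\in B_b(\R_+)$ and $t>0$, $P_tf(x)=\int_0^\infty f(y)P_t(x,y)\,dy$ with $P_t(\cdot,\cdot)$ jointly continuous and $\int_0^\infty P_t(x,y)\,dy=1$, so dominated convergence makes $x\mapsto P_tf(x)$ continuous and $\|P_tf\|_\infty\leq\|f\|_\infty$, i.e.\ $P_tf\in\cb$.

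The hard part is the analytic estimate underlying the third step: the delicate saddle point analysis of the Cauchy-type integral $\wn(x)=\frac{(-1)^n}{2\pi\i}\oint \frac{z^n\eab(z)}{(z-x)^{n+1}}\,dz$, which must simultaneously produce the sharp exponential rate $(2^\alpha-1)^{-1}$ for $\|\Rn\|_{\eab}$ (hence the threshold $\Tab$) and the companion sub-exponential pointwise bounds for $\wn$ on compacta. This is exactly where the non-self-adjointness bites: $(\Pon)_{n\geq0}$ and $(\Rn)_{n\geq0}$ form only a biorthogonal, non-Riesz system --- the intertwining operator $\Lambda_{\alpha,\beta}$ is not bounded below --- so no Parseval-type identity is available to control the synthesis series, and one must instead rely on these asymptotic bounds, together with the non-harmonic/frame viewpoint, to tame it; the exponential damping $e^{-nt}$ for $t>\Tab$ is precisely what restores absolute convergence.
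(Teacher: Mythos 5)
Your overall architecture is the one the paper follows: transport the Laguerre expansion \eqref{eq:expansionLaguerre} through the intertwining to get \eqref{eq:exp} on ${\rm{Ran}}(\Lambda_{\alpha,\beta})$ for all $t>0$; use the norm estimate $||\Rn||_{\eab}=\bo{e^{\Tab n}}$ with Cauchy--Schwarz and density of ${\rm{Ran}}(\Lambda_{\alpha,\beta})$ to extend to all of $\lnua$ for $t>\Tab$; use the refined bound \eqref{eq:bound_norm_new} for $f\in\lnubb$; and feed the pointwise sub-exponential bounds on $\mathcal{P}^{(p)}_n$ and $\mathcal{W}^{(q)}_n$ into the term-by-term differentiation and the heat-kernel series. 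The minor variations (bounding the synthesis series by $\sum_n e^{-nt}||\Rn||_{\eab}\,||\Pon||_{\eab}$ rather than by the Bessel property of $\Pns$, and deriving the strong Feller property directly from the jointly continuous mass-one kernel rather than citing Schilling) are harmless.

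There is, however, one genuine gap, precisely at the point the theorem is most delicate: for $f\in\lnubb$ and $0<t\leq\Tab$ you only establish that the series $S_tf=\sum_n e^{-nt}\spnu{f,\Rn}{\eab}\Pon$ \emph{converges} in $\lnua$; you do not show that its sum equals $P_tf$. Your two identification devices are unavailable here: such an $f$ need not lie in ${\rm{Ran}}(\Lambda_{\alpha,\beta})$, so you cannot push $\Lambda_{\alpha,\beta}$ through the Laguerre expansion, and $S_t$ is not known to be bounded on $\lnua$ for $t\leq\Tab$, so the density argument that works for $t>\Tab$ does not apply. The paper closes this by testing against the co-eigenfunctions: biorthogonality gives $\spnu{S_tf,\mathcal{R}_m}{\eab}=e^{-mt}\spnu{f,\mathcal{R}_m}{\eab}=\spnu{P_tf,\mathcal{R}_m}{\eab}$ for every $m$ (the second equality because $\mathcal{R}_m$ is a co-eigenfunction of $P_t$), so $S_tf-P_tf\in{\rm{Span}}(\mathcal{R}_n)^{\perp}$, and the completeness $\overline{{\rm{Span}}}(\mathcal{R}_n)=\lnua$ from Proposition \ref{prop:sequence} forces $S_tf=P_tf$. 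Without some such argument the $\lnubb$ half of parts (a) and (b) is unproven. A second, smaller point: your mass-conservation step $\int_0^{\infty}P_t(x,y)\,dy=1$ interchanges sum and integral over all of $\R_+$, which your local bounds on $\wn$ do not justify for $t\leq\Tab$; it is cleaner to get $P_t1=1$ directly from the conservativeness of the Feller semigroup in Theorem \ref{thm:main1}.
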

\begin{remark}
	\begin{enumerate}[1)]
		\item The phenomenon that the expansion in the full Hilbert space holds only for  $t$ bigger than a constant has been observed in the framework of Schr\"odinger operator, see \cite{Davies} and is natural for non-normal operators. Indeed, in such a case, the spectral projections $P_n f = \langle  f,\mathcal{R}_n \rangle_{\eab} \:   \mathcal{P}_{n} $  are not uniformly bounded as a sequence of operators.  The projections are not orthogonal anymore and the sequence of eigenfunctions does not form a basis of the Hilbert space.   These two facts illustrate  a fundamental difference with self-adjoint Markov semigroups, for which the spectral projections are orthogonal and uniformly bounded.
		\item In order to provide the convergence of the expansion \eqref{eq:exp} in the Hilbert space topology, we rely on the so-called synthesis operator as defined  in \eqref{eq:def_synt} below which  requires to characterize those $f$ and $t$ for which  the sequence $( e^{-nt}\langle  f,\mathcal{R}_n \rangle_{\eab}) \in \ell^2(\N)$. This is a difficult problem in general.  A  natural approach to verify this property is to resort to the Cauchy-Schwarz inequality which yields, thanks to the first bound stated in Proposition \ref{thm:bound}, to the description of $\Tab$, the smallest $t$ for which the expansion holds. From this perspective, we also manage to identify the Hilbert space $\lnubb$ for which the expansion is valid for all $t>0$, an approach which seems to be original in this context.
		\item
		Moreover, it is worth pointing out that the intertwining approach  enables to identify ${{\rm{Ran}}}(\Lambda_{\alpha,\beta})$ as another linear space for which the corresponding sequence is in $\ell^2(\N)$ for all $t>0$, a property which follows  directly without using  any bounds. In fact, we shall have the stronger statement  that for any $ f \in {{\rm{Ran}}}(\Lambda_{\alpha,\beta})$, $f= \sum_{n=0}^{\infty} \spnu{f,\mathcal{R}_n}{\eab} \mathcal{P}_n$. Finally, as we shall prove that $\mathcal{A} \subset{{\rm{Ran}}}(\Lambda_{\alpha,\beta})$ whereas for any $n\geq0$, $\mathcal{P}_n \notin \lnubb$,  we are lead to think that either our optimal Hilbert space may be improved or the Cauchy-Schwarz inequality provides weak estimate in our scenario. However, from the biorthogonality property \eqref{eq:def_bio},  we believe that the latter explanation is in force in this context.
		\item Finally, we shall prove in Lemma \ref{lem:sem} that there exists $(K_t)_{t\geq0}$ a  $1$-selfsimilar Feller semigroup on $\R_+$, i.e.~for any $c>0$, $K_{ct}f(cx)=K_t f\circ d_c(x)$ with $d_cf(x)=f(cx)$, such  that,  for any $t\geq0$, $P_t f(x) = K_{e^t -1} f \circ d_{e^{-t}}(x)$. Note that  $(K_t)_{t\geq0}$ belongs to the class of semigroups introduced by Lamperti \cite{Lamperti-72} which play a central theorem in limit theorems of stochastic processes, see \cite{Lamperti-62}. In particular, one obtains from \eqref{eq:exp_dervallr} that $(K_t)_{t\geq0}$ has an absolutely continuous kernel, $K_t(x,y)$ given, for any $t,y>0$, $x\geq0$, by
		\begin{eqnarray*}
			K_t(x,y) =\sum_{n=0}^{\infty}(1+t)^{-n-1} \wn\lb\frac{y}{1+t}\rb\mathcal{P}_{n}(x).
		\end{eqnarray*}
	\end{enumerate}
\end{remark}

The remaining part of the paper is organized as follows. In the next Section, we state several substantial results regarding properties of the sequence of (co)-eigenfunctions  which some of them may have independent interests. Section \ref{sec:prel} gathered some useful preliminaries results and sections \ref{sec:proof2} to  \ref{sec:proof1} contain the proof of the main results.  Note that Section \ref{sec:bound} which includes the proof of Proposition \ref{thm:bound} below presents several uniform asymptotic estimates of $|\wn(x)|$ which might also be of independent interests.

\section{Substantial auxiliary results}
We start by  stating several interesting properties that the sequences $(\mathcal{P}_n)$ and $(\mathcal{R}_n)$  satisfy. For this purpose, we introduce some concepts  borrowed from non-harmonic analysis which are nicely exposed  in the monographs \cite{Young} and \cite{Christensen-03}.  Two sequences  $(\mathcal{P}_n)$ and $(\mathcal{R}_n)$ are said to be biorthogonal  in $\lnua$ if for any $n,m \in \N$,
\begin{equation} \label{eq:def_bio}
\spnu{\mathcal{P}_n,\mathcal{R}_m}{\eab} = \delta_{nm}.
\end{equation}
Moreover, a sequence that admits a biorthogonal sequence will
be called \emph{minimal} and a sequence that is both minimal and complete, in the sense that its linear span is dense in $\lnua$, will be called \emph{exact}.  It is easy to show that a sequence $(\mathcal{P}_n)$  is minimal if and only if none of its elements can be approximated by linear combinations of the others. If this is the
case, then a biorthogonal sequence will be uniquely determined if and only if $(\mathcal{P}_n)$
is complete. We also say that $(\mathcal{P}_n)$ is a Riesz basis in $\lnua$ if there exists an isomorphism $\Lambda$ from $\Le$ onto $\lnua $ such that $\Lambda \mathcal{L}_n= \mathcal{P}_n$ for all $n$.

\begin{proposition}\label{prop:sequence}
	\begin{enumerate}[1)]
		\item For any $n\in \N$, $\mathcal{P}_n \in \lnua$ and $\mathcal{R}_n \in \lnua$.
	\item The sequences $(\mathcal{P}_n)$ and $(\mathcal{R}_n)$ are biorthogonal and exact in $\lnua$.
\item \label{item:Bessel} Finally the sequences   $(\mathcal{P}_n)$  is not a Riesz basis but it  satisfies the following Bessel inequality
	\begin{equation}
	\sum_{n=0}^{\infty}|\spnu{f,\mathcal{P}_n}{\eab}|^2 \leq ||f||_{\eab}, \quad \forall f \in \lnua.
	\end{equation}
\end{enumerate}
\end{proposition}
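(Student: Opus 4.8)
The plan is to establish the three assertions in turn, the first two by direct computation and the third through the intertwining operator $\Lambda_{\alpha,\beta}$.

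\textbf{Integrability (1) and biorthogonality in (2).} The measure $\eab$ has finite moments of every order: the substitution $u=x^{1/\alpha}$ turns \eqref{eq:def_e} into a Gamma density and gives $\int_0^\infty x^{k}\eab(x)\,dx=\Gamma(\alpha(\beta+k)+1)/\Gamma(\alpha\beta+1)$ for all $k\ge0$ (indeed for all real $k>-(\beta+\tfrac1\alpha)$, integrability at $0$ being ensured by $\beta+\tfrac1\alpha\ge1$); since $\eab$ moreover decays super-exponentially, every $x^{s}$ with $s\ge0$, hence $\mathcal{P}_n$, lies in $\lnua$. For $\mathcal{R}_n$ I would expand the Rodrigues formula: from $\frac{d}{dx}e^{-x^{1/\alpha}}=-\frac1\alpha x^{\frac1\alpha-1}e^{-x^{1/\alpha}}$, an induction on the order of differentiation gives $\frac{d^{n}}{dx^{n}}\big(x^{n+\beta+\frac1\alpha-1}e^{-x^{1/\alpha}}\big)=\big(\sum_{k=0}^{n}c_{n,k}\,x^{\beta+\frac1\alpha-1+k/\alpha}\big)e^{-x^{1/\alpha}}$ for constants $c_{n,k}$ (with $c_{n,n}=(-1/\alpha)^{n}$), so that $\mathcal{R}_n(x)=\frac{(-1)^n}{n!}\sum_{k=0}^{n}c_{n,k}x^{k/\alpha}$ is a polynomial of degree $n$ in $x^{1/\alpha}$; each $x^{k/\alpha}\in\lnua$ by the moment bound, whence $\mathcal{R}_n\in\lnua$. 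For \eqref{eq:def_bio} I would write $\langle\mathcal{P}_n,\mathcal{R}_m\rangle_{\eab}=\int_0^\infty\mathcal{P}_n(x)\mathcal{W}_m(x)\,dx=\frac{(-1)^m}{m!}\int_0^\infty\mathcal{P}_n(x)\,(x^{m}\eab(x))^{(m)}\,dx$ and integrate by parts $m$ times; all boundary terms vanish because $x^{m}\eab(x)$ and its first $m-1$ derivatives vanish at $0$ — all powers of $x$ occurring in them being $\ge\beta+\tfrac1\alpha\ge1$ — and at $+\infty$, by the factor $e^{-x^{1/\alpha}}$. One is left with $\frac1{m!}\int_0^\infty\mathcal{P}_n^{(m)}(x)\,x^{m}\eab(x)\,dx$, which is $0$ for $n<m$ since $\deg\mathcal{P}_n=n$; for $n\ge m$, inserting \eqref{eq:Pn} and the moments above, all $\Gamma$-factors cancel and the resulting sum collapses, via $\sum_{j}\binom{n-m}{j}(-1)^{j}=(1-1)^{n-m}$, to $\delta_{nm}$.

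\textbf{Exactness in (2).} Biorthogonality already gives that both $(\mathcal{P}_n)$ and $(\mathcal{R}_n)$ are minimal. Completeness of $(\mathcal{P}_n)$: since $\deg\mathcal{P}_n=n$ its span equals $\mathcal{A}$, and $\mathcal{A}$ is dense in $\lnua$ — because $\int_0^\infty e^{cx}\eab(x)\,dx<\infty$ for every $c>0$ (here $\tfrac1\alpha>1$), so any $f\perp\mathcal{A}$ in $\lnua$ makes $z\mapsto\int_0^\infty f(x)e^{zx}\eab(x)\,dx$ an entire function with all Taylor coefficients $0$, forcing $f=0$ $\eab$-a.e.; alternatively, $\mathcal{P}_n=\Lambda_{\alpha,\beta}\mathcal{L}_n$ (see below) and \refthm{thm:main2} give the same. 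Completeness of $(\mathcal{R}_n)$: by the computation above its span is $\mathrm{span}\{x^{k/\alpha}:k\ge0\}$, and the unitary $g(x)\mapsto g(u^{\alpha})$ identifies $\lnua$ with the $L^2$-space of a Gamma law, carrying $x^{k/\alpha}$ to $u^{k}$, so completeness reduces to the classical density of polynomials there. As both sequences are complete, the biorthogonal partner is moreover unique.

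\textbf{Bessel inequality and failure of the Riesz-basis property (3).} The key point is $\mathcal{P}_n=\Lambda_{\alpha,\beta}\mathcal{L}_n$ for all $n$, $\mathcal{L}_n$ being the order-$0$ Laguerre polynomial: $\Lambda_{\alpha,\beta}x^{k}=\big(\int_0^\infty y^{k}\lambda_{\alpha,\beta}(y)\,dy\big)x^{k}$, so $\Lambda_{\alpha,\beta}$ sends a degree-$n$ polynomial to a degree-$n$ polynomial; by \eqref{eq:inter} applied to the $Q_t$-eigenfunction $\mathcal{L}_n$, $\Lambda_{\alpha,\beta}\mathcal{L}_n$ is a degree-$n$ eigenfunction of $\mathbf{L}_{\alpha,\beta}$, and since $\mathbf{L}_{\alpha,\beta}$ acts triangularly on polynomials with distinct eigenvalues $0,-1,-2,\dots$ such an eigenfunction is unique up to a scalar, fixed to $1$ by $\Lambda_{\alpha,\beta}g(0)=g(0)$ and $\mathcal{L}_n(0)=\mathcal{P}_n(0)=1$. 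Now $(\mathcal{L}_n)$ is an orthonormal basis of $\Lg$ and $\Lambda_{\alpha,\beta}:\Lg\to\lnua$ is a contraction (\refthm{thm:main2}), so $(\mathcal{P}_n)$, being the image of an orthonormal basis under a contraction, is a Bessel sequence with bound $1$: for $f\in\lnua$, Parseval in $\Lg$ together with $\|\Lambda_{\alpha,\beta}^{*}\|=\|\Lambda_{\alpha,\beta}\|\le1$ gives $\sum_{n\ge0}|\langle f,\mathcal{P}_n\rangle_{\eab}|^{2}=\sum_{n\ge0}|\langle\Lambda_{\alpha,\beta}^{*}f,\mathcal{L}_n\rangle_{\e}|^{2}=\|\Lambda_{\alpha,\beta}^{*}f\|_{\e}\le\|f\|_{\eab}$, the asserted inequality. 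Finally, $(\mathcal{P}_n)$ is not a Riesz basis: otherwise the isomorphism $\Lambda$ in the definition would satisfy $\Lambda\mathcal{L}_n=\mathcal{P}_n=\Lambda_{\alpha,\beta}\mathcal{L}_n$, hence $\Lambda=\Lambda_{\alpha,\beta}$ by totality of $(\mathcal{L}_n)$, so $\Lambda_{\alpha,\beta}$ would be boundedly invertible — contradicting that it is not bounded from below; equivalently, a Riesz basis and its biorthogonal dual would both be uniformly norm-bounded, whereas $\|\mathcal{R}_n\|_{\eab}$ is unbounded (equivalently $\|\mathcal{P}_n\|_{\eab}\to0$) by the asymptotics of Proposition~\ref{thm:bound} and Section~\ref{sec:bound}.

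\textbf{Main obstacle.} The substance lies in part (3): spotting that $(\mathcal{P}_n)=\Lambda_{\alpha,\beta}(\mathcal{L}_n)$ is the image of an orthonormal basis under a Markov contraction — which makes the Bessel estimate immediate — and, for the failure of the Riesz-basis property, invoking that $\Lambda_{\alpha,\beta}$ is not bounded from below (equivalently that $\|\mathcal{R}_n\|_{\eab}$ grows), a genuinely delicate fact that rests on the later saddle-point asymptotics. The biorthogonality identity, though it needs the binomial sum above and some care with the boundary terms, is otherwise routine, and part (1) is essentially a computation.
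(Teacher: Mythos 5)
Your proposal is correct in outline and, for most of the statement, follows the same path as the paper: integrability of $\mathcal{P}_n$ and $\mathcal{R}_n$ from the finiteness of all moments of $\eab$ and from the fact that $\mathcal{R}_n(x)$ is a degree-$n$ polynomial in $x^{1/\alpha}$ (the paper's representation \eqref{eq:rep_pol}); completeness of $(\mathcal{R}_n)$ by exactly the change of variables $x\mapsto x^{\alpha}$ onto a Gamma-weighted ${\rm L}^2$ space that the paper uses; and the Bessel inequality by exactly the paper's observation that $(\mathcal{P}_n)=\Lambda_{\alpha,\beta}(\mathcal{L}_n)$ is the image of an orthonormal basis under a contraction, so that $\sum_n|\spnu{f,\mathcal{P}_n}{\eab}|^2=\|\Lambda^*_{\alpha,\beta}f\|^2_{\e}\le\|f\|^2_{\eab}$. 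Where you genuinely diverge is the biorthogonality: you compute $\spnu{\mathcal{P}_n,\mathcal{R}_m}{\eab}$ directly by $m$-fold integration by parts against the Rodrigues formula and a collapsing binomial sum, whereas the paper never integrates by parts --- it writes $\spnu{\mathcal{P}_n,\mathcal{R}_m}{\eab}=\spnu{\Lambda_{\alpha,\beta}\mathcal{L}_n,\mathcal{R}_m}{\eab}=\spnu{\mathcal{L}_n,\Lambda^*_{\alpha,\beta}\mathcal{R}_m}{\e}=\delta_{nm}$, using the identity $\Lambda^*_{\alpha,\beta}\mathcal{R}_m=\mathcal{L}_m$ of Lemma \ref{lem:co-eigen} (itself obtained by a Mellin-convolution argument). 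Your route is more elementary and self-contained; its price is the bookkeeping: after the $m$ integrations by parts the prefactor $(-1)^m$ from the Rodrigues operator survives, and the collapsed sum is $(-1)^m\binom{n}{m}(1-1)^{n-m}$ multiplied by a ratio of Gamma factors and the moment $\int_0^\infty x^n\eab(x)\,dx$, so the diagonal value must be checked to equal $1$ rather than asserted to "cancel to $\delta_{nm}$"; you should track the signs and the normalizing constant of $\eab$ explicitly. The paper's route buys immunity from this bookkeeping at the cost of relying on the nontrivial adjoint identity.

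The one genuine gap is in the "not a Riesz basis" part. Your reduction --- a Riesz basis would force $\Lambda_{\alpha,\beta}$ to be an isomorphism, hence bounded from below --- is exactly the paper's (via the open mapping theorem), but you never actually prove that $\Lambda_{\alpha,\beta}$ fails to be bounded from below. The reference you give does not supply it: Proposition \ref{thm:bound} provides only the upper bound $\|\mathcal{R}_n\|_{\eab}=\bo{e^{T_\alpha n}}$, which cannot show that $\|\mathcal{R}_n\|_{\eab}$ is unbounded, nor that $\|\mathcal{P}_n\|_{\eab}\to0$. The missing step is short but must be done: the paper evaluates the ratio on monomials, using \eqref{eq:poly_kern},
\[
\frac{\|\Lambda_{\alpha,\beta}\,p_n\|_{\eab}}{\|p_n\|_{\e}}=\frac{\Gamma(n+1)\,\Gamma^{\frac12}(2\alpha n+\alpha\beta+1)}{\Gamma(\alpha n+\alpha\beta+1)\,\Gamma^{\frac12}(2n+1)}\sim e^{-n(1-\alpha)\ln 2}\longrightarrow 0
\]
by Stirling, and this is what actually rules out boundedness from below. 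Without this computation (or an equivalent lower bound on $\|\mathcal{R}_n\|_{\eab}$), part (3) of your argument is incomplete.
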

 An interesting consequence of \ref{item:Bessel}) is the fact that the  synthesis operator $S$ defined by
 \begin{equation} \label{eq:def_synt}
 S(l_n) = \sum_{n\geq0} l_n \mathcal{P}_n
 \end{equation}
is bounded from  $ \ell^2(\N)$ into  $\lnua$ with  $||S(l_n)||^2_{\eab}\leq\sum_{n\geq0} l^2_n $, and, for such a sequence, the series converges unconditionally. Although this information is very helpful for our purpose, one still needs estimates for large $n$ of  $||\mathcal{R}_n||_{\eab}$, $|\mathcal{R}_n(x)|$ and  $|\mathcal{P}_n(x)|$  in order to derive the convergence properties of the eigenvalue expansions in the appropriate topology.
We state the following bounds for the two latter quantities.
\begin{proposition} \label{prop:crude_bound}
\begin{enumerate}
	\item
 Writing $\mathfrak{t}_{\alpha}=(\alpha+1)\alpha^{-\ratio{\alpha}}$,
we have for any $x\in \R$, any integer $p$, and, $n$ large
\begin{equation} \label{eq:asympt_polyn}
|\mathcal{P}^{(p)}_{n}(x)| ={\rm{O}}\lb n^{p+\frac12} e^{\mathfrak{t}_{\alpha}(n|x|)^{\frac{1}{1+\alpha}}} \rb.
\end{equation}
\item  Writing  $\bar{\mathfrak{t}}_{\alpha}=\mathfrak{t}_\alpha\lb \frac{\alpha+1}{\alpha}+\epsilon\rb^{\frac{1}{\alpha+1}}$, for some small $\epsilon>0$, we have, for any $0<x< e^{-2\alpha} \lbrb{\ratio{\alpha}}^{\alpha}n^\alpha$, any integer $q$,  and  large $n$
\begin{eqnarray}\label{eq:crude_bound}
\left|\mathcal{W}^{(q)}_n\lb x \rb \right| =\bo{x^{\beta + \frac{1}{\alpha}-q}n^{\labs\beta + \frac{1}{\alpha}-1-q\rabs+2} e^{\bar{\mathfrak{t}}_{\alpha} \lbrb{nx}^{\frac{1}{\alpha+1}}}}.
\end{eqnarray}
\end{enumerate}	
\end{proposition}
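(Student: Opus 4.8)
The plan is to obtain both estimates from explicit integral (saddle-point) representations of $\mathcal{P}_n$ and $\mathcal{W}_n=\mathcal{R}_n\eab$, exploiting the fact that their coefficients involve reciprocals of Gamma values $\Gamma(\alpha k+\alpha\beta+1)$. First I would record a contour-integral formula for $\mathcal{P}_n$. Starting from \eqref{eq:Pn} and the Hankel representation $\frac1{\Gamma(\alpha k+\alpha\beta+1)}=\frac1{2\pi\i}\oint e^{w}w^{-\alpha k-\alpha\beta-1}dw$ over a Hankel contour, one can interchange sum and integral (the sum over $k$ is finite) to get
\begin{equation*}
\mathcal{P}_n(x)=\frac{\Gamma(\alpha\beta+1)}{2\pi\i}\oint e^{w}w^{-\alpha\beta-1}\lb 1-\frac{x}{w^{\alpha}}\rb^{n}dw .
\end{equation*}
Differentiating $p$ times in $x$ is harmless (it pulls out $n^{\underline p}$ and lowers the power of the binomial), so it suffices to treat $p=0$ and then note the $O(n^p)$ loss. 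Writing the integrand as $e^{n\varphi(w)}$ with $\varphi(w)=n^{-1}w+\log(1-x w^{-\alpha})$ (absorbing the lower-order algebraic factors), the relevant saddle solves $\varphi'(w)=0$, i.e.\ balances $w$ against $\alpha x w^{-\alpha}/(w^\alpha-x)$; a scaling $w\asymp (n|x|)^{1/(1+\alpha)}$ makes the two competing terms of comparable size, and a routine steepest-descent evaluation produces the factor $e^{\mathfrak{t}_\alpha (n|x|)^{1/(1+\alpha)}}$ with the constant $\mathfrak t_\alpha=(\alpha+1)\alpha^{-\alpha/(1+\alpha)}$ emerging as the value of the exponent at the saddle. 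The remaining $n^{1/2}$ is the usual Gaussian-width contribution from the quadratic term at the saddle, and the prefactor $w^{-\alpha\beta-1}$ only affects the implied constant. I would need to be slightly careful about the case $x<0$ and $x$ small, where the saddle degenerates; there one can either deform to a circle of radius $O(1)$ and bound crudely (the estimate is then dominated by the $n^{p+1/2}$ term, which is consistent with the claim), or note that the bound is uniform in $x\in\R$ precisely because $\mathfrak t_\alpha(n|x|)^{1/(1+\alpha)}\to0$ as $x\to0$.

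For part (2), the strategy is the same but applied to $\mathcal{W}_n$. Here I would first derive, from the Rodrigues-type definition $\mathcal{R}_n\eab=\frac{(-1)^n}{n!\eab}(x^n\eab)^{(n)}$ together with \eqref{eq:def_e}, an explicit power-series or contour representation for $\mathcal{W}_n(x)=\mathcal{R}_n(x)\eab(x)$; the natural outcome, paralleling the Laguerre case $\alpha=1$, is
\begin{equation*}
\mathcal{W}_n(x)=\frac{x^{\beta+\frac1\alpha-1}}{\Gamma(\alpha\beta+1)}\cdot\frac{1}{2\pi\i}\oint e^{-u^{1/\alpha}}\,u^{\,\text{(shift)}}\lb 1-\frac{x}{u}\rb^{n}\,du\cdot u^{-1}
\end{equation*}
(the precise exponents to be pinned down by matching the $\alpha=1$ Rodrigues formula). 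Pulling out the expected algebraic factor $x^{\beta+\frac1\alpha-q}n^{|\beta+\frac1\alpha-1-q|+2}$ — the extra $+2$ over the naive count absorbing both the $q$ differentiations and a worst-case Gamma-ratio estimate — reduces matters to bounding an integral of the form $\oint e^{-u^{1/\alpha}}(1-x/u)^n\,du$. Substituting $u=v^\alpha$ turns the exponential into $e^{-v}$ and the binomial into $(1-x v^{-\alpha})^n$; the saddle now balances $v$ against $n\alpha x v^{-\alpha-1}$, giving $v\asymp(nx)^{1/(1+\alpha)}$ up to the constant $((\alpha+1)/\alpha)^{1/(1+\alpha)}$, and steepest descent yields $e^{\bar{\mathfrak t}_\alpha(nx)^{1/(\alpha+1)}}$ with $\bar{\mathfrak t}_\alpha=\mathfrak t_\alpha((\alpha+1)/\alpha+\epsilon)^{1/(\alpha+1)}$; the small $\epsilon>0$ is exactly the slack one allows in the contour radius to keep the estimate uniform over the stated range $0<x<e^{-2\alpha}(\alpha/(\alpha+1))^\alpha n^\alpha$, which is precisely the region where the saddle lies on the chosen contour and does not collide with the singularity at $u=x$.

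The main obstacle is the last point: making the saddle-point evaluation genuinely uniform in $x$ over the whole stated range rather than pointwise. Near the upper end $x\sim c\,n^\alpha$ the saddle $v\asymp(nx)^{1/(1+\alpha)}$ approaches the branch point coming from $(1-xv^{-\alpha})^n$, so the Gaussian approximation at the saddle is no longer innocent and one must either keep a quantitative handle on the distance saddle-to-singularity (this is where the explicit constant $e^{-2\alpha}(\alpha/(\alpha+1))^\alpha$ is forced) or split the contour into an arc through the saddle plus a controlled remainder. I would handle this by choosing the integration contour to be a circle of radius $r_n\asymp(nx)^{1/(1+\alpha)}$ passing through the saddle, estimating the arc near the saddle by the standard second-order expansion and the complementary arc by the trivial bound $|1-xv^{-\alpha}|^n\le(1+xr_n^{-\alpha})^n$ combined with $|e^{-v}|\le e^{-r_n\cos\theta}$, and checking that the contribution of the latter is of strictly smaller exponential order throughout the admissible range — this is where the extra $\epsilon$ in $\bar{\mathfrak t}_\alpha$ is spent. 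The differentiation parameters $p,q$ and the polynomial-in-$n$ prefactors are bookkeeping that does not affect the exponential rate, so I would treat them last, by differentiating under the integral sign and absorbing each derivative into an extra factor $O(n)$ (for $\mathcal{P}_n$) or into shifts of the Gamma exponents (for $\mathcal{W}_n$).
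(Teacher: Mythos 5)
Your part (1) is workable and, despite the different packaging, rests on the same computation as the paper's proof: the paper identifies $(\mathcal{P}_n)$ as the Jensen polynomials of the entire function $\mathcal{I}_{\alpha,\beta}(z)=\Gamma(\alpha\beta+1)\sum_{k}z^k/(\Gamma(\alpha k+\alpha\beta+1)k!)$, writes $\mathcal{P}_n(-x)$ as the Cauchy coefficient integral of $e^{z/x}\mathcal{I}_{\alpha,\beta}(z)$ over the circle $|z|=nx$, and invokes the order $\tfrac{1}{\alpha+1}$ and type $\mathfrak{t}_\alpha$ of $\mathcal{I}_{\alpha,\beta}$; your optimization $\min_r\,(r+n|x|\,r^{-\alpha})$ is precisely the computation of that type, and your reduction of the $p$-th derivative to a factor $n^p$ matches the paper's identity \eqref{eq:der_pol}. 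The clean way to dispose of your worry about $x>0$ is the inequality $|\mathcal{P}_n(x)|\le \mathcal{P}_n(-|x|)$, which holds because the coefficients alternate in sign; no saddle degeneracy needs to be discussed.

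Part (2) has a genuine gap. First, the representation you posit is not what the Rodrigues formula produces: Cauchy's formula for the $n$-th derivative gives $\mathcal{W}_n(x)=\frac{(-1)^n}{\Gamma(\alpha\beta+1)}\frac{1}{2\pi i}\oint u^{n+\beta_\alpha}e^{-u^{1/\alpha}}(u-x)^{-n-1}\,du$ with $\beta_\alpha=\beta+\tfrac1\alpha-1$, so the binomial enters as $(1-x/u)^{-n}$, not $(1-x/u)^{n}$. More seriously, the circular-contour saddle scheme cannot be run on this integral: the contour must wind once around the pole at $u=x$ yet must not enclose the branch point of $u^{\beta_\alpha}e^{-u^{1/\alpha}}$ at $u=0$, while the radius at which the competing factors balance is of order $(nx)^{\alpha/(\alpha+1)}$, which exceeds $x$ throughout the entire stated range $x<cn^{\alpha}$; a circle about $x$ of radius smaller than $x$ yields only the useless bound of order $2^{n}$. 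Thus the true obstruction is the branch point at the origin, not ``the singularity at $u=x$,'' and your explanation of the constant $e^{-2\alpha}(\alpha/(\alpha+1))^{\alpha}$ is not what is actually happening. The paper avoids contours here altogether: it differentiates the Wright-type series \eqref{eq:wn_wright} term by term, splits the sum at $k=n$, bounds the head by the order-and-type estimate for $\sum_k (Knx)^{k/\alpha}/(\Gamma(k/\alpha+\beta^q_\alpha+1)k!)$ with $K\to(\alpha+1)/\alpha$ (this is the source of $\bar{\mathfrak{t}}_\alpha$), and shows the tail $k\ge n$ is $O(1)$ by Stirling; it is the summability of that tail, via a factor $(e\kappa)^k$ with $\kappa<e^{-2}\alpha/(\alpha+1)$, that forces the restriction $x<e^{-2\alpha}(\alpha/(\alpha+1))^{\alpha}n^{\alpha}$. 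Genuine contour/saddle-point analysis of $\mathcal{W}_n$ does appear in the paper, but through the Mellin--Barnes representation \eqref{eq:MellinInv_r} in Section 6, and it requires far more machinery than your sketch budgets for.
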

Next, we recall that when $\alpha=1$, i.e.~$\mathcal{R}_n$ is simply the classical Laguerre polynomials, one uses the following simple observation to compute their norms, see e.g.~\cite{Szego},
\begin{eqnarray*}
	||\mathcal{L}_n^{(\beta)}||^2_{\eb} &=& \frac{1}{\Gamma(\beta +1)}  \int_0^{\infty}(\mathcal{L}_n^{(\beta)}(x))^2x^{\beta} e^{-x}dx = \frac{(-1)^n}{n!} \int_0^{\infty}\mathcal{L}_n^{(\beta)}(x) (x^{n}\eb(x))^{(n)}dx \\
	&=&\frac{1}{n!} \int_0^{\infty}x^{n}\eb(x)dx =\frac{\Gamma(n+\beta +1)}{n!\Gamma(\beta  +1)}.
\end{eqnarray*}
Unfortunately, it is easy to check that for $\alpha \in (0,1)$, this integration by parts device does not apply.  Instead, we must develop a two-steps optimization analysis to derive the estimates of the norms. First, we carry out  delicate saddle point approximations  to obtain several uniform bounds for $|\mathcal{R}_n(x)|$ depending on the range of $xn^{-\alpha}$, and, refer to Proposition \ref{thm:Bounds} for their statements. In this vein, we mention that the study of uniform asymptotic expansions of  the Laguerre polynomials has  quite a long history, see e.g.~\cite{Frenzen-Wong}, \cite{Olver_W} and also \cite{Szego} and \cite{Temme} for a complete description of this study. Then, combining these bounds with additional estimates, we must implement   a suboptimal procedure in order to get an explicit representation of the bound of their $\lnua$-norm. Moreover, although for most of the ranges one may obtain bounds of the form  $\bo{e^{\epsilon n}}$ for any $\epsilon>0$, for larger Hilbert spaces than $\lnubb$, it turns out that on the range $x \in (\epsilon n^{\alpha}, \bca n^{\alpha})$ for some constant $\bca$ defined in Proposition \ref{thm:Bounds},  $\lnubb$ is the optimal Hilbert space.
 From our analysis, we obtain the following estimates.
\begin{proposition}\label{thm:bound}
	We have for large $n$,
	\begin{equation}
	\label{eq:bound_norm}
	||\mathcal{R}_n||_{\eab} = \bo{e^{ \Tab n}},
	\end{equation}
	and  
	\begin{equation}
	\label{eq:bound_norm_new}
	\left|\left|\Rn\frac{\eab}{\ebb}\right|\right|_{\ebb} = \bo{n^{1+\beta+\frac1\alpha+\alpha}e^{\bar{\mathfrak{t}}_{\alpha}n^{\frac{1}{\alpha+1}}}}.
	\end{equation}
\end{proposition}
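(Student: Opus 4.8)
The plan is to reduce both bounds to uniform pointwise estimates of $|\mathcal{R}_n(x)|$ (equivalently $|\mathcal{W}_n(x)| = |\mathcal{R}_n(x)|\eab(x)$) that are established by the saddle point analysis of Section~\ref{sec:bound}, and then perform the two-step ``suboptimal'' optimization mentioned in the text. Concretely, I would split the half-line $\R_+$ into a few ranges governed by the size of $xn^{-\alpha}$: a small range $x \in (0,\epsilon n^{\alpha})$, a bulk range $x \in (\epsilon n^\alpha, \bca n^\alpha)$, and a large range $x > \bca n^{\alpha}$ (with possibly a further subdivision near the ``turning point'' $x \asymp \bca n^\alpha$ where Airy-type behaviour appears). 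On each range one has, from Proposition~\ref{thm:Bounds}, a bound of the form $|\mathcal{R}_n(x)| = \bo{n^{a} x^{b} e^{h(x,n)}}$ for explicit $a,b$ and an explicit exponential rate $h$; the norm $\|\mathcal{R}_n\|_{\eab}^2 = \int_0^\infty \mathcal{R}_n^2(x)\,\eab(x)\,dx$ is then estimated by inserting these bounds and carrying out, on each range, a Laplace-type evaluation of $\int \mathcal{R}_n^2(x)\,\eab(x)\,dx$, i.e. locating the maximum of $2h(x,n) - x^{1/\alpha} + (\beta + \tfrac1\alpha - 1)\ln x$ over $x$ in that range.

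For the first bound \eqref{eq:bound_norm}, the key point is that the dominant contribution comes from the bulk range, and that the maximum over $x$ of the relevant exponent is $2\Tab n + o(n)$, with $\Tab = -\ln(2^\alpha - 1)$. I expect the constant $2^\alpha - 1$ to arise naturally: the saddle point equation for $\mathcal{R}_n$ combined with the Gaussian-type weight $e^{-x^{1/\alpha}}$ produces, after the change of variables $x = un^\alpha$, an optimization problem in $u$ whose optimal value is exactly $-\ln(2^\alpha-1)$ per unit of $n$. One then checks that the small range and the large range contribute strictly less (the large range is controlled because the weight $\eab$ decays like $e^{-x^{1/\alpha}}$, which eventually dominates the subexponential-in-$x$ growth $e^{\mathfrak{t}_\alpha(n x)^{1/(1+\alpha)}}$ of $\mathcal{R}_n$ from \eqref{eq:asympt_polyn}), so the full integral is $\bo{e^{2\Tab n}}$, and taking square roots gives \eqref{eq:bound_norm}. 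The polynomial prefactors collapse into the $\bo{\cdot}$ since they are at most $e^{\epsilon n}$.

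For the second bound \eqref{eq:bound_norm_new}, the setup is the same but the weight changes: we now integrate against $\frac{\eab^2(x)}{\ebb(x)}$, and since $\ebb(x) = x^{\beta + \frac1\alpha - 1}e^{\ew x^{1/\gamma}}$ with $0 < \gamma < \alpha$, the factor $\eab^2/\ebb$ has the form (up to constants) $x^{\beta + \frac1\alpha - 1}e^{-2x^{1/\alpha} - \ew x^{1/\gamma}}$, whose extra $e^{-\ew x^{1/\gamma}}$ decay, being of a \emph{larger} power of $x$ in the exponent than $x^{1/\alpha}$, kills the growth of $\mathcal{R}_n$ on the large range much more efficiently. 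Here the decisive range is the small/bulk one $x \in (0, e^{-2\alpha}(\ratio{\alpha})^\alpha n^\alpha)$, where \eqref{eq:crude_bound} gives $|\mathcal{W}_n(x)| = \bo{x^{\beta + \frac1\alpha}n^{|\beta + \frac1\alpha - 1| + 2}e^{\bar{\mathfrak{t}}_\alpha (nx)^{1/(\alpha+1)}}}$; substituting into $\|\Rn \eab/\ebb\|_{\ebb}^2 = \int_0^\infty \mathcal{W}_n^2(x)/\ebb(x)\,dx$ and optimizing the exponent $2\bar{\mathfrak{t}}_\alpha (nx)^{1/(\alpha+1)} - \ew x^{1/\gamma} - \text{(log terms)}$ over $x$ — a genuine Laplace integral — yields a maximum of order $n^{1/(\alpha+1)}$ (the $x^{1/\gamma}$ term forces the optimal $x$ to stay bounded, so only the $n^{1/(\alpha+1)}$ from $\bar{\mathfrak{t}}_\alpha(nx)^{1/(\alpha+1)}$ survives), together with a polynomial factor which, after tracking through the Laplace evaluation (half-width $\asymp$ some negative power of $n$ times the polynomial prefactor squared and the Gaussian normalization), produces precisely $n^{1+\beta+\frac1\alpha+\alpha}$; taking square roots gives \eqref{eq:bound_norm_new}.

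The main obstacle, as the authors themselves flag, is the bookkeeping of the uniform pointwise bounds across all ranges — in particular making sure the estimates glue at the transition points (the turning point $x \asymp \bca n^\alpha$ and the endpoints of the small range) without losing exponential order, and that the ``suboptimal'' polynomial powers one carries are consistent on overlaps. A secondary technical difficulty is the Laplace/Watson evaluation on the bulk range for \eqref{eq:bound_norm}: one must verify that the maximizer of the exponent lies in the interior of the admissible range (so that the saddle point bound for $\mathcal{R}_n$ is actually the applicable one there) and that the second-order term is nondegenerate, so the Gaussian-width contribution is only polynomial and absorbed by $\bo{\cdot}$. I would organize the proof as: (i) recall the range decomposition and the relevant cases of Proposition~\ref{thm:Bounds}; (ii) for \eqref{eq:bound_norm}, bound $\|\mathcal{R}_n\|_{\eab}^2$ range by range, show the bulk dominates with exponent $2\Tab n(1+o(1))$, and conclude; (iii) for \eqref{eq:bound_norm_new}, repeat with the weight $\eab^2/\ebb$, show the large range is negligible thanks to the $e^{-\ew x^{1/\gamma}}$ factor, perform the Laplace evaluation on the remaining range to extract the exact power $n^{1+\beta+\frac1\alpha+\alpha}$ and the rate $\bar{\mathfrak{t}}_\alpha n^{1/(\alpha+1)}$, and conclude.
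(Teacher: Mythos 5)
Your overall architecture is the paper's: split $\R_+$ into ranges governed by $xn^{-\alpha}$, insert the uniform pointwise bounds on $|\wn|$ from Proposition~\ref{thm:Bounds}, and estimate $||\mathcal{R}_n||^2_{\eab}=\int_0^\infty \wn^2(x)\eab^{-1}(x)\,dx$ range by range. But there is a genuine gap in your treatment of \eqref{eq:bound_norm}, located exactly where the constant $\Tab=-\ln(2^\alpha-1)$ is born. You assert that the bulk range dominates and that the tail $x\gtrsim n^\alpha$ is harmless because $\eab$ decays like $e^{-x^{1/\alpha}}$ while $\mathcal{R}_n$ only grows like $e^{\mathfrak{t}_\alpha(nx)^{1/(1+\alpha)}}$ by \eqref{eq:asympt_polyn}. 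That bound, however, is for the eigenfunctions $\mathcal{P}_n$, not for the co-eigenfunctions $\mathcal{R}_n$: by \eqref{eq:rep_pol}, $\mathcal{R}_n(x)$ is a degree-$n$ polynomial in $x^{1/\alpha}$, so $\mathcal{R}_n^2(x)\eab(x)\asymp c_n^2\,x^{2n/\alpha+\beta+\frac1\alpha-1}e^{-x^{1/\alpha}}$ at infinity and the tail integral is a priori of factorial size in $n$; moreover \eqref{eq:crude_bound} is only valid for $x< e^{-2\alpha}(\ratio{\alpha})^\alpha n^\alpha$ and gives you nothing on the tail. The only usable control there is \eqref{eq:MellinLargeY}, $|\wn(x)|=\bo{x^{\beta_\alpha}e^{-\eta x^{1/\alpha}}e^{nH_{\alpha,\eta}}}$, and to make $\wn^2(x)e^{x^{1/\alpha}}$ integrable over $(\baa n^\alpha,\infty)$ you are forced to take the pointwise decay rate $\eta>\tfrac12$; the price is the factor $e^{2nH_{\alpha,\eta}}$ with, by \eqref{eq:Haeta}, $H_{\alpha,\eta}=-\ln(\eta^{-\alpha}-1)\downarrow -\ln(2^\alpha-1)=\Tab$ as $\eta\downarrow\tfrac12$. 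In the paper it is precisely this tail range that produces the rate $e^{2\Tab n}$; the small and middle ranges give the rates $-\ln\sin\lbrb{\tfrac{(\alpha-\epsilon)\pi}{2}}+\tfrac12(\alpha+1)^{\frac{\alpha+1}{\alpha}}-(\alpha+1)$ (from \eqref{eq:EstimateTvNU}) and $-\ln\alpha+\tfrac12(\alpha+1)^{\frac{\alpha+1}{\alpha}}-(\alpha+1)$ (from \eqref{eq:middleX}), and the final step of the proof is the elementary verification that both are $\leq\Tab$. Without the $\eta\downarrow\tfrac12$ trade-off on the tail you cannot produce $2^\alpha-1$ at all. A smaller point: the cut between the small and middle ranges cannot be an arbitrary $\epsilon n^\alpha$; the paper places it at $\mathcal{B}_\alpha n^\alpha$ and needs Lemma~\ref{lem:b} ($\mathcal{B}_\alpha>\bca$) so that \eqref{eq:middleX} is actually applicable on the middle piece.

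For \eqref{eq:bound_norm_new} your plan does match the paper's: \eqref{eq:crude_bound} on $(0,\overline{K}_\alpha n^\alpha)$, and for $x\gtrsim n^\alpha$ the factor $e^{-\ew x^{1/\gamma}}$ with $\alpha/\gamma>1$ is superexponential in $n$ and swamps the $e^{Cn}$ factors coming from \eqref{eq:EstimateTvNU} and from \eqref{eq:MellinLargeY} with $\eta=0$. Be careful, though, with your parenthetical claim that the $x^{1/\gamma}$ term forces the maximizer of $2\bar{\mathfrak{t}}_\alpha(nx)^{1/(\alpha+1)}-\ew x^{1/\gamma}$ to stay bounded: the unconstrained critical point sits at $x\asymp n^{\gamma/(\alpha+1-\gamma)}\to\infty$, where the exponent is of order $n^{1/(\alpha+1-\gamma)}\gg n^{1/(\alpha+1)}$, so a naive stationary-point evaluation does not return the claimed rate. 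The paper instead rescales $x=n^\alpha v$ and argues monotonicity of the resulting exponent on the relevant $v$-interval to localize the maximum at the left endpoint $x=\bo{1}$; this localization step is where both the power $n^{1+\beta+\frac1\alpha+\alpha}$ and the rate $\bar{\mathfrak{t}}_\alpha n^{1/(\alpha+1)}$ are extracted, and it is the step your sketch would need to make rigorous.
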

\section{Some preliminary results} \label{sec:prel}
\subsection{Some useful facts around the gamma function}
Let us write,
for any  $\alpha \in (0,1]$ and $\beta\geq 1-\frac{1}{\alpha}$, and $\Re(s)>-\beta-\frac{1}{\alpha}$,
\begin{equation}
\label{eq:def_phir}
\Phi_{\alpha,\beta}(s)=
\frac{\Gamma(\alpha s + \alpha \beta +1)}{\Gamma(\alpha s +\alpha \beta +1-\alpha)}.
\end{equation}
In the following we collect some basic results which will be useful throughout the rest of the paper.
\begin{lemma}
\begin{enumerate}
	\item For any  $\alpha \in (0,1]$ and $\beta\geq 1-\frac{1}{\alpha}$, and $k\geq 1$, we have
		\begin{equation}
		\label{eq:poly}
		\frac{\sin(\alpha \pi)}{\pi}k(k-1)\int_0^1 y^{k-2} g_{\alpha,\beta}(y) dy = k\Phi_{\alpha,\beta}(k)-k\:{\rm{d}}_{\alpha,\beta}.
		\end{equation}
	\item For any $\Re(s)>-\beta-\frac{1}{\alpha}$, the functional equation
	\begin{equation*}
	\frac{\Gamma(\alpha s+\alpha\beta+1)}{\Gamma(\alpha  \beta+1)}=\Phi_{\alpha,\beta}(s)\frac{\Gamma(\alpha s+\alpha \beta+1-\alpha)}{\Gamma(\alpha  \beta+1)}
	\end{equation*}
	holds.
	\item Finally, we have, for large $|b|$ and $|\arg(a+ib)|<\pi$, the following well-known classical asymptotic estimates 
	\begin{eqnarray}\label{eqn:RefinedGamma1}
		|\Gamma(a+ib)|&=& Ce^{-a}e^{a\ln{|a+ib|}}e^{-b \arg(a+ib)}|a+ib|^{-\frac{1}{2}}(1+o(1)) ,\\
	|\Gamma(a+ib)|&\sim& C_a |b|^{a-\frac{1}{2}}e^{-\frac{\pi}{2}|b|}, \label{eq:est_gamma}
	\end{eqnarray}
	where $C,C_a>0$.
\end{enumerate}	

\end{lemma}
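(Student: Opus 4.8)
The statement to prove consists of the three items of the displayed lemma, but in the spirit of the request I focus on the final item, the classical asymptotic estimates \eqref{eqn:RefinedGamma1} and \eqref{eq:est_gamma}, since items (1) and (2) are elementary consequences of the functional equation $\Gamma(z+1)=z\Gamma(z)$ applied to \eqref{eq:def_phir}, together with the Euler--type integral computation $\int_0^1 y^{k-2}g_{\alpha,\beta}(y)\,dy$ evaluated via the hypergeometric series defining $g_{\alpha,\beta}$ and a Beta-function identity. The plan for those two items is simply: for (1), expand $g_{\alpha,\beta}$ in its ${}_2F_1$ series, integrate term by term against $y^{k-2}$, recognize the resulting series as a ratio of gamma functions, and match it with $k\Phi_{\alpha,\beta}(k)-k\,{\rm d}_{\alpha,\beta}$ using $\Phi_{\alpha,\beta}(k)-{\rm d}_{\alpha,\beta} = \frac{\Gamma(\alpha k+\alpha\beta+1)}{\Gamma(\alpha k+\alpha\beta+1-\alpha)}-\frac{\Gamma(\alpha\beta+\alpha+1)}{\Gamma(\alpha\beta+1)}$; for (2), this is immediate from \eqref{eq:def_phir} after multiplying numerator and denominator by $1/\Gamma(\alpha\beta+1)$.

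For the last item, the plan is to start from Stirling's formula in the complex domain: for $|\arg(z)|<\pi$ and $|z|\to\infty$,
\begin{equation}\label{eq:stirling}
\ln\Gamma(z) = \lb z-\tfrac12\rb\ln z - z + \tfrac12\ln(2\pi) + o(1).
\end{equation}
I would set $z=a+ib$ and take real parts. Writing $\ln z = \ln|z| + i\arg(z)$, the real part of $\lb z-\tfrac12\rb\ln z$ is $\lb a-\tfrac12\rb\ln|a+ib| - b\arg(a+ib)$, and the real part of $-z$ is $-a$. Exponentiating gives exactly
\[
|\Gamma(a+ib)| = C\,e^{-a}\,e^{\lb a-\frac12\rb\ln|a+ib|}\,e^{-b\arg(a+ib)}\,(1+o(1)),
\]
with $C=\sqrt{2\pi}$, which is \eqref{eqn:RefinedGamma1} after absorbing $e^{-\frac12\ln|a+ib|}=|a+ib|^{-1/2}$ (the displayed form writes $e^{a\ln|a+ib|}|a+ib|^{-1/2}$, the same thing). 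For \eqref{eq:est_gamma} one specializes to $a$ fixed and $|b|\to\infty$: then $\ln|a+ib| = \ln|b| + O(b^{-2})$, so $e^{(a-\frac12)\ln|a+ib|}\sim |b|^{a-\frac12}$; and $\arg(a+ib)\to \operatorname{sgn}(b)\,\frac{\pi}{2}$, more precisely $\arg(a+ib)=\operatorname{sgn}(b)\frac{\pi}{2} - \frac{a}{b}+O(b^{-2})$, whence $e^{-b\arg(a+ib)} = e^{-\frac{\pi}{2}|b|}e^{a}(1+O(b^{-1}))$, and the factors $e^{-a}$ and $e^{a}$ cancel into the constant $C_a$. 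Collecting the pieces yields $|\Gamma(a+ib)|\sim C_a|b|^{a-\frac12}e^{-\frac{\pi}{2}|b|}$.

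There is no real obstacle here: both estimates are standard and the only care needed is bookkeeping of the argument and modulus expansions in the regime $a$ bounded, $|b|\to\infty$, and making sure the $o(1)$ in \eqref{eq:stirling} is uniform on the relevant sector $|\arg(a+ib)|<\pi$ — which it is, by the classical statement of Stirling's formula. If one wanted a self-contained derivation of \eqref{eq:stirling} itself, the cleanest route is the Binet integral representation of $\ln\Gamma$, but since the lemma explicitly calls these estimates ``well-known classical'' it suffices to cite a standard reference (e.g.\ \cite{Temme}) and carry out the elementary real-part extraction above. The mildly delicate point, if any, is simply ensuring the constant $C_a$ is correctly identified as $\sqrt{2\pi}\,e^{a}\cdot$(lower-order corrections) — but since the statement only asserts existence of some $C_a>0$, this is not needed.
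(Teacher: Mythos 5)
Your treatment of items (2) and (3) is correct and coincides with the paper's own route: item (2) is immediate from the definition \eqref{eq:def_phir}, and for item (3) the paper likewise just takes moduli in the complex Stirling formula $|\Gamma(z)|=C|e^{-z}||z^z||z|^{-1/2}(1+o(1))$ on $|\arg z|<\pi$ (citing \cite[(2.1.8)]{Paris01}); your explicit real-part extraction and the expansion $\arg(a+ib)=\mathrm{sgn}(b)\tfrac{\pi}{2}-\tfrac{a}{b}+O(b^{-3})$ is exactly the step the paper describes as ``readily deduced.''

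The issue is item (1), which you set aside as elementary but which is the substantive part of the lemma, and your sketch for it has a genuine gap. Expanding the ${}_2F_1$ and integrating term by term gives
\[
\int_0^1 y^{k-2}g_{\alpha,\beta}(y)\,dy=\alpha\Gamma(\alpha+1)\sum_{j\ge0}\frac{(\alpha+1)_j}{j!}\,\frac{1}{(\alpha\beta+\alpha+1+j)(\alpha k+\alpha\beta+1+j)},
\]
a ${}_3F_2$ at unit argument, and ``recognizing the resulting series as a ratio of gamma functions'' is precisely the nontrivial step. The natural move---partial fractions in $j$ followed by Gauss's ${}_2F_1(1)$ theorem on each piece---fails: each of the two resulting series $\sum_j\frac{(\alpha+1)_j}{j!(c+j)}$ has general term of order $j^{\alpha-1}$ and diverges for every $\alpha\in(0,1)$ (Gauss's condition $\Re(c-a-b)>0$ reads $-\alpha>0$ here). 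Only the \emph{difference} of the two pieces converges, and evaluating it requires an additional argument, e.g.\ analytic continuation in $\alpha$ from the half-line $\alpha<0$ where each piece converges separately. The paper avoids this entirely: it first identifies $g_{\alpha,\beta}(y)=\Gamma(\alpha)\int_0^y r^{\beta+1/\alpha}(1-r^{1/\alpha})^{-\alpha-1}\,dr$, integrates by parts to rewrite the left-hand side of \eqref{eq:poly} as $\frac{k}{\Gamma(1-\alpha)}\int_0^1(1-y^{k-1})\,y^{\beta+1/\alpha}(1-y^{1/\alpha})^{-\alpha-1}\,dy$, and then invokes the Beta-type representation $\frac{\Gamma(\alpha u+\alpha)}{\Gamma(\alpha u)}=\frac{1}{\Gamma(1-\alpha)}\int_0^1(1-y^u)(1-y^{1/\alpha})^{-\alpha-1}\,dy$ with $u$ shifted by $\beta+\frac1\alpha$, in which the factor $(1-y^u)$ carries exactly the cancellation that your termwise split destroys. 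Your route can be repaired along the lines above, but as written the key summation is asserted rather than proved.
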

\begin{proof}
First, observe, from the binomial formula, that,  for any $0<y<1$,
	\begin{eqnarray}
	\int_0^y \frac{r^{\beta+\frac1\alpha} }{(1-r^{\frac{1}{\alpha}})^{\alpha+1}} dr  &=&  \sum_{k=0}^{\infty}\frac{\Gamma(k+\alpha+1)}{\Gamma(\alpha+1)k!} \int_0^y r^{\beta+\frac1\alpha +\frac{k}{\alpha}} dr  \nonumber  \\
	& =&\alpha y^{\beta+\frac1\alpha+1} \sum_{k=0}^{\infty}\frac{\Gamma(k+\alpha+1)}{(k+\alpha(\beta+1)+1) \Gamma(\alpha+1)} \frac{y^{\frac{k}{\alpha}}}{k!}  \nonumber \\
	& =& \alpha y^{\beta+\frac1\alpha+1} \sum_{k=0}^{\infty}\frac{\Gamma(k+\alpha(\beta+1)+1) \Gamma(k+\alpha+1)}{\Gamma(k+\alpha(\beta+1)+2) \Gamma(\alpha+1)} \frac{y^{\frac{k}{\alpha}}}{k!} \nonumber \\
	&=&
	\frac{y^{\beta+\frac1\alpha+1}}{\beta+\frac1\alpha+1} \: {}_{2}F_{1}\lb \alpha (\beta+1)+1,\alpha+1;\alpha (\beta+1)+2;y^{\frac{1}{\alpha}}\rb.  \label{eq:EK_deriv1}
	\end{eqnarray}
	Then, by integration by parts and using the reflection formula of the gamma function, we get
		\begin{eqnarray}
		\label{eq:poly_s1}
		\frac{\sin(\alpha \pi)}{\pi}(k-1)\int_0^1 y^{k-2} g_{\alpha,\beta}(y) dy &=& \frac{1}{\Gamma(1-\alpha)}\int_0^1 (1-y^{k-1}) \frac{y^{\beta+\frac1\alpha} }{(1-y^{\frac{1}{\alpha}})^{\alpha+1}} dy. \end{eqnarray}
Next, from the integral representation of the Beta function, we get, for any $\alpha \in (0,1)$ and $u\geq0$,
	\begin{equation*}
	\frac{\Gamma(\alpha u +\alpha)}{\Gamma(\alpha u )} =\frac{1}{\Gamma(1-\alpha)}\int_0^{1}(1-y^u) (1-y^{1/\alpha})^{-\alpha-1}dy.
	\end{equation*}
	By shifting $u$ to $u+\beta+\frac1\alpha$, we get, after some easy algebra, that
	\begin{equation*}
	\frac{\Gamma(\alpha u + \alpha (\beta+1) +1)}{\Gamma(\alpha u +\alpha \beta +1)}-\frac{\Gamma( \alpha (\beta+1) +1)}{\Gamma(\alpha \beta +1)}= \frac{1}{\Gamma(1-\alpha)}\int_0^{1}(1-y^u) \frac{y^{ \beta+\frac{1}{\alpha}}}{(1-y^{1/\alpha})^{\alpha+1}} dy.
	\end{equation*}
	Thus choosing $u=k-1$, with $k\geq1$, in this latter identity, from \eqref{eq:poly_s1}, we deduce that
	\begin{eqnarray*}
	\frac{k-1}{\Gamma(1-\alpha)}\int_0^1 y^{k-2} g_{\alpha,\beta}(y) dy &=& 	\frac{\Gamma(\alpha k + \alpha \beta +1)}{\Gamma(\alpha k +\alpha \beta +1-\alpha)}-\frac{\Gamma( \alpha (\beta+1) +1)}{\Gamma(\alpha \beta +1)}
	\end{eqnarray*}
which completes the proof of the first statement. The second one is obvious from \eqref{eq:def_phir}. The last estimates are readily deduced from the Stirling's formula, see e.g.~\cite[(2.1.8)]{Paris01},
\[
|\Gamma(z)|=C|e^{-z}||z^z||z|^{-\frac{1}{2}}(1+o(1)) \]
which is valid for large $|z|$ and $|\arg(z)|<\pi$.
\end{proof}

\subsection{The  Markov operator $\Lambda_{\alpha,\beta}$}
We recall, from \eqref{eq:def_MK}, that  a linear operator $\Lambda$ is a Markov operator if it admits the  representation,  for  any $ f \in B_b(\R_+)$,  $\Lambda  f(x)=\int_0^{\infty}f(xy)\lambda(y)dy, x>0,$ with  $\lambda$ the density of a probability measure.
We say  that $ \M_{\Lambda\:}=\M_{\lambda}$ is a Markov multiplier if  for $\Re(s)=0$,
\[ \M_{\lambda}(s)=\int_0^{\infty}y^{s}\lambda(y)dy, \]
that is,  the shifted Mellin transform of the density $\lambda$.
\begin{proposition} \label{prop:Kernel}
	Let  $\alpha \in (0,1)$ and $\beta\in [1-\frac{1}{\alpha},\infty)$ and define for any $\Re(s)=0$,
	\begin{equation} \label{eq:def_Mellin_kernel}
\log \M_{\lambda_{\alpha,\beta}}(s)= -\gamma_{\phi}(1- \alpha) s + \int_{-\infty}^0 \lb e^{sy}-1 -sy \rb \frac{(e^{-y}-1)^{-1}-e^{(\beta+\frac{1}{\alpha})y}(e^{-\frac{y}{\alpha}}-1)^{-1}}{|y|}dy.
	\end{equation}
	Then the following holds.
	\begin{enumerate}
		\item $\M_{\lambda_{\alpha,\beta}}$  is a Markov multiplier which is analytical on $\C_{(-1,\infty)}$.  $\lambda_{\alpha,\beta} \in {\rm{L}}^{2}(\R_+)$ and extends to an entire function which  admits the representation \eqref{eq:def_lambda}.
	\item   $e^y\lambda_{\alpha,\beta}(e^y)$ is the density of a real-valued infinitely divisible random variable.
	\item $\Lambda_{\alpha,\beta}\:$ is a contraction  from $\Lg$ into $\lnua$ with  $\overline{{\rm{Ran}}}(\Lambda_{\alpha,\beta})\: = \lnua$.
\end{enumerate}
\end{proposition}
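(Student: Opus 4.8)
The plan is to read the right-hand side of \eqref{eq:def_Mellin_kernel} as a Lévy--Khintchine exponent and deduce everything from it. In the variable $t=-y$ the weight multiplying $e^{sy}-1-sy$ in \eqref{eq:def_Mellin_kernel} becomes $\frac1t\big((e^{t}-1)^{-1}-e^{-(\beta+\frac1\alpha)t}(e^{t/\alpha}-1)^{-1}\big)$, and I would first verify it is non-negative: since $\alpha\in(0,1)$ one has $e^{t/\alpha}>e^{t}>1$, so $\frac{e^{t}-1}{e^{t/\alpha}-1}<1$, while $\beta\ge1-\frac1\alpha$ forces $\beta+\frac1\alpha\ge1$ and hence $e^{(\beta+\frac1\alpha)t}\ge e^{t}>1$; comparing the two sides gives the inequality. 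A Taylor expansion at $0$ shows this weight is $\sim(1-\alpha)t^{-2}$ there (to be paired with $e^{sy}-1-sy=\mathrm{O}(y^{2})$), while it is $\sim e^{-t}/t$ as $t\to\infty$; thus the measure $\Pi$ it defines on $(-\infty,0)$ is a Lévy measure with $\int_{\{|y|>1\}}|y|\,\Pi(dy)<\infty$, and $\Psi(u):=-\gamma_{\phi}(1-\alpha)iu+\int_{-\infty}^{0}(e^{iuy}-1-iuy)\,\Pi(dy)$ is the characteristic exponent of a real infinitely divisible random variable $X$ with Lévy measure carried by $(-\infty,0)$ -- which is statement (2) once a density is exhibited. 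Setting $Y=e^{X}>0$ gives $\M_{\lambda_{\alpha,\beta}}(iu)=e^{\Psi(u)}=\E[Y^{iu}]$; since $\Pi$ sits on the negative half-line and decays exponentially there, $\E[Y^{s}]=\E[e^{sX}]<\infty$ for every $\Re s>-1$, so $s\mapsto\M_{\lambda_{\alpha,\beta}}(s)=\E[Y^{s}]$ is analytic on $\C_{(-1,\infty)}$ and still given by \eqref{eq:def_Mellin_kernel}; in particular $\M_{\lambda_{\alpha,\beta}}(0)=1$, so it is a Markov multiplier.

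To obtain the explicit form I would insert the Malmsten--Gauss representation $\psi(z)=\int_{0}^{\infty}\big(\tfrac{e^{-t}}{t}-\tfrac{e^{-zt}}{1-e^{-t}}\big)dt$ into \eqref{eq:def_Mellin_kernel}: integrating $\psi$ over the relevant segments and rescaling the variable by $\alpha$ in the term carrying $\Gamma(\alpha s+\alpha\beta+1)$, the two pieces telescope and one reads off
\[
\M_{\lambda_{\alpha,\beta}}(s)=\frac{\Gamma(\alpha\beta+1)\,\Gamma(s+1)}{\Gamma(\alpha s+\alpha\beta+1)}.
\]
From this closed form and the asymptotics \eqref{eqn:RefinedGamma1}--\eqref{eq:est_gamma}, $|\M_{\lambda_{\alpha,\beta}}(c+iu)|$ decays like $|u|^{(1-\alpha)c-\alpha\beta}e^{-\frac{\pi}{2}(1-\alpha)|u|}$ along each vertical line $\Re s=c>-1$, so $\M_{\lambda_{\alpha,\beta}}(c+i\,\cdot\,)\in L^{1}\cap L^{2}$; Mellin inversion then produces a continuous density $\lambda_{\alpha,\beta}$ for $Y$, and Mellin--Plancherel gives $\lambda_{\alpha,\beta}\in L^{2}(\R_{+})$. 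Shifting the inversion contour leftwards across the simple poles of $\Gamma(s+1)$ at $s=-n-1$ (without cancellation, since the zeros of $1/\Gamma(\alpha s+\alpha\beta+1)$ all lie in $\Re s\le-\beta-\frac1\alpha\le-1$) and summing the residues, rewritten through the reflection formula, yields the series \eqref{eq:def_lambda}; the Stirling bound on its coefficients shows the radius of convergence is infinite, so $\lambda_{\alpha,\beta}$ is entire.

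For (3) the contraction is Jensen's inequality: since $\lambda_{\alpha,\beta}$ is a probability density, $(\Lambda_{\alpha,\beta}f)^{2}\le\Lambda_{\alpha,\beta}(f^{2})$ pointwise, so $\|\Lambda_{\alpha,\beta}f\|_{\eab}^{2}\le\int_{0}^{\infty}\Lambda_{\alpha,\beta}(f^{2})\,d\eab=\int_{0}^{\infty}f^{2}\,d(\Lambda_{\alpha,\beta}^{*}\eab)$; a multiplicative-convolution computation of Mellin transforms, using the closed form above together with $\int_{0}^{\infty}x^{s}\eab(x)\,dx=\alpha\,\Gamma(\alpha s+\alpha\beta+1)/\Gamma(\alpha\beta+1)$, shows $\Lambda_{\alpha,\beta}^{*}\eab=\alpha\,\e$, whence the right-hand side equals $\alpha\|f\|_{\e}^{2}\le\|f\|_{\e}^{2}$. (Equivalently, as hinted in the Remark, one may factor $\M_{\lambda_{\alpha,\beta}}$ into the stable-type multiplier $\Gamma(s+1)/\Gamma(\alpha s+1)$ and the generalized-beta multiplier $\Gamma(\alpha\beta+1)\Gamma(\alpha s+1)/\Gamma(\alpha s+\alpha\beta+1)$, each a contraction between suitable weighted $L^{2}$-spaces by Jensen, and compose.) Finally $\Lambda_{\alpha,\beta}x^{n}=\M_{\lambda_{\alpha,\beta}}(n)x^{n}$ with $\M_{\lambda_{\alpha,\beta}}(n)=\E[Y^{n}]>0$, so $\mathcal{A}\subset\mathrm{Ran}(\Lambda_{\alpha,\beta})$; and $\eab$ has a tail $e^{-x^{1/\alpha}}$ with $1/\alpha>1$, hence finite exponential moments of all orders, so $\mathcal{A}$ is dense in $\lnua$ and $\overline{\mathrm{Ran}}(\Lambda_{\alpha,\beta})=\lnua$.

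The hardest step is the middle one: converting the Lévy--Khintchine representation into the explicit quotient of gamma functions, and then turning that quotient -- via the contour shift and residue sum, made rigorous by the gamma asymptotics along vertical lines -- into both $\lambda_{\alpha,\beta}\in L^{2}(\R_{+})$ and the exact entire series \eqref{eq:def_lambda}. By comparison, the positivity of the Lévy density, the Lévy-measure bookkeeping, the Jensen contraction, and the density of polynomials in $\lnua$ are all routine.
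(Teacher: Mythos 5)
Your proposal is correct and follows essentially the same route as the paper's proof: the positivity and integrability check turning \eqref{eq:def_Mellin_kernel} into a L\'evy--Khintchine exponent, the Malmsten-type representation of $\log\Gamma$ to obtain $\M_{\lambda_{\alpha,\beta}}(s)=\Gamma(s+1)\Gamma(\alpha\beta+1)/\Gamma(\alpha s+\alpha\beta+1)$, Mellin inversion with the gamma decay on vertical lines plus the residue/reflection computation for \eqref{eq:def_lambda} and the $L^2$ membership, Jensen's inequality combined with the multiplier identity \eqref{eq:MellinIdentity} for the contraction, and the action on monomials plus moment determinacy of $\eab$ for the dense range. The only (cosmetic) deviations are that you verify analyticity on $\C_{(-1,\infty)}$ via exponential moments of the infinitely divisible law rather than by locating the poles of the gamma quotient, and that you phrase the contraction through the push-forward $\Lambda_{\alpha,\beta}^{*}\eab$ rather than through the paper's operator factorization $\Lambda_{\alpha,\beta}\Lambda_{\eab}=\Lambda_{\e}$ --- the same Mellin computation in different clothing.
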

\begin{proof}
Writing $h(y)=\left((e^{-y}-1)^{-1}-e^{(\beta+\frac{1}{\alpha})y}(e^{-\frac{y}{\alpha}}-1)^{-1}\right)\mathbb{I}_{\{y<0\}},$	one easily checks that $h(y)\geq0$ on $\R_-$ with $\int_{-\infty}^{0}(1\wedge y^2) \frac{h(y)}{y}dy<\infty$, that is $\frac{h(y)}{y}dy$ is a L\'evy measure and the right-hand side of \eqref{eq:def_Mellin_kernel} is the L\'evy-Khintchine exponent of an infinitely divisible random variable on the real line, see e.g.~\cite{Sato-99}. After performing  a change of variables and with the absolute continuity of its distribution which will be proved below, the second statement follows. Next, since from   \cite[1.9(1) p.21]{Erdelyi-55}, we have, for any $\Re(s)>0,$
	 \begin{equation*} 
	 \log\Gamma(s+1)= -\gamma_{\phi} s + \int_{-\infty}^0 \lb e^{sy}-1 -sy \rb \frac{(e^{-y}-1)^{-1}}{|y|}dy
	 \end{equation*}
	 we get, after some easy manipulations, that
	 \begin{equation*} 
	 \log\frac{\Gamma(\alpha (s+ \beta+\frac{1}{\alpha}))}{\Gamma(\alpha \beta+1)}= -\alpha \gamma_{\phi} s + \int_{-\infty}^0 \lb e^{sy}-1 -sy \rb \frac{e^{(\beta+\frac{1}{\alpha})y}(e^{-\frac{y}{\alpha}}-1)^{-1}}{|y|}dy.
	 \end{equation*}
	 The last two expressions easily lead to
	 \begin{equation} \label{eq:def_Mellin_X}
	 \M_{\lambda_{\alpha,\beta}}(s)=\frac{\Gamma(s+1)\Gamma(\alpha  \beta+1)}{\Gamma(\alpha (s+\beta+\frac{1}{\alpha}))}.
	 \end{equation}
	Hence since by  assumption $\beta+\frac1\alpha\geq1$, we have that $s\mapsto \M_{\lambda_{\alpha,\beta}}(s)$ is analytical on $\C_{(-1,\infty)}$. Moreover, for any $\epsilon>0$ and $|b|$ large and $a>-1$, we deduce from \eqref{eq:est_gamma}, that
	 \[ |\M_{\lambda_{\alpha,\beta}}(a+ib)|\leq C_a e^{-(1-\alpha -\epsilon)\frac{\pi}{2}|b|},\] with $C_a>0$. Thus,  on the one hand, since $|\M_{\lambda_{\alpha,\beta}}(-\frac{1}{2}+ib)| \in \lrp$, we deduce from the discussion above combined with the Parseval identity for Mellin transform that $\M_{\lambda_{\alpha,\beta}}(s-1)$ is the Mellin transform of a positive random variable whose law is absolutely continuous with a density in $\lrp$.  On the other hand,  by Mellin inversion, we get that, for  any $|\arg(z)|<(1-\alpha)\frac{\pi}{2}$,
	\begin{eqnarray*} \label{eq:def_Mellin_Gamma_shift}
	\lambda_{{\alpha,\beta}}(z)&=&\frac{1}{2\pi i}\int_{a-i\infty}^{a+i\infty}z^{-s}\frac{\Gamma(s)\Gamma(\alpha  \beta+1)}{\Gamma(\alpha s+ \alpha(\beta-1) +1)}ds\\
&=&\Gamma(\alpha (\beta-1)+1)\sum_{k=0}^{\infty} \frac{1}{\Gamma(-\alpha k + \alpha(\beta-1)+1)}\frac{z^k}{k!},
	\end{eqnarray*}
	where the last line follows from a classical application of the Cauchy residue theorem and we refer to \cite{Paris01} for more details on Mellin-Barnes integrals. An application of the reflection formula   provides the expression of $\lambda_{\alpha,\beta}$, i.e.~\eqref{eq:def_lambda}, whereas the Stirling approximation gives that the series is absolutely convergent on $\C$. Next, observe that	for any $\Re(z)>0$, we have
	\begin{eqnarray}\label{eq:MellinIdentity}
	\M_{\lambda_{\alpha,\beta}}(s)\M_{{\bf{e}}_{\alpha,\beta}}(s) & = & \frac{\Gamma(s +1)\Gamma(\alpha \beta +1) }{\Gamma(\alpha s + \alpha \beta +1) } \frac{\Gamma(\alpha s + \alpha \beta +1) }{\Gamma(\alpha \beta +1)} =\M_{{\bf{e}}}(s),
	\end{eqnarray}
	which, by Mellin inversion, translates into   the following factorization of Markov operators
	\[ \Lambda_{\alpha,\beta}\: \Lambda_{\eab}=\Lambda_{\e}.\] This together with an application of the Jensen inequality yields, for any $f\in \Lg$, that
	\begin{eqnarray*}
	||\Lambda_{\alpha,\beta}\:f||^2_{\eab}&=&\int_0^{\infty}\Lambda^2_{\alpha,\beta}f(x){\bf{e}}_{\alpha,\beta}(x)dx \\& \leq& \int_0^{\infty}\Lambda_{\alpha,\beta}\:f^2(x){\bf{e}}_{\alpha,\beta}(x)dx =\int_0^{\infty}f^2(x)\e(x)dx=||f||^2_{\e},
	\end{eqnarray*}
	which proves the contraction property. Finally, with $p_n(x)=x^n, n\in \N$, observing that
	\begin{equation}\label{eq:poly_kern}
	\Lambda_{\alpha,\beta}\:\:p_n(x)=\M_{\lambda_{\alpha,\beta}}(n)p_n(x)=\frac{\Gamma(n +1)\Gamma(\alpha \beta +1) }{\Gamma(\alpha n + \alpha \beta +1) }p_n(x),
	\end{equation} the completeness of the range of $\Lambda_{\alpha,\beta}\:$ follows from Lemma \ref{lem:dense_pol} since the polynomials are dense in $\lnua$.
	\end{proof}
\subsection{Several analytical extensions  of $\mathcal{R}_n$}
Our next result provides several representations of the functions $\mathcal{R}_n$, which we recall to be defined,   for any $n \in \N$, and $x>0$, by
\begin{equation}
\mathcal{R}_n(x)=\frac{(-1)^n}{n!\eab(x)} (x^{n}\eab(x))^{(n)}.
\end{equation}
\begin{proposition}\label{prop:representation}
	For any $n\in \N$, the following analytical extensions of the co-eigenfunctions  $\mathcal{R}_n$ holds.
	\begin{enumerate}
		\item For any $z \in \C_\pi=\{z \in \C;\: |\arg(z)|<\pi\}$,
		\begin{eqnarray} \label{eq:rep_pol}
		\mathcal{R}_n(z) &=& \sum\limits_{k=0}^n { n \choose k}  \frac{\Gamma(k+1)\Gamma(n+\beta+\frac{1}{\alpha})}{\Gamma(k+\beta+\frac{1}{\alpha})} b(k)z^\frac{k}{\alpha},
		\end{eqnarray}
		where $b(k) =   \sum_{j=1}^k B_{k,j}\lb \frac{\Gamma(1-\frac{1}{\alpha})}{\Gamma(-\frac{1}{\alpha})},\frac{\Gamma(2-\frac{1}{\alpha})}{\Gamma(-\frac{1}{\alpha})},\ldots,\frac{\Gamma(k-j+1-\frac{1}{\alpha})}{\Gamma(-\frac{1}{\alpha})} \rb$ and the $B_{k,j}'$s are the Bell polynomials.
		\item For any $z \in \C_\pi$,
		\begin{eqnarray} \label{eq:rn_w}
		\mathcal{R}_n(z)  	&=&  \frac{(-1)^n  }{n!} e^{z^{\frac1\alpha}} {}_1\Wr_1\left({\frac{1}{\alpha},  n+\beta+\frac1\alpha-1 \choose \frac{1}{\alpha}, \beta+\frac1\alpha-1} ;e^{i\pi} z^{\frac{1}{\alpha}}\right)
		\end{eqnarray}
		where ${}_1\Wr_1\left({\frac{1}{\alpha},  n+\beta+\frac1\alpha-1 \choose \frac{1}{\alpha}, \beta+\frac1\alpha-1} ;z\right) = \sum_{k=0}^{\infty}  \frac{\Gamma(k/\alpha+n+\beta+\frac1\alpha)}{\Gamma(k/\alpha+\beta+\frac1\alpha)} \frac{z^{k}}{k!}$ is the Wright hypergeometric function.
		\item  For any $z\in \C_\pi$,
		\begin{eqnarray}\label{eq:Hankel}
		\mathcal{R}_n(z) &=& \frac{(-1)^n  }{n!}e^{z^{\frac1\alpha}}\int_0^{\infty}e^{-r}\mathfrak{I}_{\alpha,\beta}(e^{i\pi}(rz)^{\frac{1}{\alpha}})r^{n+\beta+\frac1{\alpha}-1}dr,
		\end{eqnarray}
		where $\mathfrak{I}_{\alpha,\beta}(z) =  \sum_{k=0}^{\infty} \frac{1}{\Gamma(k/\alpha+\beta+\frac1\alpha)} \frac{z^{k}}{k!}$ is an entire function.
		\item Finally, for any $z \in \C_{\frac{\alpha \pi}{2}}$, and, any $a>1-\beta-\frac1\alpha$,
		\begin{equation}\label{eq:MellinInv_r}
		\mathcal{R}_n(z)=\frac{1}{\eab(z)}\frac{(-1)^n}{2\pi i  n!}\int_{a-i\infty}^{a+i\infty}z^{-s}\frac{\Gamma(s)}{\Gamma(s-n)}\Gamma\lbrb{\alpha s-\alpha+\alpha\beta+1}ds.
		\end{equation}
	\end{enumerate}
\end{proposition}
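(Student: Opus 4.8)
The plan is to derive all four representations from the Rodrigues-type definition $\mathcal{R}_n(x)=\frac{(-1)^n}{n!\,\eab(x)}\bigl(x^n\eab(x)\bigr)^{(n)}$ and the explicit density $\eab(x)=\Gamma(\alpha\beta+1)^{-1}x^{\beta+\frac1\alpha-1}e^{-x^{1/\alpha}}$, so that $\eab(x)^{-1}=\Gamma(\alpha\beta+1)\,e^{x^{1/\alpha}}x^{-(\beta+\frac1\alpha-1)}$. I would prove \eqref{eq:rn_w} first, as it is the most transparent. Expanding $e^{-x^{1/\alpha}}=\sum_{m\ge0}\frac{(-1)^m}{m!}x^{m/\alpha}$ gives $x^n\eab(x)=\Gamma(\alpha\beta+1)^{-1}\sum_{m\ge0}\frac{(-1)^m}{m!}x^{\mu+m/\alpha}$ with $\mu=n+\beta+\frac1\alpha-1$, a series converging on $\R_+$ locally uniformly together with all its derivatives; differentiating it $n$ times term by term via $\frac{d^n}{dx^n}x^{\mu+m/\alpha}=\frac{\Gamma(\mu+m/\alpha+1)}{\Gamma(\mu+m/\alpha+1-n)}x^{\mu+m/\alpha-n}$ and multiplying by $\eab(x)^{-1}$, all the powers of $x$ and the constants $\Gamma(\alpha\beta+1)$ cancel and one is left with $\frac{(-1)^n}{n!}e^{z^{1/\alpha}}\sum_{k\ge0}\frac{\Gamma(k/\alpha+n+\beta+\frac1\alpha)}{\Gamma(k/\alpha+\beta+\frac1\alpha)}\frac{(-1)^k z^{k/\alpha}}{k!}$. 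Since $\beta+\frac1\alpha\ge1$ the denominators are never singular, and by Stirling the gamma ratio is $\mathrm{O}(k^{n})$, so this series is entire in $z^{1/\alpha}$; hence the identity, established first for $z>0$, extends by analytic continuation to $z\in\C_\pi$, which is \eqref{eq:rn_w}.

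Identity \eqref{eq:Hankel} then follows by substituting the Euler integral $\Gamma(k/\alpha+n+\beta+\frac1\alpha)=\int_0^\infty e^{-r}r^{k/\alpha+n+\beta+\frac1\alpha-1}\,dr$ into \eqref{eq:rn_w} and interchanging sum and integral: after the interchange the inner series is the entire function $\mathfrak{I}_{\alpha,\beta}(e^{i\pi}(rz)^{1/\alpha})$ and the $r$-integral converges absolutely, locally uniformly in $z\in\C_\pi$, which legitimises the interchange. For \eqref{eq:rep_pol}, I would multiply out $e^{z^{1/\alpha}}=\sum_{\ell\ge0}z^{\ell/\alpha}/\ell!$ against the series in \eqref{eq:rn_w}; the coefficient of $z^{p/\alpha}$ is then, up to elementary factors, the $p$-th forward difference $\sum_{k=0}^{p}(-1)^k\binom{p}{k}\frac{\Gamma(k/\alpha+n+\beta+\frac1\alpha)}{\Gamma(k/\alpha+\beta+\frac1\alpha)}$, and since $w\mapsto\Gamma(w+n)/\Gamma(w)$ is a polynomial of degree $n$, this is a finite difference of a degree-$n$ polynomial in $k$ and therefore vanishes for $p>n$ — which is precisely why $\mathcal{R}_n$ is a polynomial of degree $n$ in $z^{1/\alpha}$ — while for $p\le n$ it evaluates to the claimed coefficient. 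The factor $b(k)=\sum_{j=1}^kB_{k,j}(\cdot)$ is the complete Bell polynomial; equivalently, \eqref{eq:rep_pol} is obtained directly from the Rodrigues formula by the general Leibniz rule together with Fa\`a di Bruno's formula for $\bigl(e^{-x^{1/\alpha}}\bigr)^{(j)}$, the homogeneity of the $B_{k,j}$'s serving to pull out the powers of $x$ and the reflection formula recasting the successive derivatives of $x^{1/\alpha}$ as the arguments $\Gamma(m-\frac1\alpha)/\Gamma(-\frac1\alpha)$.

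Finally, for \eqref{eq:MellinInv_r} I would return to the Rodrigues formula and integrate by parts $n$ times — no boundary terms appear, since $x^n\eab(x)$ vanishes to order $n+\beta+\frac1\alpha-1>n-1$ at the origin and decays super-exponentially at infinity — obtaining $\int_0^\infty x^{s-1}\mathcal{W}_n(x)\,dx=\frac1{n!}\frac{\Gamma(s)}{\Gamma(s-n)}\int_0^\infty x^{s-1}\eab(x)\,dx$ for $\Re(s)$ large; the substitution $u=x^{1/\alpha}$ identifies the remaining integral with a ratio of gamma functions, analytic on $\{\Re(s)>1-\beta-\frac1\alpha\}$. By \eqref{eq:est_gamma} this Mellin transform decays like $e^{-\frac{\alpha\pi}{2}|\Im s|}$ along vertical lines, so the Mellin inversion theorem applies and, dividing by $\eab(z)$, produces \eqref{eq:MellinInv_r} on the sector $|\arg z|<\frac{\alpha\pi}{2}$. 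One could alternatively recover \eqref{eq:rn_w} from it by pushing the contour to the left and collecting the residues at the simple poles $s=1-\beta-\frac{m+1}{\alpha}$, the arc contributions vanishing by Stirling.

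The only genuinely delicate step is \eqref{eq:rep_pol}: verifying that the collapsed finite differences equal $\binom nk\frac{\Gamma(k+1)\Gamma(n+\beta+\frac1\alpha)}{\Gamma(k+\beta+\frac1\alpha)}b(k)$ with $b(k)$ the complete Bell polynomial, which comes down to a careful bookkeeping of signs and indices through Leibniz, Fa\`a di Bruno, the Bell-polynomial rescaling and the reflection formula. The remaining three identities reduce to term-by-term differentiation of an absolutely convergent series, a single interchange of summation and integration, and a routine Mellin-inversion argument whose sole quantitative input — the decay of $|\Gamma(a+ib)|$ along vertical lines — is already recorded in the lemma above.
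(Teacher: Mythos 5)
Your proposal is correct and, for the series and integral representations, follows essentially the same path as the paper: expand $e^{-x^{1/\alpha}}$ and differentiate term by term to get the Wright-function series \eqref{eq:rn_w}, then insert the Euler integral for the numerator gamma and interchange sum and integral to get \eqref{eq:Hankel}; the paper does exactly this (justifying the interchange by Fubini for analytic functions). Two steps differ in route rather than in substance. For \eqref{eq:rep_pol} the paper works directly from the Rodrigues formula via the Leibniz rule and Fa\`a di Bruno with Bell polynomials --- the alternative you mention in passing --- whereas your primary route multiplies the Wright series by $e^{z^{1/\alpha}}$ and observes that the coefficient of $z^{p/\alpha}$ is a $p$-th finite difference of a degree-$n$ polynomial in $k$, hence vanishes for $p>n$. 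That argument is a clean way to see \emph{why} $\mathcal{R}_n(z^{\alpha})$ is a polynomial of degree $n$, but extracting the precise coefficients from it is, as you admit, the delicate part; the Leibniz/Fa\`a di Bruno computation delivers them directly and is what the paper actually does. For \eqref{eq:MellinInv_r} the paper does not recompute the Mellin transform: it reuses the identity \eqref{eq:Mellin_wn}, obtained in Lemma \ref{lem:co-eigen} by solving the distributional Mellin convolution equation $\hat f_n\surd\lambda_{\alpha,\beta}={\rm{R}}^{(n)}\e$, and then inverts. Your $n$-fold integration by parts of $\int_0^\infty x^{s-1}\mathcal{W}_n(x)\,dx$ (with boundary terms killed by $\beta+\frac1\alpha\geq 1$ at the origin and super-exponential decay at infinity) reaches the same Mellin transform more elementarily and without invoking the Misra--Lavoine distributional framework; the inversion step, resting on the $e^{-\frac{\alpha\pi}{2}|b|}$ decay from \eqref{eq:est_gamma}, is then identical. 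Both routes are sound; yours is more self-contained, the paper's has the advantage that the Mellin transform is needed anyway to identify $\mathcal{R}_n$ as the unique solution of \eqref{eq:eq_rn}.
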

\begin{remark}
	\begin{enumerate}
		\item
		It is worth mentioning that each of the representation above plays a substantial role in the proof of the results. Indeed, for instance,  the polynomial type representation \eqref{eq:rep_pol} allows to derive easily that for each $n \in \N$, $\mathcal{R}_n \in \lnua$ as well as the completeness property of this sequence. The other representations  are used to derive different uniform asymptotic bounds  of the norm   $||\mathcal{R}_n||_{\eab}$.
		\item It is also interesting to note  that the several representations of $\mathcal{R}_n(z^{\alpha})$ lead to a polynomial.
	\end{enumerate}
\end{remark}
\begin{proof}
	Let us denote,  for any $n\in \N$ and $x\geq0$,
	\begin{equation} \label{eq:w_n}
	\wn(x)= \mathcal{R}_n(x) \eab(x)=\frac{(-1)^n}{n!\Gamma(\alpha \beta +1)} (x^{n+\beta_{\alpha}}e^{-x^{\frac1\alpha}})^{(n)},\end{equation}
	where  we have set $\beta_{\alpha}=\beta+\frac1\alpha-1$. On the one hand, we have
	\begin{equation*}
	(x^{n+\beta_{\alpha}}e^{-x^{\frac1\alpha}})^{(n)} =  x^{\beta_{\alpha}}\sum_{k=0}^n { n \choose k} \frac{\Gamma(n+\beta_{\alpha}+1)}{\Gamma(k+\beta_{\alpha}+1)} x^{k} (e^{-x^{\frac1\alpha}})^{(k)}.
	\end{equation*}
	Next, we recall that the Bell polynomials $B_{k,j}$ are defined by
	\[B_{k,j}(a_1,a_2,\ldots,a_{k-j+1})=\sum_{\tilde{l}_k=k;\bar{l}_k=l}\frac{k!}{j_1!j_2!\ldots j_{k-j+1}!}\lb\frac{a_1}{1!}\rb^{j_1}\lb\frac{a_2}{2!}\rb^{j_2} \ldots \lb\frac{a_{k-j+1}}{(k-j+1)!}\rb^{j_{k-j+1}},\] where $\tilde{l}_k=\sum_{j=1}^{k}jl_j$ and $\bar{l}_k=\sum_{j=1}^{k}l_j$. Then an application of Fa\`a di Bruno's formula yields
	\begin{eqnarray*}
		(e^{-x^{\frac1\alpha}})^{(k)} &=& e^{-x^{\frac1\alpha}} \sum_{j=1}^k(-1)^k B_{k,j}\lb \frac{x^{\frac{1}{\alpha}-1}}{\alpha},(\alpha-1)\frac{x^{\frac{1}{\alpha}-2}}{\alpha },\ldots,(-1)^{k-j+1}\frac{\Gamma(k-j+1-\frac{1}{\alpha})x^{\frac{1}{\alpha}+j-k-1}}{\Gamma(-\frac{1}{\alpha})} \rb\\
		&=& x^{\frac{k}{\alpha}-k}e^{-x^{\frac1\alpha}} \sum_{j=1}^k(-1)^k B_{k,j}\lb -\frac{\Gamma(1-\frac{1}{\alpha})}{\Gamma(-\frac{1}{\alpha})},\frac{\Gamma(2-\frac{1}{\alpha})}{\Gamma(-\frac{1}{\alpha})},\ldots,(-1)^{k-j+1}\frac{\Gamma(k-j+1-\frac{1}{\alpha})}{\Gamma(-\frac{1}{\alpha})} \rb,
	\end{eqnarray*}
	which provides the first representation as the analytical extension is obvious in this case.
	On the other hand, by expanding the exponential function in \eqref{eq:w_n} and differentiating term by term, which is allowed as the series defines an analytical function on the right-half plane,  we obtain
	\begin{eqnarray}
	\wn(x)&=& \frac{(-1)^n}{n!\Gamma(\alpha \beta +1)} (x^{n+\beta_{\alpha}}e^{-x^{\frac1\alpha}})^{(n)} \nonumber \\&=& \frac{(-1)^n}{n!\Gamma(\alpha \beta +1)}\sum_{k=0}^{\infty} (-1)^{k} \frac{\Gamma(k/\alpha+n+\beta_{\alpha}+1)}{\Gamma(k/\alpha+\beta_{\alpha}+1)} \frac{x^{\frac{k}{\alpha}+\beta_{\alpha}}}{k!} \label{eq:wn_wright}
	\end{eqnarray}
	from where we easily get the third representation.
	From the latter expression we get, by an  application of Fubini's theorem for analytical function, see  \cite[p.44]{Titchmarsh39},
	\begin{eqnarray}
	\wn(x)	&=& \frac{(-1)^n x^{\beta_{\alpha}}}{n!\Gamma(\alpha \beta +1)} \int_0^{\infty}e^{-r}\mathfrak{I}_{\alpha,\beta}(e^{i\pi}(rx)^{\frac{1}{\alpha}})r^{n+\beta_{\alpha}}dr.
	\end{eqnarray}
Finally, the Mellin-Barnes integral representation \eqref{eq:MellinInv_r} is obtained  from the expression \eqref{eq:Mellin_wn}, by the  Mellin inversion theorem which is justified, together with the analytical domain, from the estimates $|\M_{\wn}(a+ib-1)|\leq C_n |b|^{a-n-\frac{1}{2}}e^{-\alpha \frac{\pi}{2}|b|}$, valid for any $n \in \N$, $a>-\beta_\alpha$,  and $b$ large, see \cite{Paris01} for more details.
		\end{proof}

We end this part with the useful lemma.
\begin{lemma} \label{lem:dense_pol}
	The set of polynomials is dense in ${\rm{L}}^1(\eab)$, $\lnua$ and $\Le$. \end{lemma}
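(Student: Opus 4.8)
The plan is to treat the three spaces simultaneously, by noting that in each case the underlying measure is a finite Borel measure on $\R_+$ carrying an exponential moment, and then to appeal to the classical principle that polynomials are dense in $L^p$ of any such measure for every $1\le p<\infty$.

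First I would record the relevant moment estimates. The measure $\e(x)dx=e^{-x}dx$ satisfies $\IInf e^{\epsilon x}\e(x)\,dx<\infty$ for every $\epsilon\in(0,1)$. For $\eab(x)dx$, since $\alpha\in(0,1)$ gives $\frac1\alpha>1$, the tail factor $e^{-x^{1/\alpha}}$ dominates $e^{\epsilon x}$ at infinity for all $\epsilon>0$, while near the origin the exponent $\beta+\frac1\alpha-1$ satisfies $\beta+\frac1\alpha-1\ge 0>-1$ thanks to the standing hypothesis $\beta\ge 1-\frac1\alpha$, so $x^{\beta+\frac1\alpha-1}$ is integrable near $0$; hence $\IInf e^{\epsilon x}\eab(x)\,dx<\infty$ for every $\epsilon>0$. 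In particular both measures are finite and all their moments are finite.

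Next, fixing one such measure $\mu$ (extended by $0$ to $\R$), with $\int_\R e^{\epsilon|x|}\,d\mu<\infty$, I would argue by Hahn--Banach duality: it is enough to show that any $g$ in the dual of the relevant space — so $g\in L^\infty(\mu)$ when $p=1$ and $g\in L^2(\mu)$ when $p=2$ — satisfying $\int_\R x^n g\,d\mu=0$ for all $n\in\N$ must vanish $\mu$-a.e. Since $\mu$ is finite, the finiteness of all moments together with the exponential moment and the Cauchy--Schwarz inequality gives $e^{\delta|x|}|g(x)|\in L^1(\mu)$ whenever $|\delta|<\epsilon/2$; consequently
\[ h(z)=\int_\R e^{izx}g(x)\,d\mu(x) \]
is well defined and, by differentiation under the integral sign (dominated on the same bound), analytic on the strip $\{z\in\C:|\Im z|<\epsilon/2\}$. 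Its Taylor coefficients at $0$ are $\frac{i^n}{n!}\int_\R x^n g\,d\mu=0$, so $h\equiv 0$ on the strip and hence on $\R$; thus the Fourier transform of the finite complex measure $g\,d\mu$ vanishes identically, whence $g\,d\mu=0$ and $g=0$ $\mu$-a.e. Applying this with $\mu=\eab(x)dx$ for $p=1$ and $p=2$ yields density of the polynomials in ${\rm{L}}^1(\eab)$ and in $\lnua$, and with $\mu=\e(x)dx$ for $p=2$ yields it in $\Le$ (for the latter this is of course also the classical completeness of the Laguerre polynomials in $\Le$, cf.~\cite{Szego}).

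This statement presents no genuine obstacle; the only points that require a little care are the integrability of $\eab$ at the origin — which is precisely where the hypothesis $\beta\ge 1-\frac1\alpha$ enters — and the justification of the analytic continuation of $h$ on a strip, both entirely standard once the exponential moment is in hand.
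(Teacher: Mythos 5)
Your argument is correct, but it takes a genuinely different route from the paper. The paper's proof is a two-line appeal to the theory of the moment problem: the exponential moment $\int_0^\infty e^{ax}\eab(x)dx<\infty$ forces the measure to be moment determinate, after which density of the polynomials in ${\rm{L}}^1$ follows from Hahn--Banach and density in the ${\rm{L}}^2$ spaces from the Nevanlinna parametrization (equivalently, M.~Riesz's theorem that determinacy implies completeness of the polynomials in ${\rm{L}}^2(\mu)$), citing \cite{Akhiezer-65}. You instead run the duality argument directly: an annihilating functional $g$ in the dual space makes $z\mapsto\int e^{izx}g(x)\,d\mu(x)$ analytic on a strip (the exponential moment plus Cauchy--Schwarz giving the needed domination), all its derivatives vanish at the origin, so it vanishes identically and $g\,d\mu=0$ by uniqueness of the Fourier transform. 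What your approach buys is a self-contained, elementary proof that treats $p=1$ and $p=2$ uniformly and avoids importing the determinacy machinery; what the paper's approach buys is brevity and a single hypothesis (determinacy) that is the natural invariant here and is reused implicitly elsewhere. The only points needing care in your write-up --- integrability of $\eab$ at the origin via $\beta\geq 1-\frac1\alpha$, and the choice $|\delta|<\epsilon/2$ so that Cauchy--Schwarz applies in the ${\rm{L}}^2$ case --- are both handled correctly.
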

\begin{proof}
Since for  any $\alpha \in (0,1]$, $\beta\geq 1-\frac1\alpha$ and $0<a<1$, $\int_0^{\infty}e^{ax}\eab(x)dx<\infty$, we deduce that $\eab$ is moment determinate and hence by the Hahn-Banach theorem, one gets the first assertion. The last ones follow also by the moment determinacy of the measures combined with the  so-called Nevanlinna parametrization,  see \cite{Akhiezer-65}.	
	
	\end{proof}
\section{Proof of Theorems \ref{thm:main1}, \ref{thm:main2} and Proposition \ref{prop:sequence}} \label{sec:proof2}

The proof of these theorems will be split into several intermediate results. We start with the following, where $\co$, the space of continuous function on $\R_+$ vanishing at infinity is endowed with the uniform topology $||.||_{\infty}$ and  ${\rm{C}}^2_c(\R_+)$ stands for the space of twice continuously differentiable functions on $\R_+$ with compact supports.
\begin{lemma} \label{lem:sem}
There exists $\mathcal{D}_0$ a dense subset of $\co$ such that ${\rm{C}}^2_c(\R_+)\subset \mathcal{D}_0$ and	$(\mathbf{L}_{\alpha,\beta},\mathcal{D}_0)$	 is the generator of a Feller semigroup which is also denoted by $(P_t)_{t\geq0}$.
\end{lemma}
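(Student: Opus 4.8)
The plan is to realize $(P_t)_{t\geq 0}$ via the conjectured selfsimilarity structure rather than by verifying the Hille--Yosida conditions on $\mathbf{L}_{\alpha,\beta}$ directly. First I would construct a $1$-selfsimilar Feller semigroup $(K_t)_{t\geq 0}$ on $\R_+$ whose Lamperti-associated L\'evy-type object is read off from the Bernstein function $\Phi_{\alpha,\beta}$ of \eqref{eq:def_phir}: concretely, one checks that $s\mapsto \Phi_{\alpha,\beta}(s)-\mathrm{d}_{\alpha,\beta}s$ (equivalently the identity \eqref{eq:poly} rewritten as $k\Phi_{\alpha,\beta}(k)-k\,\mathrm{d}_{\alpha,\beta}=\frac{\sin(\alpha\pi)}{\pi}k(k-1)\int_0^1 y^{k-2}g_{\alpha,\beta}(y)\,dy$) is the Laplace exponent of a (possibly killed) subordinator-type generator, so that by Lamperti's correspondence there is a positive selfsimilar Markov process $(\xi_t)_{t\geq 0}$ on $\R_+$ whose generator acts on $f\in \mathrm{C}^2_c(\R_+)$ by exactly the right-hand side of \eqref{eq:EK_der} up to the deterministic time-change. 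The density $g_{\alpha,\beta}$ being positive and integrable against $y^{-2}$ near $0$ is what guarantees that the jump part is a genuine L\'evy kernel; the drift coefficient $\mathrm{d}_{\alpha,\beta}-x$ is linear and inward-pointing for large $x$, which is what will give the Feller (rather than merely sub-Feller) property and conservativeness.

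Next I would set $P_t f(x) := K_{e^t-1} f\circ d_{e^{-t}}(x)$, with $d_c f(x)=f(cx)$, and verify directly from the $1$-selfsimilarity relation $K_{ct}f(cx)=K_t f\circ d_c(x)$ that this is a semigroup: the cocycle computation is routine, using $e^{t+s}-1=(e^t-1)+e^t(e^s-1)$ together with the scaling identity to absorb the factor $e^t$. Strong continuity and the Feller property (maps $\co$ to $\co$, contraction, positivity-preserving, and $P_t 1 = 1$) transfer from $(K_t)$ because $d_c$ is an isometric isomorphism of $\co$ and $t\mapsto e^t-1$ is continuous with $e^0-1=0$. I would then identify the generator: for $f\in \mathrm{C}^2_c(\R_+)$ one differentiates $t\mapsto P_t f(x)$ at $t=0$, getting the generator of $(K_t)$ evaluated at $f$ plus the contribution of the time-change and of $\frac{d}{dt}d_{e^{-t}}f|_{t=0}=-xf'(x)$; collecting terms yields precisely $\mathbf{L}_{\alpha,\beta}f(x)$ as in \eqref{eq:EK_der}. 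This shows $\mathrm{C}^2_c(\R_+)$ lies in the domain and $\mathbf{L}_{\alpha,\beta}$ agrees with the Feller generator there; taking $\mathcal{D}_0$ to be the full Feller domain of $(P_t)_{t\geq 0}$ then gives a dense subset of $\co$ containing $\mathrm{C}^2_c(\R_+)$, as required.

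I expect the main obstacle to be the clean identification of the jump kernel of the Lamperti process with the hypergeometric kernel $g_{\alpha,\beta}$ and, relatedly, checking that $\mathrm{C}^2_c(\R_+)$ is an operator core and is mapped sensibly by $\mathbf{L}_{\alpha,\beta}$ (the integral term $x\int_0^1 f''(xy)g_{\alpha,\beta}(y)\,dy$ must be shown to be well-defined and continuous, vanishing appropriately at $0$ and $\infty$, so that $\mathbf{L}_{\alpha,\beta}f\in\co$). The near-origin behavior of $g_{\alpha,\beta}(y)\sim c\,y^{\beta+\frac1\alpha+1}$ as $y\to 0$, combined with the singularity structure coming from the ${}_2F_1$ as $y\to 1$, must be controlled using \eqref{eq:EK_deriv1}, which rewrites the kernel in the manageable form $\int_0^y r^{\beta+1/\alpha}(1-r^{1/\alpha})^{-\alpha-1}\,dr$; this is exactly the estimate that makes $\frac{h(y)}{y}\,dy$ in Proposition \ref{prop:Kernel} a L\'evy measure, and the same bookkeeping applies here. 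Once the kernel identification and the domain statement are in hand, everything else is soft functional-analytic transfer through the selfsimilar change of variables.
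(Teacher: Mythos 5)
Your proposal follows essentially the same route as the paper's proof: rewrite the kernel via \eqref{eq:EK_deriv1}, invoke the Lamperti-type construction (the paper cites \cite{Bartholme_Patie}) to obtain the $1$-selfsimilar Feller semigroup $(K_t)_{t\geq0}$ generated by $\overline{\mathbf{L}}_{\alpha,\beta}f=\mathbf{L}_{\alpha,\beta}f+xf'$, define $P_tf(x)=K_{e^t-1}f\circ d_{e^{-t}}(x)$, check the cocycle identity, and identify the generator by differentiating at $t=0$. The details you flag (monotonicity and log-integrability of $r\mapsto r^{\beta+1/\alpha}(1-r^{1/\alpha})^{-\alpha-1}$, and the transfer of the Feller property through the time--space change) are exactly the ones the paper verifies.
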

\begin{proof}
	First, note, from \eqref{eq:EK_deriv1},  that for any $f \in C^2_c(\R_+)$, we have
	\begin{eqnarray*}  \label{eq:EK_derive}
		\mathbf{L}_{\alpha,\beta}\:f(x) &=& \lb {\rm{d}}_{\alpha,\beta} -x\rb f'(x)+\frac{\sin(\alpha \pi)x}{\pi}\int_0^1 f''(xy) g_{\alpha,\beta}(y) dy  \\
		&=& \lb \frac{\Gamma( \alpha \beta+\alpha +1)}{\Gamma(\alpha \beta +1)} -x\rb f'(x)+\frac{x}{\Gamma(1-\alpha)}\int_0^1 f''(xy)\int_0^y \frac{r^{\beta+\frac1\alpha} }{(1-r^{\frac{1}{\alpha}})^{\alpha+1}} dr dy
	\end{eqnarray*}
	Then, since the mapping $r \mapsto \overline{g}_{\alpha,\beta}(r)=\frac{r^{\beta+\frac{1}{\alpha}} }{(1-r^{\frac{1}{\alpha}})^{\alpha+1}}$ is positive and non-increasing on $\lbrb{0,1}$ and satisfies $\int_0^1(1\wedge |\log r|) \overline{g}_{\alpha,\beta}(r)dr<\infty $, since
 $\lbrb{1-\lbrb{1-y}^{\frac1\alpha}}^{-\alpha-1}\stackrel{0}{\sim} \alpha^{-\alpha-1}y^{-\alpha-1}$ and $\log\lbrb{1-y}\sim -y$, according to \cite{Bartholme_Patie}, there exists $(K_t)_{t\geq0}$ a  $1$-selfsimilar Feller semigroup on $\R_+$, i.e.~for any $c>0$, $K_{ct}f(cx)=K_t f\circ d_c(x)$ with $d_cf(x)=f(cx)$, such  that $(\overline{\mathbf{L}}_{\alpha,\beta}f,\mathcal{D}_0)$,  with $\overline{\mathbf{L}}_{\alpha,\beta}f(x)= \mathbf{L}_{\alpha,\beta}\:f(x) + xf'(x)$, is its infinitesimal generator.  Next, let us define, for any $t\geq0$, $P_t f(x) = K_{e^t -1} f \circ d_{e^{-t}}(x)$, then  for each $t\geq0$, $P_t$ is plainly linear, with $P_t \co\subseteq \co$. Moreover, since by self-similarity $K_{e^t -1} f \circ d_{e^{-t}}(x) =K_{1-e^{-t}} f(e^{t}x)$, we get that $||P_tf||_{\infty}\leq ||f||_{\infty}$  and $\lim_{t\downarrow 0}P_tf= f$. Next, for any $t,s>0$,
	\begin{eqnarray*}
		P_t P_s f(x) &=& K_{1-e^{-t}}  K_{e^s-1} f \circ d_{e^{-s}}(xe^{t}) = K_{e^s-e^{-t}}  f \circ d_{e^{-s}}(xe^{t})\\  &=& K_{e^{t+s}-1}  f \circ d_{e^{-(t+s)}}(x)= P_{t+s}f(x).
	\end{eqnarray*}	
	Thus $(P_t)_{t\geq0}$ is also a  Feller semigroup. Next, for $f$ a smooth function, we have
	\begin{eqnarray*}
		\lim_{t \to 0 }\frac{P_tf(x)-f(x)}{t}
		&=& \lim_{t \to 0 }\frac{K_{1-e^{-t}}f(x)-f(x)+ K_{1-e^{-t}}f(e^{-t}x)-K_{1-e^{-t}}f(x)}{1-e^{-t}}  \\
		&=& \overline{\mathbf{L}}_{\alpha,\beta}f(x)-xf'(x) =\mathbf{L}_{\alpha,\beta}\:f(x),
	\end{eqnarray*}
	which completes the proof.
\end{proof}
Denoting by ${\rm{P}_n}$  the set of polynomials of order $n$, we have the following.
\begin{lemma} \label{lem:l_pol}
	For any $n \in \N$, $\mathbf{L}_{\alpha,\beta}:{\rm{P}_n}\rightarrow {\rm{P}_n}$. Moreover, $\eab(x)dx,x>0$, is an invariant  measure for the Feller semigroup $(P_t)_{t\geq0}$.  Consequently,  $(P_t)_{t\geq0}$ extends to a contraction semigroup in ${\rm{L}}^p(\eab)$, $1\leq p\leq\infty$.
\end{lemma}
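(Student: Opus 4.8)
Here is how I would prove Lemma~\ref{lem:l_pol}.

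\textbf{Step 1: the action on monomials.} The whole statement will follow from the action of $\mathbf{L}_{\alpha,\beta}$ on the monomials $p_n(x)=x^n$. First I would use \eqref{eq:poly} to evaluate the non-local term, $\frac{\sin(\alpha\pi)}{\pi}x\int_0^1 p_n''(xy)g_{\alpha,\beta}(y)\,dy=(n\Phi_{\alpha,\beta}(n)-n\mathrm{d}_{\alpha,\beta})x^{n-1}$, and combine it with $(\mathrm{d}_{\alpha,\beta}-x)p_n'(x)=n\mathrm{d}_{\alpha,\beta}x^{n-1}-np_n(x)$ to obtain the clean recursion $\mathbf{L}_{\alpha,\beta}\,p_n=n\,\Phi_{\alpha,\beta}(n)\,p_{n-1}-n\,p_n$ for all $n\in\N$. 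In particular $\mathbf{L}_{\alpha,\beta}$ is bidiagonal in the monomial basis, so $\mathbf{L}_{\alpha,\beta}\colon{\rm P}_n\to{\rm P}_n$ and $\mathbf{L}_{\alpha,\beta}|_{{\rm P}_n}$ is a finite-dimensional operator with spectrum $\{0,-1,\dots,-n\}$, generating the matrix semigroup $(e^{t\mathbf{L}_{\alpha,\beta}|_{{\rm P}_n}})_{t\ge0}$ on ${\rm P}_n$.

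\textbf{Step 2: invariance of $\eab(x)dx$.} I would record from \eqref{eq:def_e} (equivalently from \eqref{eq:MellinIdentity}) that the moment function $\M_{\eab}(s):=\int_0^\infty x^s\eab(x)\,dx$ is proportional to $\Gamma(\alpha s+\alpha\beta+1)/\Gamma(\alpha\beta+1)$, so the gamma functional equation recalled in the previous section reads $\M_{\eab}(s)=\Phi_{\alpha,\beta}(s)\,\M_{\eab}(s-1)$; together with the recursion of Step~1 this yields, for every polynomial $q$, that $\int_0^\infty\mathbf{L}_{\alpha,\beta}q(x)\,\eab(x)\,dx=0$ (on $q=p_n$ the two contributions $n\Phi_{\alpha,\beta}(n)\M_{\eab}(n-1)$ and $-n\M_{\eab}(n)$ cancel). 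To upgrade this to genuine invariance of the Feller semigroup I would use the representation of $(P_t)$ through the conservative $1$-self-similar Feller process $(X_t,\mathbb{P}_x)$ of \reflemma{lem:sem}, for which the polynomial moments $P_tp_n(x)=\mathbb{E}_x[X_t^n]$ are finite for every $n$ (a feature of such processes; cf.~\cite{Bartholme_Patie}); then Dynkin's formula shows $P_tp_n\in{\rm P}_n$ solves $\tfrac{d}{dt}u_t=\mathbf{L}_{\alpha,\beta}u_t$, $u_0=p_n$, i.e.~$P_t|_{{\rm P}_n}=e^{t\mathbf{L}_{\alpha,\beta}|_{{\rm P}_n}}$ (the case $n=0$ giving $P_t1=1$). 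Hence $\tfrac{d}{dt}\int P_tp_n\,d\eab=\int\mathbf{L}_{\alpha,\beta}(P_tp_n)\,d\eab=0$, so $\int P_tp_n\,d\eab=\M_{\eab}(n)$ for all $t\ge0$ and all $n$; the finite measure $f\mapsto\int P_tf\,d\eab$ thus has the same moments as $\eab$, and since $\eab$ is moment determinate (it possesses an exponential moment, cf.~the proof of \reflemma{lem:dense_pol}) the two coincide. A monotone-class argument then extends $\int P_tf\,d\eab=\int f\,d\eab$ from $f\in\cb$ to all bounded Borel $f$.

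\textbf{Step 3: the $L^p$-extension.} This is then routine: for $f\in\cb$ and $1\le p<\infty$, Jensen's inequality for the probability kernel $P_t(x,\cdot)$ gives $|P_tf(x)|^p\le P_t(|f|^p)(x)$ pointwise, and integrating against $\eab$ and using the invariance just proved yields $\|P_tf\|_{L^p(\eab)}\le\|f\|_{L^p(\eab)}$; the bound for $p=\infty$ and the fact that $P_t$ sends $\eab$-null functions to $\eab$-null functions are immediate. Since $\eab$ is a finite measure on $\R_+$, $\co$ is dense in $L^p(\eab)$ for $p<\infty$, so $(P_t)_{t\ge0}$ extends uniquely to a contraction semigroup on $L^p(\eab)$ for every $1\le p\le\infty$, strong continuity for $p<\infty$ being inherited from the uniform convergence $P_tf\to f$ for $f\in\co$.

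The step I expect to be the main obstacle is the assertion that $P_t$ maps each ${\rm P}_n$ into itself. The Feller generator has a $C_0$-valued core contained in $C^2_c(\R_+)$, which does not contain polynomials, so one cannot apply the semigroup to $p_n$ inside the Feller framework without first knowing a priori that the moments $\mathbb{E}_x[X_t^n]$ are finite (equivalently, suitable integrability of the Lamperti exponent behind the self-similar process of \reflemma{lem:sem}); only then is Dynkin's formula available to justify the ODE above. An alternative, more computational route would be to verify directly that the formal $\eab$-adjoint of $\mathbf{L}_{\alpha,\beta}$ annihilates $\eab$ on $C^2_c(\R_+)$ and that $C^2_c(\R_+)$ is a core for the generator; this bypasses the moment bound but is noticeably messier, and I would favour the moment-determinacy argument.
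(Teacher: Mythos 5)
Your proof is correct, and its first half coincides with the paper's: the paper likewise computes $\mathbf{L}_{\alpha,\beta}\,p_k=k\Phi_{\alpha,\beta}(k)p_{k-1}-kp_k$ from \eqref{eq:poly} and then verifies $\int_0^\infty \mathbf{L}_{\alpha,\beta}\,p_k(x)\eab(x)dx=0$ by exactly the cancellation of gamma factors you describe. Where you diverge is in the two remaining steps, and in both cases your route is defensible and, for the invariance step, more complete. The paper stops at the identity $\int_0^\infty\mathbf{L}_{\alpha,\beta}f(x)\eab(x)dx=0$ for $f$ in the span of the polynomials (dense in ${\rm{L}}^1(\eab)$) and treats the invariance of $\eab(x)dx$ for the Feller semigroup as following at once; you correctly flag that this passage is not automatic, since polynomials do not lie in the Feller core $\mathcal{D}_0\subset\co$, and you supply the missing bridge via the self-similar representation of \reflemma{lem:sem}, finiteness of the polynomial moments of the underlying process, Dynkin's formula giving $P_t|_{{\rm{P}}_n}=e^{t\mathbf{L}_{\alpha,\beta}|_{{\rm{P}}_n}}$ (in particular $P_t1=1$), and finally the moment determinacy of $\eab$ (already exploited in \reflemma{lem:dense_pol}) to identify the image measure with $\eab$. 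This buys a fully justified invariance statement at the price of importing moment bounds from \cite{Bartholme_Patie}. For the ${\rm{L}}^p$ extension the paper extends $P_t$ to contractions of ${\rm{L}}^1(\eab)$ and ${\rm{L}}^{\infty}(\eab)$ and invokes Marcinkiewicz interpolation, whereas you apply Jensen's inequality to the kernel $P_t(x,\cdot)$ directly; the two arguments are equivalent here, yours needing only the conservativity $P_t1=1$ that your Step 2 establishes, and being slightly more elementary.
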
		
\begin{proof}
	First, observe that for any $k\geq 1$ (the case $k=0$ is obvious), writing $p_k(x)=x^k$, we have
	\begin{eqnarray} \label{eq:action_pol}
	\mathbf{L}_{\alpha,\beta}\:p_k(x) &=& \lb {\rm{d}}_{\alpha,\beta} -x\rb k \: p_{k-1}(x)+ \frac{\sin(\alpha \pi)}{\pi}k(k-1)p_{k-1}(x)\int_0^1 y^{k-2} g_{\alpha,\beta}(y) dy \nonumber \\  &=& k \Phi_{\alpha,\beta}(k) p_{k-1}(x) - kp_k(x),
	\end{eqnarray}
	where we used the relation \eqref{eq:poly}. By linearity, this proves the first claim.  As, for all $n\geq 0$, ${\rm{P}_n}\subseteq {\rm{L}}^1(\eab)$, we get, from Lemma \ref{lem:dense_pol}, that $\mathbf{L}_{\alpha,\beta}$ may be extended to a linear operator acting on ${\rm{Span}(P_n)}$, a dense subset of ${\rm{L}}^1(\eab)$. Next, for any $k \geq 1$ (the case $k=0$ is again obvious), we deduce, from \eqref{eq:action_pol}, the following
	\begin{eqnarray*} \label{eq:action_poln}
		\int_0^{\infty}\mathbf{L}_{\alpha,\beta}\:p_k(x)\eab(x)dx &=& k \Phi_{\alpha,\beta}(k)\int_0^{\infty}p_{k-1}(x)\eab(x) dx - k\int_0^{\infty}p_k(x)\eab(x)dx\\
		&=& k \Phi_{\alpha,\beta}(k)\frac{\Gamma(\alpha (k-1)+\alpha \beta +1)}{\Gamma(\alpha \beta +1)}- k\frac{\Gamma(\alpha k+\alpha \beta +1)}{\Gamma(\alpha \beta +1)}\\
		&=& k \frac{\Gamma(\alpha k+\alpha \beta +1)}{\Gamma(\alpha \beta +1)}- k\frac{\Gamma(\alpha k+\alpha \beta +1)}{\Gamma(\alpha \beta +1)}\\
		&=&0.
	\end{eqnarray*}
	Then,  by linearity and the discussion above,  we get that $\int_0^{\infty}\mathbf{L}_{\alpha,\beta}\:f(x)\eab(x)dx =0$, for
	any $f\in {\rm{Span}(P_n)}$ a dense subset of ${\rm{L}}^1(\eab)$,	which completes the statement about the existence of an invariant measure. A classical argument shows that the Feller semigroup $ (P_t)_{t\geq0}$ extends to a contraction semigroup in  ${\rm{L}}^1(\eab)$ and ${\rm{L}}^{\infty}(\eab)$, and, by Marcinkiewickz's interpolation Theorem to  ${\rm{L}}^p(\eab)$, $1\leq p\leq\infty$, see \cite{Stein}.
	\end{proof}

\begin{lemma}  \label{lem:eigen}
	Let $n\in \N$.  Then, $ \mathcal{P}_n \in \lnua$  and for any  $x\geq0$  we have
	\begin{equation} \label{eq:eigenf}
	\mathbf{L}_{\alpha,\beta}\:\mathcal{P}_n(x) = - n\mathcal{P}_n(x).
	\end{equation}
	Consequently, for any $n \in \N$ and $x\geq0$, we have
	\begin{equation} \label{eq:poly_inter}
	\Lambda_{\alpha,\beta}\: \mathcal{L}_n(x)=\mathcal{P}_n(x),
	\end{equation}
	and, the intertwining relation \eqref{eq:inter} holds.
\end{lemma}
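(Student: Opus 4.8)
The plan is to verify the three displayed identities in turn and then lift the last of them to the operator identity \eqref{eq:inter} by a density argument.

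First, $\mathcal{P}_n$ is a polynomial, so it belongs to $\lnua$ since $\eab$ has finite moments of every order (indeed finite exponential moments, cf.\ the proof of Lemma~\ref{lem:dense_pol}); in particular $\mathcal{P}_n\in\mathrm{P}_n$, the space on which $\mathbf{L}_{\alpha,\beta}$ acts through the explicit formula \eqref{eq:action_pol}. Writing $\mathcal{P}_n=\Gamma(\alpha\beta+1)\sum_{k=0}^{n}(-1)^{k}\binom{n}{k}\Gamma(\alpha k+\alpha\beta+1)^{-1}p_{k}$ and applying \eqref{eq:action_pol} termwise, I would shift the summation index in the contributions $k\Phi_{\alpha,\beta}(k)p_{k-1}$ and then check, using the functional equation $\Phi_{\alpha,\beta}(k+1)=\Gamma(\alpha(k+1)+\alpha\beta+1)/\Gamma(\alpha k+\alpha\beta+1)$ coming from \eqref{eq:def_phir}, that for each $0\le k\le n$ the coefficient of $p_k$ in $\mathbf{L}_{\alpha,\beta}\mathcal{P}_n$ equals $-n$ times the coefficient of $p_k$ in $\mathcal{P}_n$; this gives \eqref{eq:eigenf}. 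For \eqref{eq:poly_inter}, I would combine the identity $\mathcal{L}_n(x)=\mathcal{L}_n^{(0)}(x)=\sum_{k=0}^{n}(-1)^{k}\binom{n}{k}x^{k}/k!$ (the case $\beta=0$ of \eqref{eq:def_lag_b}) with the eigenrelation $\Lambda_{\alpha,\beta}p_k=\M_{\lambda_{\alpha,\beta}}(k)p_k=\frac{\Gamma(k+1)\Gamma(\alpha\beta+1)}{\Gamma(\alpha k+\alpha\beta+1)}p_k$ from \eqref{eq:poly_kern}: the factors $\Gamma(k+1)/k!=1$ cancel and the sum collapses to $\mathcal{P}_n$ by the definition \eqref{eq:Pn}.

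For the intertwining relation \eqref{eq:inter}, the idea is that both $P_t\Lambda_{\alpha,\beta}$ and $\Lambda_{\alpha,\beta}Q_t$ are bounded operators from $\Lg$ into $\lnua$ — the former because $P_t$ is a contraction on $\lnua$ (Lemma~\ref{lem:l_pol}) and $\Lambda_{\alpha,\beta}\colon\Lg\to\lnua$ is bounded (Theorem~\ref{thm:main2}), the latter because $Q_t$ is a contraction on $\Lg$ — so it is enough to show that they agree on the total set $\{\mathcal{L}_n\}_{n\ge0}$. For this, recall that $\mathcal{L}_n$ is an eigenfunction of the Laguerre semigroup of order $0$, $Q_t\mathcal{L}_n=e^{-nt}\mathcal{L}_n$, while \eqref{eq:eigenf} together with the invariance of the finite-dimensional space $\mathrm{P}_n$ under $\mathbf{L}_{\alpha,\beta}$ (Lemma~\ref{lem:l_pol}) forces $P_t$ to restrict on $\mathrm{P}_n$ to the matrix exponential of $\mathbf{L}_{\alpha,\beta}$, whence $P_t\mathcal{P}_n=e^{-nt}\mathcal{P}_n$. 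Combining this with \eqref{eq:poly_inter} gives $P_t\Lambda_{\alpha,\beta}\mathcal{L}_n=P_t\mathcal{P}_n=e^{-nt}\mathcal{P}_n=\Lambda_{\alpha,\beta}(e^{-nt}\mathcal{L}_n)=\Lambda_{\alpha,\beta}Q_t\mathcal{L}_n$; since the linear span of $(\mathcal{L}_n)$ is $\mathcal{A}$, which is dense in $\Lg$ by Lemma~\ref{lem:dense_pol}, the two bounded operators coincide on all of $\Lg$.

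The genuinely delicate point is the identity $P_t\mathcal{P}_n=e^{-nt}\mathcal{P}_n$, which requires linking the Feller (and $\mathrm{L}^2(\eab)$) semigroup constructed in Lemma~\ref{lem:sem} with the purely algebraic action of $\mathbf{L}_{\alpha,\beta}$ on polynomials, as $\mathcal{A}\not\subset\co$. I would establish it by applying Dynkin's formula to the underlying Feller process — the required integrability of $p(X_s)$ along the path being supplied by the invariant measure of Lemma~\ref{lem:l_pol} — to obtain $P_tp=p+\int_0^t P_s\mathbf{L}_{\alpha,\beta}p\,ds$ for every $p\in\mathcal{A}$; restricting this integral equation to the $\mathbf{L}_{\alpha,\beta}$-invariant subspace $\mathrm{P}_n$ identifies $(P_t|_{\mathrm{P}_n})_{t\ge0}$ with $(e^{t\mathbf{L}_{\alpha,\beta}|_{\mathrm{P}_n}})_{t\ge0}$, and then \eqref{eq:eigenf} yields the claim. (Once it is known that $\mathcal{A}$ is a core for $\mathbf{L}_{\alpha,\beta}$, this is immediate from $\frac{d}{dt}P_t\mathcal{P}_n=P_t\mathbf{L}_{\alpha,\beta}\mathcal{P}_n=-nP_t\mathcal{P}_n$.)
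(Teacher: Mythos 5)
Your proposal is correct and follows essentially the same route as the paper: termwise application of \eqref{eq:action_pol} with the functional equation for $\Phi_{\alpha,\beta}$ and the binomial identity $\binom{n}{k+1}(k+1)+\binom{n}{k}k=n\binom{n}{k}$ for \eqref{eq:eigenf}, the diagonal action \eqref{eq:poly_kern} for \eqref{eq:poly_inter}, and agreement on the Laguerre polynomials plus density and boundedness for \eqref{eq:inter}. The only difference is that where the paper simply deduces $P_t\mathcal{P}_n=e^{-nt}\mathcal{P}_n$ from \eqref{eq:eigenf} and the Cauchy problem \eqref{eq:cauchy}, you spell out a justification (Dynkin's formula on the $\mathbf{L}_{\alpha,\beta}$-invariant finite-dimensional space $\mathrm{P}_n$, with integrability from the invariant measure), which is a legitimate and arguably more careful treatment of a step the paper leaves implicit.
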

\begin{remark}
We note that when $\alpha=1$, then $\mathcal{P}_n(x) = \mathcal{L}^{(\beta)}_n(x)$ yielding easily to the characterization of $\mathbf{L}_{\beta}=\mathbf{L}_{1,\beta}$ as the Laguerre differential operator.
\end{remark}
\begin{proof}
The first statement is obvious.	Next observe, from  \eqref{eq:action_pol}, that for any $n \in \N$, writing $\Gamma(\alpha \beta +1)\overline{\mathcal{P}}_n(x)=\mathcal{P}_n(x)$,  with $\mathcal{P}_n$ defined in \eqref{eq:Pn},
	\begin{eqnarray*}
		\mathbf{L}_{\alpha,\beta}\:\overline{\mathcal{P}}_n(x) &=& \sum_{k=0}^n (-1)^k\frac{{ n \choose k}}{\Gamma(\alpha k + \alpha \beta +1)} \mathbf{L}_{\alpha,\beta}\:p_{k}(x) \\
		&=& \lb -\sum_{k=0}^{n-1} (-1)^k\frac{{ n \choose k+1} (k+1) \Phi_{\alpha,\beta}(k+1)}{\Gamma(\alpha (k+1) + \alpha \beta +1)}  x^k - \sum_{k=1}^n (-1)^k\frac{{ n \choose k} k}{\Gamma(\alpha k + \alpha \beta +1)}    x^{k}\rb  \\
		&=& \frac{-n}{\Gamma(\alpha \beta +1)}+ \lb -\sum_{k=1}^{n-1} (-1)^k\frac{{ n \choose k+1}(k+1)+{ n \choose k} k}{\Gamma(\alpha k + \alpha \beta +1)}  x^k - \frac{(-1)^n n}{\Gamma(\alpha n + \alpha \beta +1)}    x^{n}\rb  \\
		&=& -n \overline{\mathcal{P}}_n(x),
	\end{eqnarray*}
	where we used \eqref{eq:def_phir} for the third equality,  and, for the last one the identity  ${ n \choose k+1}(k+1)+{ n \choose k} k = n { n \choose k}$. This proves \eqref{eq:eigenf}.
		The identity \eqref{eq:poly_inter} follows easily from the definition of the polynomials and the relation \eqref{eq:poly_kern}.  We deduce from \eqref{eq:eigenf} and the Cauchy problem \eqref{eq:cauchy},  that, for all $t\geq0$,
		\begin{equation} \label{eq:eig_pt}
		P_t \mathcal{P}_n(x) = e^{-nt} \mathcal{P}_n(x).
		\end{equation}
		Then, for any $n \in \N$ and $t\geq 0$, we get, from \eqref{eq:eig_pt},  that
		\begin{equation*}
		\Lambda_{\alpha,\beta}\: Q_t \mathcal{L}_n(x) = e^{-nt}\Lambda_{\alpha,\beta}\:\mathcal{L}_n(x) = P_t \Lambda_{\alpha,\beta}\: \mathcal{L}_n(x).
		\end{equation*}
		Since the operators are linear, we get that $\Lambda_{\alpha,\beta}\: Q_t f(x) =  P_t \Lambda_{\alpha,\beta}\: f(x)$, for all $f \in {\rm{Span}}(\mathcal{L}_n)$. By continuity of the involved operators in the appropriate Hilbert spaces, we get that the identity holds on $\overline{{\rm{Span}}}(\mathcal{L}_n)$ and hence from Lemma \ref{lem:dense_pol} on $\Lg$.
\end{proof}
As a consequence of the intertwining relation, we derive the following.
\begin{lemma} \label{lem:uniq}
	$\eab(x)dx,x>0$ is the unique invariant measure of the Feller semigroup $(P_t)_{t\geq0}$.
\end{lemma}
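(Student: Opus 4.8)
The plan is to exploit the intertwining relation \eqref{eq:inter} together with the well-known uniqueness of the invariant measure for the classical Laguerre semigroup $(Q_t)_{t\geq 0}$ of order $0$. Suppose $\mu(dx)$ is an invariant probability measure for $(P_t)_{t\geq 0}$, i.e.~$\int_0^\infty P_t f\, d\mu = \int_0^\infty f\, d\mu$ for all $f \in B_b(\R_+)$ and $t\geq 0$. First I would observe that since $\mathcal{A}$ is a core for $\mathbf{L}_{\alpha,\beta}$ and $\mathbf{L}_{\alpha,\beta}$ preserves each $\mathrm{P}_n$ with $\mathbf{L}_{\alpha,\beta}\, p_k = k\Phi_{\alpha,\beta}(k) p_{k-1} - k p_k$ by \eqref{eq:action_pol}, invariance of $\mu$ forces $\int_0^\infty \mathbf{L}_{\alpha,\beta} f\, d\mu = 0$ for every polynomial $f$, which yields a recursion determining all the moments $m_k = \int_0^\infty x^k \mu(dx)$ in terms of $m_0 = 1$: indeed $\int_0^\infty \mathbf{L}_{\alpha,\beta} p_k\, d\mu = k\Phi_{\alpha,\beta}(k) m_{k-1} - k m_k = 0$ gives $m_k = \Phi_{\alpha,\beta}(k) m_{k-1}$, hence $m_k = \prod_{j=1}^k \Phi_{\alpha,\beta}(j) = \frac{\Gamma(\alpha k + \alpha\beta+1)}{\Gamma(\alpha\beta+1)}$, which are precisely the moments of $\eab(x)dx$. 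Since by Lemma~\ref{lem:dense_pol} the measure $\eab$ is moment-determinate, $\mu(dx) = \eab(x)dx$, and uniqueness follows.

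Alternatively, and perhaps more in the spirit of ``as a consequence of the intertwining relation,'' I would argue as follows. Let $\mu$ be any invariant probability measure for $P$. Using \eqref{eq:eig_pt}, namely $P_t \mathcal{P}_n = e^{-nt}\mathcal{P}_n$, invariance gives $\int_0^\infty \mathcal{P}_n\, d\mu = \int_0^\infty P_t \mathcal{P}_n\, d\mu = e^{-nt}\int_0^\infty \mathcal{P}_n\, d\mu$ for all $t\geq 0$, which for $n\geq 1$ forces $\int_0^\infty \mathcal{P}_n\, d\mu = 0$; for $n=0$ one has $\mathcal{P}_0 \equiv 1$ so $\int_0^\infty \mathcal{P}_0\, d\mu = 1$. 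Since the $(\mathcal{P}_n)_{n\geq 0}$ form a triangular (degree-graded) family of polynomials with nonzero leading coefficients, the moments of $\mu$ are recursively pinned down: $\int_0^\infty \mathcal{P}_n\, d\mu = 0$ together with the known value of $\int_0^\infty \mathcal{P}_k\, d\mu$ for $k<n$ determines $m_n$ uniquely, and one checks these are the moments of $\eab$ — indeed $\int_0^\infty \mathcal{P}_n(x)\eab(x)dx = \spnu{\mathcal{P}_n,\mathcal{R}_0}{\eab} = \delta_{n0}$ by the biorthogonality \eqref{eq:def_bio} since $\mathcal{R}_0 \equiv 1$, so $\eab$ satisfies the same system. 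Moment-determinacy of $\eab$ (Lemma~\ref{lem:dense_pol}) then gives $\mu = \eab\,dx$.

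I expect the main (minor) obstacle to be a small measure-theoretic subtlety rather than a deep one: one must ensure that polynomials are $\mu$-integrable so that the moment recursion makes sense, i.e.~that $\mu$ has moments of all orders. This can be handled by a truncation/monotone-convergence argument applied to $P_t p_k$, or by first establishing that $\mu$ has exponential moments via the Lamperti representation $P_t f(x) = K_{e^t-1} f\circ d_{e^{-t}}(x)$ from Lemma~\ref{lem:sem} and the corresponding self-similar structure; alternatively one can invoke that any invariant measure of a Feller semigroup whose generator maps the polynomial algebra into itself with the triangular action \eqref{eq:action_pol} automatically integrates polynomials by testing against $\mathbf{L}_{\alpha,\beta} p_k$ inductively. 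Once integrability is in place, the recursion and moment-determinacy close the argument immediately.
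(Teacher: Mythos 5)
Your strategy is genuinely different from the paper's. The paper takes an arbitrary invariant measure $\nu$, pushes it forward through the Markov operator, observes via the intertwining relation \eqref{eq:inter} (tested only against $f\in\co$, so no integrability issues arise) that $\nu\Lambda_{\alpha,\beta}$ is an invariant probability measure for the Laguerre semigroup, hence equals $\e(x)dx$ by classical uniqueness, and then removes $\Lambda_{\alpha,\beta}$ by noting that the multiplier $\M_{\lambda_{\alpha,\beta}}$ in \eqref{eq:def_Mellin_X} is zero-free, so that $\M_{\nu}=\M_{\eab}$ on the critical line and $\nu=\eab(x)dx$. That route uses only bounded test functions and Fourier--Mellin uniqueness, never moments.

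Your route has a genuine gap exactly at the point you flag, and the fixes you sketch do not close it. The recursion $m_k=\Phi_{\alpha,\beta}(k)m_{k-1}$ (or the system $\int\mathcal{P}_n\,d\mu=\delta_{n0}$) only identifies $\mu$ with $\eab(x)dx$ \emph{after} you know that $\mu$ integrates all polynomials, and an invariant probability measure of a Feller semigroup need not do so a priori. Truncation plus monotone convergence only yields $\int P_t p_k\,d\mu=\int p_k\,d\mu$ as an identity in $[0,\infty]$; writing $P_tp_k=e^{-kt}p_k+q_{k-1,t}$ with $\deg q_{k-1,t}\le k-1$ and assuming inductively that $m_0,\dots,m_{k-1}<\infty$, the identity becomes $m_k=e^{-kt}m_k+\int q_{k-1,t}\,d\mu$, which is satisfied trivially when $m_k=\infty$; so neither the truncation argument nor ``testing against $\mathbf{L}_{\alpha,\beta}p_k$ inductively'' forces finiteness. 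To rescue your argument you would need a genuine Lyapunov/drift estimate (e.g.\ exploiting $\mathbf{L}_{\alpha,\beta}p_1(x)={\rm{d}}_{\alpha,\beta}-x$ and a Dynkin-formula argument with stopping times to get $\int x\,\mu(dx)\le {\rm{d}}_{\alpha,\beta}$, then induct using \eqref{eq:action_pol}), which is a nontrivial addition; alternatively, adopt the paper's device of transporting the problem to the Laguerre semigroup, where uniqueness is already known and no moment information about $\mu$ is required. The remainder of your argument (the moments of $\eab$, the biorthogonality check, and the moment-determinacy of $\eab$ from Lemma \ref{lem:dense_pol}) is correct.
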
	
\begin{proof}
Since $\co\subset \Le$, \eqref{eq:inter} holds also on $\co$. Next assume that there exists a measure  $\nu(dx) \neq \eab(x)dx$ such that for all $f\in \co$, $\nu P_t f = \nu f := \int_0^{\infty}f(x)\nu(dx)$. Since by dominated convergence, one has for any $f\in \co$, $\Lambda_{\alpha,\beta} f \in \co$, we get from the intertwining  relation, that
\[ \nu \Lambda_{\alpha,\beta} Q_t f = \nu  \Lambda_{\alpha,\beta} f, \]
that is $\bar{\nu}(x)dx=\int_{0}^{\infty}\lambda_{\alpha,\beta}(x/y)\frac{\nu(dy)}{y}dx$ is an invariant measure for  $(Q_t)_{t\geq0}$, and, thus by uniqueness of its invariant measure, we must have $\bar{\nu}(x)=e^{-x},\forall x>0$. This completes the proof by an appeal to a contradiction argument since from \eqref{eq:MellinIdentity} and  the multiplier $\mathcal{M}_{\lambda_{\alpha,\beta}}$ in \eqref{eq:def_Mellin_X} being zero free on $\Cb_{\lbrb{-1,\infty}}$, we get $\mathcal{M}_{\eab}=\mathcal{M}_\nu$ and thus $\nu(dx) = \eab(x)dx$.
	\end{proof}		
We say now that, for $n\in \N$,  $\mathcal{R}_n$ is a co-eigenfunction for $P_t$ associated to the eigenvalues $e^{-nt}$ if $\mathcal{R}_n \in \lnua$ and for any $f \in \lnua$,
\[ \spnu{P_tf,\mathcal{R}_n}{\eab}=e^{-nt}\spnu{f,\mathcal{R}_n}{\eab}.\]
We denote by $\Lambda^*_{\alpha,\beta}\:$ the adjoint of $\Lambda_{\alpha,\beta}\:$ in $\lnua$, i.e.~for any $f \in \lnua$ and $g\in \Le$, $\spnu{\Lambda^*_{\alpha,\beta}\:f,g}{\e}=\spnu{f,\Lambda_{\alpha,\beta}\:g}{\eab}$.
\begin{lemma}  \label{lem:co-eigen}
 For any $f \in \lnua$, we have for $a.e.~ x>0$,
\[ \e(x)\Lambda^*_{\alpha,\beta}\:f(x)=  \int_0^{\infty}f(xy)\eab(xy)\lambda_{\alpha,\beta}\:(1/y)\frac{dy}{y}. \]
Moreover, ${\rm{Ker}}(\Lambda^*_{\alpha,\beta})= \{\emptyset\}$ and for any $n\in \N$, the equation
\begin{equation} \label{eq:eq_rn}
\Lambda^*_{\alpha,\beta}\:f_n(x) = \mathcal{L}_n(x)
\end{equation}
has an unique solution in $\lnua$ given by $\mathcal{R}_n(x)=\frac{(-1)^n}{n!\eab(x)}(x^n \eab(x))^{(n)}$. Moreover, for all $n,t\geq0$, $\mathcal{R}_n$ is a co-eigenfunction of $P_t$ associated to the eigenvalue $e^{-nt}$ and $(P_t)_{t\geq 0}$ is non-self-adjoint. Finally, $\overline{{\rm{Ran}}}(\Lambda^*_{\alpha,\beta})= \Le$.
\end{lemma}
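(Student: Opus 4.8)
The strategy is to derive every assertion from the intertwining relation~\eqref{eq:inter}, its Hilbert space adjoint, and the explicit representation~\eqref{eq:rep_pol} of $\Rn$, with the injectivity of $\Lambda^*_{\alpha,\beta}$ playing the role of the main tool. First I would establish the kernel formula. Fix $f\in\lnua$ and $g\in\Le$; by the definition of the adjoint,
\[
\spnu{\Lambda^*_{\alpha,\beta}f,g}{\e}=\spnu{f,\Lambda_{\alpha,\beta}g}{\eab}=\int_0^\infty f(x)\lbrb{\int_0^\infty g(xy)\lambda_{\alpha,\beta}(y)\,dy}\eab(x)\,dx.
\]
Replacing $f,g$ by $|f|,|g|$, the contraction property of $\Lambda_{\alpha,\beta}\colon\Le\to\lnua$ (\refprop{prop:Kernel}) together with the Cauchy--Schwarz inequality shows that this iterated integral is absolutely convergent; after the change of variable $u=xy$ in the inner integral, Fubini's theorem yields
\[
\spnu{\Lambda^*_{\alpha,\beta}f,g}{\e}=\int_0^\infty g(u)\lbrb{\int_0^\infty f(x)\,\lambda_{\alpha,\beta}(u/x)\,\eab(x)\,\frac{dx}{x}}du,
\]
and, as this holds for all $g\in\Le$, we may identify $\e(u)\Lambda^*_{\alpha,\beta}f(u)$ with the inner integral for a.e.~$u>0$; the substitution $x=uy$ then produces the stated formula. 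Injectivity of $\Lambda^*_{\alpha,\beta}$ is now automatic: $\mathrm{Ker}(\Lambda^*_{\alpha,\beta})=\mathrm{Ran}(\Lambda_{\alpha,\beta})^{\perp}=\overline{\mathrm{Ran}}(\Lambda_{\alpha,\beta})^{\perp}=\{0\}$ since $\overline{\mathrm{Ran}}(\Lambda_{\alpha,\beta})=\lnua$ by~\refprop{prop:Kernel}.

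The computational core of the lemma is the identity $\Lambda^*_{\alpha,\beta}\Rn=\mathcal{L}_n$. Since $\Rn\in\lnua$ by~\eqref{eq:rep_pol}, the function $\Lambda^*_{\alpha,\beta}\Rn$ is a well-defined element of $\Le$, and it suffices to identify it against the monomials $p_k(x)=x^k$, which are total in $\Le$ by~\reflemma{lem:dense_pol}. Using~\eqref{eq:poly_kern} and $\wn=\Rn\eab$,
\[
\spnu{\Lambda^*_{\alpha,\beta}\Rn,p_k}{\e}=\spnu{\Rn,\Lambda_{\alpha,\beta}p_k}{\eab}=\M_{\lambda_{\alpha,\beta}}(k)\int_0^\infty x^k\wn(x)\,dx=\M_{\lambda_{\alpha,\beta}}(k)\,\M_{\wn}(k),
\]
so the claim reduces to the scalar identity $\M_{\lambda_{\alpha,\beta}}(k)\,\M_{\wn}(k)=\spnu{\mathcal{L}_n,p_k}{\e}$ for all $k\in\N$. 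The right member follows at once from the Rodrigues representation of $\mathcal{L}_n=\mathcal{L}^{(0)}_n$, and the left member from~\eqref{eq:def_Mellin_X} and the Mellin transform of $\wn$ — read off from~\eqref{eq:MellinInv_r}, or obtained by $n$ successive integrations by parts in~\eqref{eq:w_n}, the boundary terms vanishing because $\beta+1/\alpha>0$; both sides then reduce to the same multiple of $(k!)^2/\lb n!\,(k-n)!\rb$ when $k\ge n$ and to $0$ otherwise. This $\Gamma$-function bookkeeping is the one genuinely computational point and the step I expect to be the main obstacle. Uniqueness of the solution of~\eqref{eq:eq_rn} in $\lnua$ then follows immediately from the injectivity established above.

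For the co-eigenfunction property, I would take Hilbert space adjoints in~\eqref{eq:inter}: all operators being bounded and $Q_t=Q^{(0)}_t$ being self-adjoint in $\Le$, this gives $\Lambda^*_{\alpha,\beta}P^*_t=Q_t\Lambda^*_{\alpha,\beta}$ on $\lnua$. Evaluating at $\Rn$ and using $\Lambda^*_{\alpha,\beta}\Rn=\mathcal{L}_n$ together with $Q_t\mathcal{L}_n=e^{-nt}\mathcal{L}_n$ (the case $\beta=0$ of~\eqref{eq:expansionLaguerre}), one gets $\Lambda^*_{\alpha,\beta}\lbrb{P^*_t\Rn-e^{-nt}\Rn}=0$, hence $P^*_t\Rn=e^{-nt}\Rn$ by injectivity; equivalently $\spnu{P_tf,\Rn}{\eab}=e^{-nt}\spnu{f,\Rn}{\eab}$ for every $f\in\lnua$, so $\Rn$ is a co-eigenfunction of $P_t$ for the eigenvalue $e^{-nt}$. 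If $(P_t)_{t\geq0}$ were self-adjoint in $\lnua$, then $\mathcal{P}_1$ and $\mathcal{P}_2$, which by~\eqref{eq:eig_pt} are eigenfunctions for the distinct eigenvalues $e^{-t}$ and $e^{-2t}$, would be orthogonal in $\lnua$; but a direct computation with the explicit polynomials~\eqref{eq:Pn} shows $\spnu{\mathcal{P}_1,\mathcal{P}_2}{\eab}\neq0$ for $\alpha\in(0,1)$, so $(P_t)_{t\geq0}$ is non-self-adjoint. Finally, $\mathcal{L}_n\in\mathrm{Ran}(\Lambda^*_{\alpha,\beta})$ for every $n\in\N$ by the preceding identity, so $\mathrm{Ran}(\Lambda^*_{\alpha,\beta})$ contains $\mathrm{Span}\lbcurlyrbcurly{\mathcal{L}_n:\,n\in\N}=\Ae$, which is dense in $\Le$ by~\reflemma{lem:dense_pol}; therefore $\overline{\mathrm{Ran}}(\Lambda^*_{\alpha,\beta})=\Le$.
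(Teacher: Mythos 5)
Your proposal reproduces the paper's architecture for the kernel formula (Fubini after checking absolute convergence via the contraction property), for the injectivity ${\rm{Ker}}(\Lambda^*_{\alpha,\beta})=\overline{{\rm{Ran}}}(\Lambda_{\alpha,\beta})^{\perp}=\{0\}$, for the co-eigenfunction property, and for the density of the range; but it handles the central identity $\Lambda^*_{\alpha,\beta}\Rn=\mathcal{L}_n$ by a genuinely different route. The paper treats \eqref{eq:eq_rn} as a Mellin convolution equation $\hat{f}_n \surd \lambda_{\alpha,\beta}={\rm{R}}^{(n)}\e$ in the sense of distributions, divides Mellin multipliers to reach \eqref{eq:Mellin_wn}, and invokes the uniqueness theorem of Misra--Lavoine to identify the solution with ${\rm{R}}^{(n)}\eab=\wn$; this \emph{produces} the solution without knowing it in advance, at the price of the distributional machinery. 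You instead take the candidate $\Rn$ as given, note from \eqref{eq:rep_pol} that $\Rn\in{\rm{L}}^2(\eab)$ so that $\Lambda^*_{\alpha,\beta}\Rn-\mathcal{L}_n\in\Le$, and test against the monomials $p_k$, which are total in $\Le$ by Lemma \ref{lem:dense_pol}. The adjoint relation and \eqref{eq:poly_kern} reduce everything to $\M_{\lambda_{\alpha,\beta}}(k)\M_{\wn}(k)=\spnu{\mathcal{L}_n,p_k}{\e}$, and indeed both sides equal $(k!)^2/\lbrb{n!(k-n)!}$ for $k\geq n$ and vanish for $k<n$ (the $n$-fold integration by parts in \eqref{eq:w_n} is licit: the boundary terms at the origin are ${\rm{O}}(x^{k+\beta+\frac1\alpha})$ and $\beta+\frac1\alpha\geq 1$). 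This is more elementary --- no distribution theory, no Mellin inversion --- and is a perfectly valid substitute; uniqueness then follows from injectivity exactly as in the paper. Your adjoint-side formulation $\Lambda^*_{\alpha,\beta}P^*_t=Q_t\Lambda^*_{\alpha,\beta}$ of the co-eigenfunction step is the same computation as the paper's, merely transposed.

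One point needs repair: the non-self-adjointness. You assert that a direct computation gives $\spnu{\mathcal{P}_1,\mathcal{P}_2}{\eab}\neq 0$ for all $\alpha\in(0,1)$, but this inner product reduces, with $g_k=\Gamma(\alpha k+\alpha\beta+1)$, to a positive multiple of $2g_0g_2^2-g_1^2g_2-g_0g_1g_3$, which vanishes at $\alpha=1$ and whose non-vanishing on the whole parameter range $(\alpha,\beta)\in(0,1)\times[1-\frac1\alpha,\infty)$ is a genuine Gamma-function inequality that you cannot simply assert. Either prove that inequality, or replace it by an argument already available in the paper: self-adjointness of $P_t$ would force the eigenfunctions $(\mathcal{P}_n)$, which correspond to distinct eigenvalues, to be an orthogonal sequence in ${\rm{L}}^2(\eab)$, and Lemma \ref{lem:pol} (via Chihara's theorem) rules this out for $\alpha\in(0,1)$; alternatively, the paper's own observation that $\Rn\neq c\,\mathcal{P}_n$ (the former involves the fractional powers $x^{k/\alpha}$, the latter is a polynomial) serves the same purpose. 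With that substitution the proposal is complete.
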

\begin{proof}
	Note that, for any $f,g \in \lnua$, $f,g \geq0$, we have that
	\begin{eqnarray*}
	\spnu{\Lambda_{\alpha,\beta}\:g,f}{\eab} &=& \int_0^{\infty} \int_0^{\infty}g(xy)\lambda_{\alpha,\beta}(y)f(x)\eab(x)dx\\ &=&\int_0^{\infty} \frac{g(r)}{\e(r)}\int_0^{\infty}\lambda_{\alpha,\beta}(r/x)f(x)\eab(x)\frac{dx}{x}\e(r)dr \\
	&=&  \int_0^{\infty} \frac{g(r)}{\e(r)}\int_0^{\infty}f(ry)\eab(ry)\lambda_{\alpha,\beta}(1/y)\frac{dy}{y}\e(r)dr \\
	&=&\spnu{\Lambda^*_{\alpha,\beta}\: f,g}{\e}.
	\end{eqnarray*}
Since if $f \in \lnua$, $|f| \in \lnua$, the first statement follows. Next, as, from Proposition \ref{prop:Kernel}, $\Lambda_{\alpha,\beta}\:$ has a dense range in $\lnua$, from a classical result on linear operators, ${\rm{Ker}}(\Lambda^*_{\alpha,\beta})= \{\emptyset\}$ from where we deduce that there exists  at most one solution of the equation \eqref{eq:eq_rn} in $\lnua$. Then, with the notation  ${\rm{R}}^{(n)} \e(x) = \frac{(-1)^n}{n!}(x^n \e(x))^{(n)}=\e(x) \mathcal{L}_n(x)$ and writing 	$\widehat{\Lambda}_{\alpha,\beta}f(x) = \int_0^{\infty}f(xy)\lambda_{\alpha,\beta}\:(1/y)dy/y =\int_0^{\infty}f(x/y)\lambda_{\alpha,\beta}\:(y)dy/y$, we see that if, for any $n\geq 0$, $\hat{f}_n$ is solution to the equation
\begin{equation}\label{eq:mellin_cov}
\e(x)\Lambda^*_{\alpha,\beta} f_n(x)=\widehat{\Lambda}_{\alpha,\beta}\hat{f}_n(x) = {\rm{R}}^{(n)} \e(x)
\end{equation}
and  $f_n = \frac{\hat{f}_n}{\eab} \in \lnua$ then, for $a.e.~x>0$, $f_n(x)$ is solution to \eqref{eq:eq_rn}.
Invoking  the theory of Mellin convolution in the distributional sense, as described in \cite[Chap.~11]{Misra-Lavoine}, since from the  Proposition \ref{prop:Kernel}, we have that $\lambda_{\alpha,\beta}$ defines clearly a distribution, then the equation \eqref{eq:mellin_cov} can be written, with the notation of \cite[Chap.~11.11]{Misra-Lavoine},  as
\[ \hat{f}_n \surd \lambda_{\alpha,\beta} (x) =  {\rm{R}}^{(n)} \e(x). \]
Taking the Mellin transform on both sides of this latter equation, one gets
\begin{equation*}
\M_{\hat{f}_n}(s-1)\M_{\lambda_{\alpha,\beta}}(s-1) = \frac{(-1)^n}{n!}\frac{\Gamma(s)}{\Gamma(s-n)} \Gamma(s),
\end{equation*}
where we have used for the Mellin transform of  ${\rm{R}}^{(n)} \e(x)$ the formula 11.7.7 in \cite{Misra-Lavoine}. That is, from \eqref{eq:def_Mellin_X},
\begin{equation} \label{eq:Mellin_wn}
\M_{\hat{f}_n}(s-1) = \frac{(-1)^n}{n!}\frac{\Gamma(s)}{\Gamma(s-n)} \Gamma\lb\alpha  s+\alpha \beta+1-\alpha \rb
\end{equation}
with the mapping $s\mapsto \M_{\hat{f}_n}(s-1)$  analytical in $\C_{(1-\beta-\frac{1}{\alpha}, \infty)}$. By means of \eqref{eq:est_gamma}, we have that for any $\epsilon>0$,  $|\M_{\hat{f}_n}(ib-1)|\leq  C_n e^{-(\alpha -\epsilon)\frac{\pi}{2}|b|}$ with $C_n=C(n)>0$. Thus, we deduce, from \cite[Theorem 11.10.1]{Misra-Lavoine} that the Mellin convolution \eqref{eq:mellin_cov} admits an unique solution in the sense of distribution, given, using the  formula aforementioned,  by
\[\hat{f}_n(x)=  {\rm{R}}^{(n)} \eab(x)=\frac{(-1)^n}{n!}(x^n \eab(x))^{(n)}.\]
Since the function $\eab \in\ci$, we have $\hat{f}_n \in \ci$. Moreover,
\[ f_n(x) =\frac{\hat{f}_n(x)}{\eab(x)} =\frac{(-1)^n}{n!\eab(x)}(x^n \eab(x))^{(n)}={\rm{R}}^{(n)}_{\eab} \eab(x)=\mathcal{R}_n(x),  \]
and, by Proposition \ref{prop:representation}\eqref{eq:rep_pol}, $f_n(x^\alpha)=\frac{\hat{f}_n(x^{\alpha})}{\eab(x^{\alpha})}$ is a polynomial and hence $f_n \in \lnua \cap \ci$. Thus, for all $x>0$, $\mathcal{R}_n(x)$
 is the unique solution in $\lnua$ of the equation \eqref{eq:eq_rn}, which completes the proof of the statement.
Next, we deduce from the previous identity and the fact that  $\overline{{\rm{Span}}}(\mathcal{L}_n)=\Le$, see Lemma \ref{lem:dense_pol}, that $\Lambda^*_{\alpha,\beta}\:$ has dense range in $\lnua$. Then, to prove that for each $n \in \N$, $\mathcal{R}_n$  is a co-eigenfunction, as the bounded operator $\Lambda_{\alpha,\beta}$ has a dense range in $\lnua$, it is enough to show that, for all $f \in \Le$,
\begin{eqnarray*}
 \spnu{P_t \Lambda_{\alpha,\beta}\: f,\mathcal{R}_n}{\eab}&=&e^{-nt}\spnu{\Lambda_{\alpha,\beta}\: f,\mathcal{R}_n}{\eab}.\end{eqnarray*}
Finally, by means of  the intertwining relation \eqref{eq:inter} and the fact that $Q_t$ is self-adjoint in $\lnub$ with the Laguerre polynomials $\lbrb{\mathcal{L}_n}_{n\geq 0}$ as eigenfunctions, we get that
\begin{eqnarray*}
\spnu{P_t \Lambda_{\alpha,\beta}\: f,\mathcal{R}_n}{\eab}&=& \spnu{\Lambda_{\alpha,\beta}\: Q_t  f,\mathcal{R}_n}{\eab} =\spnu{ Q_t  f,\Lambda^*_{\alpha,\beta}\:\mathcal{R}_n}{\e}= e^{-nt}\spnu{ f,\mathcal{L}_n}{\e}\\&=& e^{-nt}\spnu{ \Lambda_{\alpha,\beta}\: f,\mathcal{R}_n}{\eab},\end{eqnarray*}
which completes the proof, since for all $n\geq0$, $\mathcal{R}_n \neq \mathcal{P}_n$.
 \end{proof}
 \subsection{Proof of Proposition \ref{prop:sequence}}
The facts that $\mathcal{P}_n, \mathcal{R}_n \in \lnua$ for all $n\geq 0$ and $\overline{{\rm{Span}}}(\mathcal{P}_n)=\lnua$ follow easily from lemmas  \ref{lem:eigen}, \ref{lem:co-eigen} and \ref{lem:dense_pol}. Next, for any $f \in \lnua$, a simple change of variable yields that  
\[ ||f||^2_{\eab} = \frac{\alpha \Gamma(\alpha \beta_{\alpha}+\alpha)}{\Gamma(\alpha \beta +1)}||f \circ p_{\alpha}||^2_{\e_{ \tilde{\beta}_{\alpha}}} \]
with $\tilde{\beta}_{\alpha} = \alpha \beta_{\alpha}+\alpha-1$ and  $f\circ p_{\alpha}(x)=f(x^{\alpha}) \in L^2({\e_{ \tilde{\beta}_{\alpha}}})$. Thus, the two Hilbert spaces are isomorphic. Since the polynomials are dense in ${\rm{L}}^2({\e_{ \tilde{\beta}_{\alpha}}})$ and $\mathcal{R}_n \circ p_{\alpha}(x) =P_n^{\alpha}(x)$ is a polynomial of order $n$, we deduce from a standard result, see e.g.~\cite[Chap.~2.6]{Higgins}, that $\overline{{\rm{Span}}}(\mathcal{R}_n)=\lnua$. 
Next, using successively \eqref{eq:poly_inter} and \eqref{eq:eq_rn}, observe that for any $n,m \in \N$,
	\begin{equation*}
	\spnu{\mathcal{P}_n,\mathcal{R}_m}{\eab} =	\spnu{\Lambda\mathcal{L}_n,\mathcal{R}_m}{\eab} =	\spnu{\mathcal{L}_n,\Lambda^*\mathcal{R}_m}{\e}= \spnu{\mathcal{L}_n,\mathcal{L}_m}{\e}= \delta_{nm},
	\end{equation*}
	which shows that the sequences are both biorthogonal and  minimal. Next, by means of \eqref{eq:poly_inter} and the Parseval identity of the Laguerre polynomials, we get, for any $f \in \lnua$,
	\begin{equation*}
	\sum_{n=0}^{\infty}|\spnu{f,\mathcal{P}_n}{\eab}|^2 =\sum_{n=0}^{\infty}|\spnu{\Lambda_{\alpha,\beta}^*f,\mathcal{L}_n}{\e}|^2 = ||\Lambda_{\alpha,\beta}^*f||_\e   \leq ||f||_{\eab},
	\end{equation*}
 	which provides the Bessel property of $\Pns$. It remains to show that $\Pns$ is not a Riesz basis. By the open mapping theorem and $\Lambda_{\alpha,\beta}\mathcal{L}_n=\mathcal{P}_n$, $n\geq 0,$ it is enough to show that $\Lambda_{\alpha,\beta}$ is not bounded from below. Observing, from \eqref{eq:poly_kern}, that
 	\begin{eqnarray*} \frac{||\Lambda_{\alpha,\beta}\:p_n||_{\eab}}{||p_n||_\e}&=& \frac{\Gamma(n +1)\Gamma(\alpha \beta +1) }{\Gamma(\alpha n + \alpha \beta +1) }\frac{||p_n||_{\eab}}{||p_n||_\e}=\frac{\Gamma(n +1)\Gamma^{\frac12}(2\alpha n+\alpha \beta+1) }{\Gamma(\alpha n + \alpha \beta +1)\Gamma^{\frac12}(2n+1) }.  \end{eqnarray*}
 	As, by Stirling approximation,  $\frac{\Gamma(n +1)\Gamma^{\frac12}(2\alpha n+\alpha \beta+1) }{\Gamma(\alpha n + \alpha \beta +1)\Gamma^{\frac12}(2n+1) } \simi e^{-n(1-\alpha)\log 2}$, we get that
 	\begin{eqnarray*} \lim_{n \to \infty}\frac{||\Lambda_{\alpha,\beta}\:p_n||_{\eab}}{||p_n||_\e}&=& 0  \end{eqnarray*}
 	which completes the proof.
 	
\subsection{Proof of the Theorems \ref{thm:main1} \ref{thm:main2}}
 The proof of Theorem \ref{thm:main1} (resp.~\ref{thm:main2}) follows readily from the lemmas \ref{lem:l_pol}, \ref{lem:eigen}  and an application of the Hille-Yosida theorem, combined with the lemmas \eqref{lem:uniq} and \ref{lem:co-eigen} (resp.~Proposition \ref{prop:Kernel} and the lemmas \ref{lem:eigen} and \ref{lem:co-eigen}).

 \section{Proof of Proposition \ref{prop:crude_bound}} \label{sec:proofprop}

\subsection{The bound \eqref{eq:asympt_polyn}}
We start with the following observation.
	\begin{lemma} \label{lem:pol}
		The  sequence of polynomials $(\mathcal{P}_{n})$ are the Jensen polynomials associated to the generalized modified Bessel function    $\mathcal{I}_{\alpha,\beta}(x)=\Gamma(\alpha \beta +1)\sum_{n=0}^{\infty} \frac{1}{\Gamma(\alpha n + \alpha \beta +1) }\frac{x^n}{n!}$, i.e.~ for any $x, t \in \mathbb{R}$, we have
		\begin{equation} \label{eq: Jensen polynomials gen}
		e^{t} \mathcal{I}_{\alpha,\beta}(xt)= \sum^{\infty}_{n=0} \mathcal{P}_{n}(-x) \frac{t^{n}}{n!}.
		\end{equation}
		In particular, the sequence $\Pns$ is not orthogonal  in any weighted ${\rm{L}}^2$ space.		
	\end{lemma}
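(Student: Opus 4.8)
The plan is to prove the generating–function identity \eqref{eq: Jensen polynomials gen} by a straightforward rearrangement of series, and then to read off the non-orthogonality from the structural results already established.

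\emph{The identity.} First I would start from the closed form \eqref{eq:Pn}. Since $(-x)^k=(-1)^kx^k$, the alternating sign in \eqref{eq:Pn} cancels and $\mathcal{P}_n(-x)=\Gamma(\alpha\beta+1)\sum_{k=0}^n\binom nk\frac{x^k}{\Gamma(\alpha k+\alpha\beta+1)}$. Substituting this into $\sum_{n\ge0}\mathcal{P}_n(-x)\,\frac{t^n}{n!}$ and using $\frac1{n!}\binom nk=\frac1{k!\,(n-k)!}$, the resulting double sum is absolutely convergent for every $x,t$: the inner $k$-sum is dominated by $M(1+|x|)^n$ with $M=\sup_{s\ge\alpha\beta+1}1/\Gamma(s)<\infty$ (as $1/\Gamma$ is continuous and tends to $0$ at $+\infty$), so the whole expression is bounded by $M\,e^{|t|(1+|x|)}$, which also shows that every series in sight is entire in $(x,t)$. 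Interchanging the order of summation and evaluating $\sum_{n\ge k}\frac{t^n}{k!\,(n-k)!}=e^{t}\,\frac{t^k}{k!}$ gives $\sum_{n\ge0}\mathcal{P}_n(-x)\,\frac{t^n}{n!}=e^{t}\,\Gamma(\alpha\beta+1)\sum_{k\ge0}\frac{(xt)^k}{k!\,\Gamma(\alpha k+\alpha\beta+1)}=e^{t}\,\mathcal{I}_{\alpha,\beta}(xt)$, which is precisely \eqref{eq: Jensen polynomials gen}; equivalently, $(\mathcal{P}_n)$ is the Jensen polynomial sequence attached to $\mathcal{I}_{\alpha,\beta}$.

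\emph{Non-orthogonality.} Suppose, towards a contradiction, that $(\mathcal{P}_n)$ is orthogonal in some weighted space $L^2(\mu)$, with $\mu$ a positive Borel measure on $\mathbb{R}$ relative to which every $\mathcal{P}_n$ is square-integrable; write $m_k=\int x^k\,\mu(dx)$ for its (finite) moments. Since $\mathcal{P}_0\equiv1$, orthogonality forces $\langle\mathcal{P}_n,\mathcal{P}_0\rangle_\mu=0$ for all $n\ge1$, i.e. by \eqref{eq:Pn} that $\sum_{k=0}^n(-1)^k\binom nk a_k=0$ for all $n\ge1$, where $a_k=m_k/\Gamma(\alpha k+\alpha\beta+1)$. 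An elementary induction on $n$ (the $n$-th finite difference of $(a_k)$ at $0$ vanishes, together with $\sum_{k=0}^n(-1)^{n-k}\binom nk=0$) yields $a_k=a_0$ for all $k$, that is $m_k=a_0\,\Gamma(\alpha k+\alpha\beta+1)$ with $a_0=m_0/\Gamma(\alpha\beta+1)>0$. Since a bilinear form on polynomials depends only on the moment sequence, and the moments of $\eab$ from \eqref{eq:def_e} are themselves proportional to $\Gamma(\alpha k+\alpha\beta+1)$, it follows that $\langle P,Q\rangle_\mu$ is a fixed positive multiple of $\langle P,Q\rangle_{\eab}$ for all polynomials $P,Q$; in particular $(\mathcal{P}_n)$ would be an orthogonal (and, by normalizing, orthonormal) sequence in $\lnua$.

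\emph{Reaching the contradiction.} Here I would invoke what is already proved: $(\mathcal{P}_n)$ is complete in $\lnua$ (Proposition \ref{prop:sequence}) and $P_t\mathcal{P}_n=e^{-nt}\mathcal{P}_n$ (Lemma \ref{lem:eigen}, \eqref{eq:eig_pt}). Thus, if $(\mathcal{P}_n/\|\mathcal{P}_n\|_{\eab})$ were an orthonormal basis of $\lnua$, the bounded contraction $P_t$ would be diagonalized in an orthonormal basis with the real eigenvalues $e^{-nt}$, hence self-adjoint on $\lnua$, contradicting Theorem \ref{thm:main1} (equivalently Lemma \ref{lem:co-eigen}). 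Alternatively, an exact sequence has a unique biorthogonal sequence, so $(\mathcal{P}_n)$ orthogonal in $\lnua$ would force $\mathcal{R}_n=\|\mathcal{P}_n\|_{\eab}^{-2}\,\mathcal{P}_n$, which is impossible since by \eqref{eq:rep_pol} $\mathcal{R}_n$ contains the term $z^{n/\alpha}$ with non-zero coefficient while $\deg\mathcal{P}_n=n<n/\alpha$ for $\alpha\in(0,1)$. Overall the computation in the first part is routine, the only points needing a little care being the justification of the interchange of summations and the finite-difference argument; given the already-available structural results, the non-orthogonality is then immediate, so I do not expect a genuine obstacle.
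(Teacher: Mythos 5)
Your proof is correct, but the second half follows a genuinely different route from the paper. For the identity \eqref{eq: Jensen polynomials gen} the paper also just rearranges the double series (citing Craven--Csordas for the Jensen-polynomial formalism), so there you coincide. For the non-orthogonality, however, the paper invokes a classification theorem of Chihara \cite{Chihara-68}: the Laguerre polynomials are the \emph{only} orthogonal polynomial sequences admitting a Brenke-type generating function of the form \eqref{eq: Jensen polynomials gen}, and since for $\alpha\in(0,1)$ the $\mathcal{P}_n$ are not Laguerre polynomials, they cannot be orthogonal. You instead give a self-contained argument: the finite-difference computation forces any orthogonalizing measure to have moments proportional to $\Gamma(\alpha k+\alpha\beta+1)$, hence to induce (a multiple of) the $\lnua$ inner product on polynomials, and orthogonality in $\lnua$ is then excluded by the structural results of Section 4 (uniqueness of the biorthogonal sequence, or non-self-adjointness of $P_t$); since Proposition \ref{prop:sequence} and Lemma \ref{lem:co-eigen} are established before this lemma is used, there is no circularity. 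The trade-off is clear: the paper's proof is a one-line appeal to an external classification result, while yours is longer but elementary and stays entirely within the machinery already built, and it moreover identifies \emph{why} orthogonality fails (the would-be orthogonality measure is moment-determined to be $\eab$, where the dual family $\mathcal{R}_n$ is visibly not proportional to $\mathcal{P}_n$). One small point deserving care in your second alternative: that the coefficient of $z^{n/\alpha}$ in \eqref{eq:rep_pol} is non-zero is not immediate from the Bell-polynomial expression for $b(n)$, but it follows from the leading asymptotics $\mathcal{R}_n(x)\sim x^{n/\alpha}/(n!\,\alpha^n)$ as $x\to\infty$; your first alternative (an orthonormal eigenbasis with real eigenvalues would make $P_t$ self-adjoint, contradicting Theorem \ref{thm:main1}) sidesteps this entirely.
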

	\begin{proof}
		First, from \cite[Proposition 2.1(ii)]{Craven-Csordas-89_Jensen_Turan}, easy algebra yields the identity \eqref{eq: Jensen polynomials gen}. From an elegant result of Chihara \cite{Chihara-68}  stating that the Laguerre polynomials are the only sequence of orthogonal polynomials generating the so-called Brenke type function of the form \eqref{eq: Jensen polynomials gen}, we complete the proof.
	\end{proof}
Then, on the one hand, since, for any $p=0,\dots, n-1$, and $x \in \R$, where we modify here slightly  the notation to emphasize the dependency on the parameter $\beta$ in \eqref{eq:Pn},
		\begin{eqnarray}
		(\mathcal{P}^{\beta}_n(-x))^{(p)} &=& \sum_{k=p}^{n} \frac{\Gamma(k+1)}{\Gamma(k-p+1)}\frac{{ n \choose k}}{\Gamma(\alpha k + \alpha \beta +1)} x^{k-p} \nonumber \\&=&\frac{\Gamma(n+1)}{\Gamma(n-p+1)\Gamma(\alpha p + \alpha \beta +1)}\sum_{k=0}^{n-p} \frac{{ n-p \choose k}}{\Gamma(\alpha k   + \alpha (\beta+p) +1)} x^{k}\nonumber \\ &=& \frac{\Gamma(n+1)}{\Gamma(n-p+1)\Gamma(\alpha p + \alpha \beta +1)} \mathcal{P}^{\beta+ p}_{n-p}(-x). \label{eq:der_pol}
		\end{eqnarray}
		Next, from \eqref{eq: Jensen polynomials gen}, we get, after performing a change of variable,   that, for all $n,x> 0$,
		\begin{eqnarray} \label{eq:cont_p}
		\Pon(-x)&=&   \frac{n!}{2\pi i}x^{n}\oint_{nx} e^{z/x}\mathcal{I}_{\alpha,\beta}(z)\frac{dz}{z^{n+1}},
		\end{eqnarray}
		where the contour is a  circle centered at $0$ with radius $nx>0$. Since the series representation of $\mathcal{I}_{\alpha,\beta}$ defines an entire function, one obtains from the Stirling approximation,  that, for any $\beta\geq1- \frac1\alpha$, its order is $\limsup_{k\to \infty} \frac{k\ln k}{\ln (\Gamma(\alpha k+\gamma+1) k!)}=\frac{1}{\alpha+1}$ and its type is $\mathfrak{t}_{\alpha}=\frac{\alpha+1}{e}\limsup_{k \to \infty} k (\Gamma(\alpha k+\gamma+1) k!)^{-\frac{p}{k}}=(\alpha+1)\alpha^{-\ratio{\alpha}}$. Thus, a classical bound from its maximum modulus yields that for all $x>0$ and large $n$, $\max_{|z|=nx}|\mathcal{I}_{\alpha,\beta}(z)|\leq e^{\mathfrak{t}_{\alpha}(nx)^{\frac{1}{\alpha+1}}}$. Since plainly, for all $x>0$ and $n\in \N$, $	|\Pon(x)|\leq \Pon(-x)$, see \eqref{eq:Pn}, we deduce, from \eqref{eq:cont_p}, that
		\begin{eqnarray*} \label{eq:co_p}
		\nonumber|\Pon(x)|&\leq&  \frac{n!e^{\mathfrak{t}_{\alpha}(nx)^{\frac{1}{\alpha+1}}}}{2\pi i}x^n\oint_{nx} |e^{\frac{z}{x}}|\frac{dz}{|z|^{n+1}}
		= e^{\mathfrak{t}_{\alpha}(nx)^{\frac{1}{\alpha+1}}}\frac{n!e^{-n\ln n}}{2\pi }\int_{0}^{2\pi} e^{n\cos\theta}d\theta
		\leq C n^{\frac12} e^{\mathfrak{t}_{\alpha}(nx)^{\frac{1}{\alpha+1}}},
		\end{eqnarray*}
		where  for the last inequality we used the bound $n! \leq e^{1-n}n^{n-\frac12}$ and $\int_{0}^{2\pi} e^{n\cos\theta}d\theta<2\pi e^{n}$.
		Finally, from $\frac{\Gamma(n+1)}{\Gamma(n-p+1)}\simi n^p$, we prove \eqref{eq:asympt_polyn}, for any non-negative integer $p$, from \eqref{eq:der_pol}.
\subsection{The bound \eqref{eq:crude_bound}}
From \eqref{eq:wn_wright}, we get, by differentiating term by term,  that, for any $q \in \N$,
  \begin{eqnarray*}
  	\wn^{(q)}(x)&=&  \frac{(-1)^n}{\Gamma(\alpha \beta +1)}\sum_{k=0}^{\infty} (-1)^{k} \frac{\Gamma(k/\alpha+n_q+\beta^q_{\alpha}+1)}{\Gamma(k/\alpha+\beta^q_{\alpha}+1)} \frac{x^{\frac{k}{\alpha}+\beta^q_{\alpha}}}{n!k!}, \end{eqnarray*}
  	where we have denoted for brevity $\beta^q_\alpha=\beta_\alpha-q=\beta+\frac1\alpha-1-q$ and $n_q=n-q$.
Note that for any $m \in \N$ and  $0<r\leq m$ with $\bar{r}=[r]+1$, we have the immediate inequality
	\[ \frac{\Gamma(m+r+1)}{\Gamma(m+1)} \leq (m+\bar{r})\ldots (m+1)\leq \lbrb{m+\bar{r}}^{r+1}. \] 
	Let $n>2\max(q,-\alpha\beta^q_\alpha)=2q$
and put $K^q_{\alpha,n}=\frac{\alpha+1}{\alpha}+\frac{\labsrabs{\beta^q_{\alpha}}+1}{n}$. Then an application of the inequality above with the choice of $n$ gives
	\begin{eqnarray}
	\nonumber|\wn^{(q)}(x)| \hspace{-0.2cm} & \leq &  \frac{1}{\Gamma(\alpha \beta +1)}\lb \sum_{k=0}^{ n}  \frac{\Gamma(k/\alpha+n_q+\beta^q_{\alpha}+1)}{\Gamma(k/\alpha+\beta^q_{\alpha}+1)} \frac{x^{\frac{k}{\alpha}+\beta^q_{\alpha}}}{n!k!} +\sum_{k= n}^{\infty}  \frac{\Gamma(k/\alpha+n_q+\beta^q_{\alpha}+1)}{\Gamma(k/\alpha+\beta^q_{\alpha}+1)} \frac{x^{\frac{k}{\alpha}+\beta^q_{\alpha}}}{n!k!}\nonumber \rb \\\nonumber &\leq&  \frac{x^{\beta^q_\alpha+1}(K^q_{\alpha,n}n)^{\labs\beta^q_{\alpha}\rabs+2}}{\Gamma(\alpha \beta +1)} \sum_{k=0}^{n}  \frac{(K^q_{\alpha,n}nx)^{\frac{k}{\alpha}}}{\Gamma(k/\alpha+\beta^q_{\alpha}+1)k!} +\frac{e^{\alpha\beta_\alpha+o(1)}\lbrb{1+\gamma^q_n\alpha}^{n+1}}{\alpha^{n+1}\Gamma(\alpha \beta +1)}\sum_{k=n}^{\infty}  \frac{k^{n+1}}{n!} \frac{x^{\frac{k}{\alpha}+\beta^q_{\alpha}}}{k!} \nonumber \\
	&\leq&  \frac{x^{\beta^q_\alpha+1}(K^q_{\alpha,n}n)^{\labs\beta^q_{\alpha}\rabs+2}}{\Gamma(\alpha \beta +1)} \sum_{k=0}^{\infty}\frac{(K^q_{\alpha,n}nx)^{\frac{k}{\alpha}}}{\Gamma(k/\alpha+\beta^q_{\alpha}+1)k!}+\frac{e^{\alpha\beta^q_\alpha+o(1)}\lbrb{1+\gamma^q_n\alpha}^{n+1}}{\alpha^{n+1}\Gamma(\alpha \beta +1)}\sum_{k=n}^{\infty}  \frac{k^{n+1}}{n!} \frac{x^{\frac{k}{\alpha}+\beta^q_{\alpha}}}{k!},\label{eq:rest}
	\end{eqnarray}
	where $\gamma^q_n=\frac{1}{1+\frac{\alpha\beta^q_\alpha}{n}}=1+O\lbrb{\frac{1}{n}}$.
	Next, since the first series in the last inequality defines an entire function (at the argument $z^\alpha$), as in the proof of Lemma \ref{lem:pol}, we compute easily its order to be $\frac{\alpha}{\alpha+1}$ and its type to be $\mathfrak{t}_{\alpha}=(\alpha+1)\alpha^{-\ratio{\alpha}}$, to obtain that for large $n$,
	\begin{eqnarray*} \sum_{k=0}^{\infty}\frac{\lbrb{K^q_{\alpha,n}nx}^{\frac{k}{\alpha}}}{\Gamma(k/\alpha+\beta^q_{\alpha}+1)k!}  & =& O\left( e^{\mathfrak{t}_{\alpha}\lbrb{K^q_{\alpha,n}nx}^{\frac{1}{\alpha+1}}}\right).
	\end{eqnarray*}
	For the second term on the right-hand side of \eqref{eq:rest}, we have, from the Stirling approximation and recalling that $x\leq (\kappa n)^{\alpha}$, with  $\kappa<\ratio{\alpha}e^{-2}$, that
	\begin{eqnarray*}
		\frac{n^{-\alpha\beta^q_{\alpha}}(1+\gamma^q_n\alpha)^{n+1}}{\alpha^{n+1}}\sum_{k=n}^{\infty}  \frac{k^{n+1}}{n!} \frac{x^{\frac{k}{\alpha}+\beta^q_{\alpha}}}{k!} &\leq& \lbrb{1+\frac{\gamma^q_n}{\alpha}}^{n+1}\sum_{k=n}^{\infty}  \frac{k^{n+1} n^k}{n!} \frac{\kappa^k }{k!}\\
		&\leq & \lbrb{1+\frac{\gamma^q_n}{\alpha}}^n \sqrt{n}e^n \sum_{k=n}^{\infty} k^{\frac{3}{2}} e^{-(k-n)\ln(\frac{k}{n})}  (e\kappa)^k \\
		&\leq&
		n^{\frac{5}{2}}\lbrb{1+\frac{\gamma^q_n}{\alpha}}^{n+1} \frac{e^{2n}\kappa^n}{1-e\kappa} =\bo{1},
	\end{eqnarray*}
	where we used the bound $ n! \geq C n^{n-\frac12}e^{-n} $ and noted that $e^{(n-k)\ln(\frac{k}{n})} \leq 1,$ for $k\geq n$. This together with the fact that $\lim_{n \to \infty}K^q_{\alpha,n} = K_\alpha =\frac{1+\alpha}{\alpha}$ completes the proof.

 \section{Uniform bounds for $|\mathcal{W}_n(x)|$ via saddle points methods and proof of Proposition \ref{thm:bound}} \label{sec:bound}

 In this section we consider uniform bounds in $x$ and $n$ for $|\mathcal{R}_n(x)|$. We shall use two of its representations  as given in Proposition \ref{prop:representation} in order to obtain the best asymptotic bound for $||\mathcal{R}_n||_{\eab}$.
It will be more convenient to state our estimates in term of the function
 \[ \wn(x)= \mathcal{R}_n(x)\eab(x).\]
Note that according to our assumptions
 \[ \ra=\alpha\beta_\alpha=\alpha\beta-\alpha+1\geq 0.\]
 The next result collects all bounds we appeal to in our proofs.
 	\begin{proposition}\label{thm:Bounds}
 	We write, for any $\alpha \in (0,1)$,
 		\begin{eqnarray}\label{eq:calphaa} \label{eq:balphaa}\label{eq:Aalphaa}
 		\bca=\alpha^{\alpha}\frac{\cos^{\alpha+1}\lbrb{\frac{\pi}{2}\alpha}}{ \sin^{\alpha}\lbrb{\frac{\pi}{2}\alpha}}, \quad \bba=\alpha^{\alpha}\frac{\csc\lbrb{\frac{\pi}{2(1+\alpha)}}}{\sin^{\alpha}\lbrb{\frac{\pi\alpha}{2(1+\alpha)}}}\quad  \textrm{and} \quad
 		\baa=\lbrb{1+\alpha}^{\alpha+1},
 		\end{eqnarray}
 		with $\bca\leq\bba\leq \baa$. Then, we have the following bounds.
 		\begin{enumerate}
 			\item \label{eq:paris}For any $a>-\beta_\alpha$ and any fixed $x>0$, we have, for any $0<\epsilon<\alpha$ and $n$ large,
 			\begin{equation}\label{eq:EstimateTvNU}
 			|\wn(x)| =  {\rm{O}}\lb n^{\frac{3}{2}-a} \csc\lb( \alpha-\epsilon)\frac{\pi}{2}\rb^{n}x^{-a}\rb.
 			\end{equation}
 				\item\label{it:middleX}
 				Let  $x=\bar\kappa_\alpha(\theta_*)n^\alpha,\theta_*\in\lbrb{0,\frac{\pi}{2}}$,  where  \begin{equation}\label{eq:def_kappa}
 				\theta_* \mapsto \bar\kappa_\alpha(\theta_*)=\alpha^\alpha\lbrb{\frac{\sin\lbrb{(1+\alpha)\theta_*}}{\sin(\theta_*)}}\lbrb{\frac{\sin\lbrb{(1+\alpha)\theta_*}}{\sin\lbrb{\alpha \theta_*}}}^\alpha\end{equation}
 				is a decreasing function for each $\alpha\in\lbrbb{0,1}$ with $\bar\kappa_\alpha(0)=\baa,\bar\kappa_\alpha\lbrb{\frac{\pi}{2\lbrb{1+\alpha}}}=\bba,\bar\kappa_\alpha\lbrb{\frac{\pi}{2}}=\bca$. Then, for any $1>\bar{\alpha}>0$ and $\bar\theta\in\lbrb{0,\frac\pi2}$ uniformly on $\alpha>\bar{\alpha}$ and $0<\theta<\bar\theta$ we have  with $C\lbrb{\alpha,\theta}=C\lbrb{\bar\theta,\bar{\alpha}}\lb \cos\lbrb{\theta}\lbrb{\frac{\sin(\theta)}{\sin\lbrb{(1+\alpha)\theta}}}^{\frac{1}{\alpha}}\rb^{\ra+\frac{1}{2}},$ where $C\lbrb{\bar\theta,\bar{\alpha}}>0$, that
 				\begin{eqnarray}\label{eq:GeneralBound}
 					\quad\quad\quad\labsrabs{\wn(x)}
 					&\leq& C\lbrb{\alpha,\theta} x^{\beta_\alpha}e^{n\lbrb{-\alpha \frac{\sin\lbrb{(1+\alpha)\theta}\cos\lbrb{\theta}}{\sin\lbrb{\alpha\theta}}	+\ln\lbrb{\frac{\sin\lbrb{\theta}}{\sin\lbrb{\alpha \theta}}}}}.
 				\end{eqnarray}

 		 			\item\label{it:subOptX}
 			For any $1>\bar{\alpha}>0$ and $0<\epsilon<\bba-\bca$ there is $n_0\in\N_+$ such that for $n\geq n_0$ uniformly on $1\geq \alpha\geq \bar{\alpha}\,$ (resp. on $\alpha>0$) and $x\in\lbrb{\lbrb{\bca+\epsilon}n^\alpha,\baa n^\alpha}\,$(resp. on $x\in\lbrb{\bba n^\alpha,\baa n^\alpha}$)
 			 			\begin{eqnarray}\label{eq:middleX}
 			               \labsrabs{\wn\lbrb{x}}=\bo{x^{\beta_\alpha}e^{-\frac12 x^{\frac1\alpha}} e^{n\lbrb{-\ln\lbrb{\alpha}+\frac12\lbrb{1+\alpha}^{\frac{1+\alpha}{\alpha}}-1-\alpha}}}.
 			 			\end{eqnarray}
 			\item\label{it:LargeX} Uniformly on $ x \geq 	\baa n^\alpha$,  for any $\eta <1$,
 		\begin{eqnarray}\label{eq:MellinLargeY}
 			\labsrabs{\wn(x)}&=&\bo{\alpha^{-\frac{5}{2}}  \:x^{\beta_\alpha}e^{-\eta x^{\frac{1}{\alpha}}}e^{nH_{\alpha,\eta}}}
 		\end{eqnarray}
 		where, with $\lbbrb{\frac12,1} \subset E_{\alpha}=\lbbrb{(\alpha+1)^{-\frac{1}{\alpha}},1},\,\forall \alpha\in\lbrb{0,1}$,
 		\begin{eqnarray}\label{eq:Haeta}
 		\nonumber H_{\alpha,\eta}&=&\max\lbcurlyrbcurly{\eta\lbrb{1+\alpha}^{\frac{\alpha+1}{\alpha}}-(\alpha+1)-\ln(\alpha);\ln\lbrb{\eta^{-\alpha}-1}}\\
 		&=&\lb\eta\lbrb{1+\alpha}^{\frac{\alpha+1}{\alpha}}-(\alpha+1)-\ln(\alpha)\rb\ind{\eta  \notin E_{\alpha}}-\ln\lbrb{\eta^{-\alpha}-1}\ind{ \eta \in E_{\alpha}}
 		\end{eqnarray}
 		and $\lim_{\alpha\uparrow1}\lim_{\eta\to \frac12}H_{\alpha,\eta}=0$.

 		\end{enumerate}
 	\end{proposition}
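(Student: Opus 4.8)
The plan is to read off all four bounds from the integral representations of $\wn$ established in Proposition \ref{prop:representation}, chiefly the Mellin--Barnes formula \eqref{eq:MellinInv_r}, which, after moving the weight inside, reads
\[
\wn(x)=\frac{(-1)^n}{2\pi i\, n!}\int_{a-i\infty}^{a+i\infty}x^{-s}\,\frac{\Gamma(s)}{\Gamma(s-n)}\,\Gamma(\alpha s+\ra)\,ds,\qquad a>-\beta_\alpha,
\]
supplemented, for the fixed-$x$ regime, by the Hankel-type integral \eqref{eq:Hankel}. The common first step is to insert Stirling's formula \eqref{eqn:RefinedGamma1}, rescale $s=n\tau$ and set $\xi=xn^{-\alpha}$; using that $\Gamma(s)/\Gamma(s-n)=\prod_{j=1}^n(s-j)$ is a polynomial, the powers of $n$ produced by $x^{-s}$, by $\Gamma(\alpha s+\ra)$ and by this polynomial exactly absorb the $n^n$ in $n!$, and $\wn(x)$ is recast as a contour integral of the form $\int g_n(\tau)\,e^{n\Phi_\xi(\tau)}\,d\tau$ with $g_n$ a mild algebraic factor and
\[
\Phi_\xi(\tau)=-\tau\log\xi+(1+\alpha)\tau\log\tau-(\tau-1)\log(\tau-1)+\alpha\tau\log\alpha-\alpha\tau .
\]
The saddle equation $\Phi_\xi'(\tau)=0$ is $\alpha^\alpha\tau^{1+\alpha}=\xi(\tau-1)$, and the substitution $\tau=\zeta(\theta):=\frac{\sin((1+\alpha)\theta)}{\sin(\alpha\theta)}\,e^{i\theta}$ — for which $\zeta(\theta)-1=\frac{\sin\theta}{\sin(\alpha\theta)}\,e^{i(1+\alpha)\theta}$ by a one-line trigonometric identity — solves it precisely when $\xi=\bar\kappa_\alpha(\theta)$ of \eqref{eq:def_kappa}. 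Since $\bar\kappa_\alpha$ decreases from $\baa=\bar\kappa_\alpha(0)$ to $\bca=\bar\kappa_\alpha(\tfrac\pi2)$, this produces the bijection between the range of $x/n^\alpha$ on $(\bca,\baa)$ and the saddle parameter $\theta\in(0,\tfrac\pi2)$ that organizes the whole statement.

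The heart of the argument is part \ref{it:middleX}: for $\xi=\bar\kappa_\alpha(\theta_*)$ I would deform the line $\re s=a$ to $\re s=n\,\re\zeta(\theta_*)$ (which is positive for $\theta_*<\tfrac\pi2$, so no poles are crossed) and push it through the conjugate pair of saddles $n\zeta(\pm\theta_*)$ along the path of steepest descent; checking that along this path $\re\Phi_\xi$ attains a strict maximum at the saddles and decays on the tails gives $|\wn(x)|\le C(\alpha,\theta_*)\,x^{\beta_\alpha}\,e^{n\,\re\Phi_{\bar\kappa_\alpha(\theta_*)}(\zeta(\theta_*))}$. Using the saddle equation, $\Phi_\xi(\zeta(\theta))$ collapses to $\log(\zeta(\theta)-1)-\alpha\zeta(\theta)$, whose real part is exactly $-\alpha\frac{\sin((1+\alpha)\theta)\cos\theta}{\sin(\alpha\theta)}+\log\frac{\sin\theta}{\sin(\alpha\theta)}$, reproducing the exponent in \eqref{eq:GeneralBound}; the stated shape of the prefactor, $\propto\big(\cos\theta\,(\sin\theta/\sin((1+\alpha)\theta))^{1/\alpha}\big)^{\ra+\frac12}$, comes from collecting $g_n(\zeta(\theta))/\sqrt{n\,|\Phi_\xi''(\zeta(\theta))|}$ together with the extraction of the factor $x^{\beta_\alpha}=\bar\kappa_\alpha(\theta)^{\beta_\alpha}n^{\ra}$. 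The genuine difficulty — and the main obstacle of the whole proposition — is the \emph{uniformity}: one must keep the steepest-descent contour inside the sector $|\arg z|<\pi$ where \eqref{eq:MellinInv_r} is valid, keep $|\Phi_\xi''(\zeta(\theta))|$ bounded below, and dominate the tail of the integral, all with constants depending only on $\bar\alpha$ and $\bar\theta$; since this control degenerates as $\theta\uparrow\tfrac\pi2$ (the saddles drift towards a boundary/coalescence configuration) and as $\alpha\downarrow 0$, one is forced to the restriction $\theta<\bar\theta<\tfrac\pi2$ and to the separate suboptimal bound \ref{it:subOptX}.

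The remaining items then follow either by specializing \ref{it:middleX} or by cruder estimates of \eqref{eq:MellinInv_r}. For part \ref{it:subOptX} I would restrict \eqref{eq:GeneralBound} to $x\in((\bca+\epsilon)n^\alpha,\baa n^\alpha)$ (resp.\ to $x\in(\bba n^\alpha,\baa n^\alpha)$), where $\theta$ stays bounded away from $\tfrac\pi2$, write $\tfrac12 x^{1/\alpha}=\tfrac n2\,\bar\kappa_\alpha(\theta)^{1/\alpha}$, and reduce the claim to the elementary monotonicity fact that $\theta\mapsto\re\Phi_{\bar\kappa_\alpha(\theta)}(\zeta(\theta))+\tfrac12\bar\kappa_\alpha(\theta)^{1/\alpha}$ attains its maximum at $\theta=0$, with value $-\ln\alpha-1-\alpha+\tfrac12(1+\alpha)^{(1+\alpha)/\alpha}$, which is precisely the constant multiplying $n$ in \eqref{eq:middleX}. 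For part \ref{it:LargeX} ($x\ge\baa n^\alpha$) I would instead shift the contour in \eqref{eq:MellinInv_r} far to the right, to an abscissa $\re s\sim\eta\,x^{1/\alpha}$ with $\eta<1$ free, so that $|x^{-s}|$ yields the factor $e^{-\eta x^{1/\alpha}}$ while a Stirling estimate of the $\Gamma$-quotient on that far line contributes $e^{nH_{\alpha,\eta}}$; delimiting the admissible values of $\eta$ (the interval $E_\alpha$) and comparing the two competing exponential rates — from the saddle of the reduced integral versus from the residual pole/endpoint configuration — produces the two-branch formula \eqref{eq:Haeta}, the $\alpha^{-5/2}$ and $x^{\beta_\alpha}$ factors being the accompanying algebraic terms. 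Finally, for part \ref{eq:paris} — $x$ fixed, so $\xi\to 0$ and the saddle escapes to infinity — no deformation is needed: I would estimate \eqref{eq:MellinInv_r} directly on the fixed line $\re s=a$, bounding $\big|\prod_{j=1}^n(a-j+ib)\big|$ by first extracting $\prod_{j=1}^n|a-j|\asymp n!\,n^{-a}$ (which cancels $1/n!$ and accounts for the $n^{-a}$) and using $|\Gamma(\alpha(a+ib)+\ra)|\le C_a|b|^{\alpha a+\ra-\frac12}e^{-\frac{\pi\alpha}{2}|b|}$ from \eqref{eq:est_gamma}; integrating in $b$ against $e^{-\frac{\pi\alpha}{2}|b|}$ and absorbing the remaining algebraic slack by replacing $\alpha$ with $\alpha-\epsilon$ gives the $\csc\!\big((\alpha-\epsilon)\tfrac\pi2\big)^n$ factor and the $O(n^{3/2-a}x^{-a})$ prefactor.
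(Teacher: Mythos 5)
Your proposal is correct in substance and identifies all the right structures (the phase function, the saddle equation $\alpha^{\alpha}\tau^{1+\alpha}=\xi(\tau-1)$, the parametrization $\zeta(\theta)=\frac{\sin((1+\alpha)\theta)}{\sin(\alpha\theta)}e^{i\theta}$ with $\zeta-1=\frac{\sin\theta}{\sin(\alpha\theta)}e^{i(1+\alpha)\theta}$, and the collapse of the phase at the saddle to $\log(\zeta-1)-\alpha\zeta$, whose real part is exactly the exponent of \eqref{eq:GeneralBound}), but it implements the saddle point method differently from the paper. You deform the contour through the complex conjugate saddles along steepest-descent paths; the paper never leaves vertical lines. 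It writes $n=\varsigma a$, bounds the \emph{modulus} of the integrand on the line $\Re(s)=a$ by $e^{nH_{\kappa}(\varsigma)}e^{ag_{\varsigma}(\tau)}\overline{R}_{\varsigma}(\tau)$ (Lemma \ref{lem:bound1}), locates the maximum $\tau_*$ of $g_\varsigma$ along the line (Lemmas \ref{prop:funcG}, \ref{lem:tau}), controls the remainder integral $I(a,\varsigma)$ uniformly (Lemma \ref{lem:IntegralTerm}), and only then optimizes over the abscissa via $\frac{\partial}{\partial\varsigma}H^*_\kappa(\varsigma)=0$ (Lemma \ref{lem:H'}); the optimal point $a(1+i\tau_*)$ is your complex saddle, so the exponential rates coincide. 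What the paper's route buys is precisely the avoidance of the step you flag as "the genuine difficulty": there is no contour deformation to justify uniformly in $(\alpha,\theta)$, the two-dimensional steepest-descent geometry is replaced by a one-parameter optimization, and the degeneration as $\theta\uparrow\frac{\pi}{2}$ or $\alpha\downarrow0$ is isolated in the uniform estimates \eqref{eq:IntegralTermSmall} and \eqref{eq:Iimproved} of $I(a,\varsigma)$ — which is exactly where the restrictions $\alpha\geq\bar\alpha$ in parts \ref{it:middleX} and \ref{it:subOptX} enter. Two smaller discrepancies: for part \ref{eq:paris} the paper does not estimate the polynomial $\prod_{j=1}^{n}(s-j)$ directly but uses the reflection formula to trade $|\Gamma(a+ib-n)|^{-1}$ for $|\Gamma(n+1-a+ib)|e^{\pi|b|}$ and then invokes \cite[Lemma 2.6]{Paris01} for the resulting integral; your "extract $\prod|a-j|$ and integrate" step conceals a genuine Laplace analysis of the remaining $b$-integral (it is that analysis, not mere integrability, that produces $\csc((\alpha-\epsilon)\frac{\pi}{2})^{n}$), so it should be spelled out. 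For part \ref{it:LargeX}, your "shift far to the right" corresponds in the paper to the regime $\varsigma_*\leq\ratio{\alpha}$ where $\tau_*=0$, and the two branches of $H_{\alpha,\eta}$ come from the maximum of $H^*_{\alpha,\eta}$ on $(0,\ratio{\alpha}]$ being attained either at the interior point $\overline{\varsigma}_*^{1/\alpha}=\eta$ or at the endpoint — consistent with, though more concrete than, your "saddle versus endpoint" dichotomy.
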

 	
To prove Proposition \ref{thm:Bounds} we resort to different saddle point approximations of the Mellin-Barnes representation \eqref{eq:MellinInv_r} of $\wn(x)$, that is for any $n\in \N$, $a>-\beta_\alpha$ and $x>0$,
 \begin{equation}\label{eq:MellinInv}
 \wn(x)=\frac{(-1)^n}{2\pi i\Gamma(n+1)}\int_{a-i\infty}^{a+i\infty}x^{-s}\frac{\Gamma(s)}{\Gamma(s-n)}\Gamma\lbrb{\alpha s+\ra}ds.
 \end{equation}\\
We discuss different scenarios: when $x$ is fixed and $n\to\infty$ and when $x$ belongs to different non-overlapping regions in $\mathbb{R}_+$ which vary with $n$. The latter is required by the estimates obtained via optimal application of a somewhat generalized saddle point method.
\subsection{The bound \eqref{eq:paris}}
From \eqref{eq:MellinInv}, we get that for fixed $x>0$ and $a$ as in the statement,
\[\labs \wn(x)\rabs\leq C x^{-a}\int_{-\infty}^{\infty} \frac{\labs\Gamma(a+ib)\rabs }{\Gamma(n+1)\labs\Gamma(a+ib-n)\rabs}|\Gamma\lbrb{\alpha a+\ra+i\alpha b}|db \]
where throughout $C$ stands for a generic positive constant.
The celebrated formula $|\Gamma(a+ib-n)|\labs\Gamma(n+1-a+ib)\rabs=\frac{\pi}{\labs \sin(\pi\lb a-n-ib\rb)\rabs}$, and  the uniform bound $\labs\sin(\pi (a+ib))\rabs\leq Ce^{\pi |b|}$ yield that
 	\begin{eqnarray}\label{eq:Inv_Mel_refl}
 		|\wn(x)| &\leq& Cx^{-a} \int_{\R}  \labs\frac{ \Gamma(a+ib)\Gamma\lb n-a+ib \rb}{\Gamma(n+1)} \Gamma\lbrb{\alpha (a+ib) + \ra}\rabs e^{\pi |b|}  db.
 	\end{eqnarray}
Next using the bound \eqref{eqn:RefinedGamma1}, we get, for any $0<\epsilon<\alpha$,
\begin{eqnarray*} 
|\wn(x)|&\leq &  C x^{-a}\int_{-\infty}^{\infty}\frac{\labs\Gamma(n+1-a+ib)\rabs}{\Gamma(n+1)}e^{\lb 1-\alpha+\epsilon\rb\frac{\pi}{2}  |b|}db.
\end{eqnarray*}
Hence, using \cite[Lemma 2.6]{Paris01}, we obtain, for large $n$, that
\begin{eqnarray}
|\wn(x)|&\leq &  C x^{-a}n^{1-a} \frac{e^{n\ln n-n}}{\Gamma(n+1)}\sec\lb( 1-\alpha+\epsilon)\frac{\pi}{2}\rb^{n-\frac12}. \nonumber
\end{eqnarray}
The Stirling approximation, e.g. \eqref{eqn:RefinedGamma1}, for $\Gamma\lbrb{n+1}$ shows that \eqref{eq:EstimateTvNU} holds.
\subsection{The bounds \eqref{it:LargeX} and \eqref{it:middleX} of Proposition \ref{thm:Bounds}}

For sake of clarity, we present the proofs of our estimates by stating several intermediate results which emphasize the different key steps of the saddle point approach. We postpone the proofs of the ones requiring some technical developments to the next subsections. Throughout, we shall recall, assume and use the following relations
\[ \ra=\alpha\beta_\alpha=\alpha\beta-\alpha+1\quad \textrm{ and }     \quad  \overline{\varsigma}=1-\varsigma. \]
Note that according to our assumptions $\ra \geq 0$. We start with the following general upper bound which follows as a result of using different estimates of the gamma function.

\begin{lemma} \label{lem:bound1}
For any  $n\in \N$ and $\kappa>0$ with $n=\varsigma a$ and  $x=\lbrb{\kappa\alpha}^\alpha n^{\alpha}$, we have that on $0<\varsigma<\frac{\alpha n}{h}$, or equivalently $\alpha a>h$, for any $h>0$, that 
\begin{eqnarray}\label{eq:MellinInversion}
	\nonumber\labsrabs{\wn(x)}&\leq& C\alpha ^{\ra-\frac{1}{2}}n^{\ra}\varsigma^{-\ra-\frac{1}{2}}e^{n H_{\kappa}(\varsigma)}\IInf e^{ag_{\varsigma}(\tau)}\overline{R}_{\varsigma}(\tau)d\tau \\
	&=& C\alpha^{-\frac{1}{2}}( \kappa \varsigma)^{-\ra-\frac{1}{2}} \kappa^{\frac12} x^{\beta_{\alpha}}e^{n H_{\kappa}(\varsigma)}\IInf e^{ag_{\varsigma}(\tau)}\overline{R}_{\varsigma}(\tau)d\tau,
\end{eqnarray}	
where $C=C(h)>0$ is non-increasing in $h>0$,
\begin{equation}\label{eq:H}
H_{\kappa}(\varsigma)=-\lbrb{\frac{\alpha\ln(\kappa)}{\varsigma}+\frac{\alpha}{\varsigma}+\ln\varsigma+\frac{\alpha}{\varsigma}\ln\varsigma+\frac{\overline{\varsigma}}{\varsigma}\ln\labsrabs{\overline{\varsigma}}},
\end{equation}
 \begin{equation}\label{eq:g}
 g_{\varsigma}(\tau)=\frac{1}{2}\lbrb{(1+\alpha)\ln(1+\tau^2)-\overline{\varsigma}\ln\lbrb{1+\frac{\tau^2}{\overline{\varsigma}^2}}}-\tau\lbrb{(1+\alpha)\arctan{\tau}-\arctan\lbrb{\frac{\tau}{\overline{\varsigma}}}},
 \end{equation}
 and
  \begin{equation}\label{eq:R''}
 \overline{R}_{\varsigma}(\tau)=\lbrb{1+\tau^2}^{\frac{\ra-1}{2}}\lbrb{\overline{\varsigma}^2+\tau^2}^{\frac{1}{4}}.
  \end{equation}
\end{lemma}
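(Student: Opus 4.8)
The plan is to start from the Mellin--Barnes representation \eqref{eq:MellinInv} (legitimate since $a=n/\varsigma>0>-\beta_\alpha$, recalling $\ra\geq 0$) and parametrise the vertical contour by $s=a(1+i\tau)$, $\tau\in\R$, with $a=n/\varsigma$, so that $ds=ia\,d\tau$ and, crucially, $s-n=a(\overline{\varsigma}+i\tau)$ and $\alpha s+\ra=\alpha a(1+i\tau)+\ra$. Taking absolute values under the integral sign and using $|\Gamma(\bar z)|=|\Gamma(z)|$ together with $|x^{-a(1+i\tau)}|=x^{-a}$, the modulus of the integrand is even in $\tau$, whence
\[
\labsrabs{\wn(x)}\leq\frac{a\,x^{-a}}{\pi\,\Gamma(n+1)}\IInf\labsrabs{\frac{\Gamma(a(1+i\tau))}{\Gamma(a(\overline{\varsigma}+i\tau))}}\,\labsrabs{\Gamma(\alpha a(1+i\tau)+\ra)}\,d\tau .
\]

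Next I would feed each of the three $\Gamma$-factors, as well as $\Gamma(n+1)=\Gamma(\varsigma a+1)$ in the prefactor, into the refined Stirling estimate \eqref{eqn:RefinedGamma1}. For $\Gamma(a(1+i\tau))$ this uses $A=a$, $B=a\tau$ (so $|A+iB|=a\sqrt{1+\tau^2}$, $\arg(A+iB)=\arctan\tau$); for $\Gamma(\alpha a(1+i\tau)+\ra)$ it uses $A=\alpha a+\ra$, $B=\alpha a\tau$, or equivalently $\Gamma(w+\ra)\sim\Gamma(w)w^{\ra}$ with $w=\alpha a(1+i\tau)$, which produces the extra factor $(1+\tau^2)^{\ra/2}$; and for the denominator $\Gamma(a(\overline{\varsigma}+i\tau))$ one applies \eqref{eqn:RefinedGamma1} directly when $\overline{\varsigma}>0$ and, when $\overline{\varsigma}\leq0$, first the reflection formula $\Gamma(z)^{-1}=\pi^{-1}\Gamma(1-z)\sin(\pi z)$ --- so that $\re(1-z)=1-a\overline{\varsigma}>0$ --- together with $|\sin(\pi z)|\leq e^{\pi|\im z|}$; in either case the denominator's contribution is captured by $(\overline{\varsigma}^2+\tau^2)$ and the angle $\arg(\overline{\varsigma}+i\tau)$. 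Grouping all factors and expressing the exponents in powers of $a$, the exponent splits into a $\tau$-independent part and a $\tau$-dependent part. The $\tau$-independent part is obtained by setting $\tau=0$ throughout; substituting $n=\varsigma a$, $x=(\kappa\alpha)^{\alpha}n^{\alpha}$ and using $\overline{\varsigma}+\varsigma=1$ and $\alpha\beta_\alpha=\ra$, one checks that all the $a\ln a$ contributions cancel and precisely $a\varsigma H_{\kappa}(\varsigma)=nH_{\kappa}(\varsigma)$ survives; the $\tau$-dependent part of the exponent is exactly $ag_{\varsigma}(\tau)$, while the residual algebraic $\tau$-factors assemble into $\overline{R}_{\varsigma}(\tau)=(1+\tau^2)^{(\ra-1)/2}(\overline{\varsigma}^2+\tau^2)^{1/4}$.

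It then remains to collect the leftover powers of $a$ (hence of $n$ and $\varsigma$, via $a=n/\varsigma$) and of $\alpha,\overline{\varsigma}$, coming from the $-\tfrac12$ exponents in Stirling's formula applied to the four $\Gamma$-factors, into the prefactor $C\alpha^{\ra-\frac12}n^{\ra}\varsigma^{-\ra-\frac12}$; the second form of \eqref{eq:MellinInversion} then follows by substituting $x=(\kappa\alpha)^{\alpha}n^{\alpha}$, so that $\alpha^{\ra-\frac12}n^{\ra}\varsigma^{-\ra-\frac12}=\alpha^{-\frac12}(\kappa\varsigma)^{-\ra-\frac12}\kappa^{\frac12}x^{\beta_{\alpha}}$. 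That $C=C(h)$ can be taken non-increasing in $h$ is because the hypothesis $\alpha a>h$ is exactly what makes the $o(1)$ (and the other lower-order) remainders in Stirling's formula uniformly small, the bound improving as $h$ increases.

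The main obstacle is not conceptual but one of uniform control and bookkeeping: one must keep the Stirling error terms uniform along the entire contour --- in particular near $\tau=0$ when $\overline{\varsigma}\leq0$, where one passes through the reflection formula --- and then verify carefully that the many $\ln a$, $a\ln a$ and half-integer power contributions coming from the four $\Gamma$-factors combine \emph{exactly} into $nH_{\kappa}(\varsigma)$, $ag_{\varsigma}(\tau)$, $\overline{R}_{\varsigma}(\tau)$ and the stated prefactor; this bookkeeping is precisely what pins down the algebraic shapes of $H_{\kappa}$, $g_{\varsigma}$ and $\overline{R}_{\varsigma}$.
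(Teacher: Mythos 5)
Your proposal follows essentially the same route as the paper: it starts from the Mellin--Barnes representation \eqref{eq:MellinInv} on the line $\Re(s)=a=n/\varsigma$, removes the shift $\ra$ via the uniform ratio bound $|\Gamma(\alpha s+\ra)/\Gamma(\alpha s)|\leq C(h)|\alpha s|^{\ra}$ for $|\alpha s|\geq h$ (which is precisely where the hypothesis $\alpha a>h$ and the constant $C(h)$ enter), applies the refined Stirling estimate to $\Gamma(a(1+i\tau))$, $\Gamma(\alpha a(1+i\tau))$, $\Gamma(a(\overline{\varsigma}+i\tau))^{-1}$ and $\Gamma(n+1)$, and collects the $\tau$-independent exponent into $nH_{\kappa}(\varsigma)$, the $\tau$-dependent one into $ag_{\varsigma}(\tau)$, the algebraic factors into $\overline{R}_{\varsigma}(\tau)$ and the half-integer powers into the prefactor $C\alpha^{\ra-\frac12}n^{\ra}\varsigma^{-\ra-\frac12}$, exactly as in the paper. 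The only (harmless) deviation is your optional detour through the reflection formula for the denominator when $\overline{\varsigma}\leq 0$, where the paper simply applies the same asymptotic formula with the appropriate branch of $\arg$; the bookkeeping you outline does close as claimed.
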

 To optimize the upper bound of $|\wn(x)|$ we first investigate the function  $g_{\varsigma}(\tau)$ defined in \eqref{eq:g}. More precisely, we have that
 	\begin{equation}\label{eq:g'}
	g'_{\varsigma}(\tau)=-(1+\alpha)\arctan(\tau)+{\rm{sgn}}(\ov)\arctan\lbrb{\frac{\tau}{\labsrabs{\ov}}}+\pi\mathbb{I}_{\{\varsigma\geq 1\}},
 	\end{equation}
 	and, the following result. 
 \begin{lemma}\label{prop:funcG}
 	For all $\varsigma \geq 0$, $g_{\varsigma}(0)=0$. Moreover, the equation
 	\[ g'_{\varsigma}(\tau)=0\] has a non-zero solution $\tau_*=\tau(\varsigma)>0$ if and only if $\varsigma>\ratio{\alpha}$. Finally, for all $\varsigma \geq 0$, the mapping $\tau \mapsto g_{\varsigma}(\tau)$ attains a unique global maximum at $\tau_*=\tau(\varsigma)$, given by
 	\begin{equation}\label{eq:funcAtRoot}
 	g_{\varsigma}(\tau_*)=
 	 	\frac{1}{2}\lbrb{(1+\alpha)\ln(1+\tau_*^2)-\ov\ln\lbrb{1+\frac{\tau_*^2}{\ov^2}}}\mathbb{I}_{\{\varsigma>\frac{\alpha}{1+\alpha}\}}.
 	\end{equation}
 \end{lemma}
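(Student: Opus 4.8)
The plan is to extract everything claimed from the first two derivatives of $g_\varsigma$, keeping careful track of the sign of $\ov=1-\varsigma$. First, $g_\varsigma(0)=0$ is read off directly from \eqref{eq:g}, since there the logarithmic bracket vanishes at $\tau=0$ and the remaining term carries a factor $\tau$. To analyse $g'_\varsigma$ I would use \eqref{eq:g'} together with its endpoint values on $(0,\infty)$: one has $g'_\varsigma(0)=0$ when $\varsigma<1$, $g'_1(0)=\tfrac\pi2$ (the case $\varsigma=1$, where $\ov=0$, being handled by the common one-sided limit, which turns \eqref{eq:g'} into $g'_1(\tau)=-(1+\alpha)\arctan\tau+\tfrac\pi2$), and $g'_\varsigma(0)=\pi$ when $\varsigma>1$, while in every case $g'_\varsigma(\tau)\to-(1+\alpha)\tfrac\pi2+{\rm{sgn}}(\ov)\tfrac\pi2+\pi\ind{\varsigma\geq1}=-\alpha\tfrac\pi2<0$ as $\tau\to\infty$. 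Differentiating once more,
\[
g''_\varsigma(\tau)=-\frac{1+\alpha}{1+\tau^2}+\frac{\ov}{\ov^2+\tau^2},\qquad
\bigl\{g''_\varsigma(\tau)=0,\ \tau>0\bigr\}\ \Longleftrightarrow\ \tau^2=\frac{\ov\bigl((1+\alpha)\ov-1\bigr)}{\ov-(1+\alpha)},
\]
so the decisive structural observation is that $g''_\varsigma$ is affine in $\tau^2$, hence has \emph{at most one} zero on $(0,\infty)$, and has exactly one precisely when the displayed quotient is positive.

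I would then split into cases. If $\varsigma\geq1$ (so $\ov\leq0$), then $\ov/(\ov^2+\tau^2)\leq0$, hence $g''_\varsigma<0$ on $(0,\infty)$; since $g'_\varsigma(0)>0>g'_\varsigma(\infty)$, the function $g'_\varsigma$ is strictly decreasing with a unique zero $\tau_*>0$, which is therefore the unique global maximiser of $g_\varsigma$ (and indeed $\varsigma\geq1>\ratio\alpha$). If $0\le\varsigma<1$, then $g''_\varsigma(0)=\ov^{-1}-(1+\alpha)$, which is $\leq0$ exactly when $\ov\geq(1+\alpha)^{-1}$, i.e. $\varsigma\leq\ratio\alpha$. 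For $\varsigma\leq\ratio\alpha$ the quotient above is $\leq0$, so $g''_\varsigma$ never vanishes on $(0,\infty)$, and being $<0$ at $0$ and at $\infty$ it is $<0$ throughout; thus $g'_\varsigma$ decreases from $g'_\varsigma(0)=0$ and stays strictly negative, so \emph{no} nonzero root exists, $g_\varsigma$ is strictly decreasing on $[0,\infty)$, and its maximum is at $\tau_*=0$ with $g_\varsigma(0)=0$. For $\ratio\alpha<\varsigma<1$ the quotient is $>0$, giving a single zero $\tau_{**}>0$ of $g''_\varsigma$; since $g''_\varsigma(0)>0$ and $g''_\varsigma(\infty)<0$, $g'_\varsigma$ increases on $(0,\tau_{**})$ from $g'_\varsigma(0)=0$ and then decreases to $-\alpha\tfrac\pi2<0$, so it has a unique zero $\tau_*\in(\tau_{**},\infty)$, positive on $(0,\tau_*)$ and negative beyond; thus $\tau_*$ is again the unique global maximiser. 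This establishes both the existence dichotomy and the uniqueness assertion.

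It remains to evaluate $g_\varsigma(\tau_*)$ when $\tau_*>0$. At such a point \eqref{eq:g'} reads $(1+\alpha)\arctan\tau_*-{\rm{sgn}}(\ov)\arctan(\tau_*/|\ov|)=\pi\ind{\varsigma\geq1}$; interpreting $\arctan(\tau/\ov)$ in \eqref{eq:g} as $\arg(\ov+i\tau)={\rm{sgn}}(\ov)\arctan(\tau/|\ov|)+\pi\ind{\varsigma\geq1}$ (the two agree for $\ov>0$), this says precisely that the bracket multiplying $-\tau$ in \eqref{eq:g} vanishes at $\tau_*$. Hence $g_\varsigma(\tau_*)=\tfrac12\bigl((1+\alpha)\ln(1+\tau_*^2)-\ov\ln(1+\tau_*^2/\ov^2)\bigr)$, which, together with the value $0$ obtained above when $\tau_*=0$, is exactly \eqref{eq:funcAtRoot} (at $\varsigma=1$ the term $\ov\ln(1+\tau_*^2/\ov^2)\to0$, so no ambiguity arises). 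The only genuine obstacle is the sign/branch bookkeeping around $\varsigma=1$, where $\ov$ changes sign and \eqref{eq:g}–\eqref{eq:g'} degenerate; I would dispose of it by passing to the (agreeing) one-sided limits, after which everything reduces to the elementary monotonicity argument above once one notes that $g''_\varsigma$ is affine in $\tau^2$.
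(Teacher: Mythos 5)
Your proof is correct and follows essentially the same route as the paper's: computing $g''_{\varsigma}$, identifying $\varsigma=\ratio{\alpha}$ as the threshold at which the unique zero of $g''_{\varsigma}$ (affine in $\tau^2$) enters $(0,\infty)$, and then reading off the monotonicity of $g'_{\varsigma}$ from its endpoint values to locate the unique global maximum. If anything you are more explicit than the paper on two points it leaves implicit — the branch bookkeeping of $\arctan(\tau/\ov)$ as $\arg(\ov+i\tau)$ across $\varsigma=1$, and the substitution of the first-order condition into \eqref{eq:g} to obtain \eqref{eq:funcAtRoot} — both of which are handled correctly.
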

 Upon taking out $e^{ag_{\varsigma}(\tau_*)}$ in \eqref{eq:MellinInversion} and using that $n=a\varsigma$, we obtain
 \begin{eqnarray}\label{eq:MellinInversion1}
 \nonumber	\labsrabs{\wn(x)}& \leq& C\alpha ^{\ra-\frac{1}{2}}n^{\ra}\varsigma^{-\ra-\frac{1}{2}}e^{n\lbrb{ H_{\kappa}(\varsigma)+\frac{1}{\varsigma}g_{\varsigma}(\tau_*)}}I(a,\varsigma)\\
 	& \leq& C\alpha^{-\frac12}(\kappa \varsigma)^{-\ra-\frac{1}{2}} \kappa^{\frac12} x^{\beta_{\alpha}}e^{n\lb H_{\kappa}(\varsigma)+\frac{1}{\varsigma}g_{\varsigma}(\tau_*)\rb}I(a,\varsigma)
 \end{eqnarray}
 where the remainder integral expression is denoted by
 \begin{equation}\label{eq:I}
 I(a,\varsigma)=\IInf e^{a\lbrb{g_{\varsigma}(\tau)-g_{\varsigma}(\tau_*)}}\overline{R}_{\varsigma}(\tau)d\tau.
 \end{equation}
 We note that the saddle point method is not immediately applicable as the integrand in $I(a,\varsigma)$ depends on the parameter $\varsigma\in(0,\infty)$. In order to be able to estimate $I(a,\varsigma)$, we need to deliver  some additional  information on the mapping $\tau_*=\tau(\varsigma)$.  First we start though with a very useful lemma that will be used in the sequel. Note that 
 \begin{equation}\label{eq:rootEqn}
 g'_{\varsigma}\lbrb{\tau_*}=\left\{
 \begin{array}{ll}
 &-(1+\alpha)\arctan(\tau_*)+\arctan\lbrb{\frac{\tau_*}{1-\varsigma}}=0,\quad \varsigma<1\\
 &\pi-(1+\alpha)\arctan(\tau_*)-\arctan\lbrb{\frac{\tau_*}{\varsigma-1}}=0,\quad \varsigma\geq 1,
 \end{array}
 \right.
 \end{equation}
is simply \eqref{eq:g'} at the point of a unique global maximum $\tau_*\geq 0$. We have the following claim.
 \begin{lemma}\label{lem:wdagger} \label{lem:tau}
 	The solution of \eqref{eq:rootEqn} in terms of $\theta_*=\arctan\lbrb{\tau_*}\in\lbrb{0,\frac{\pi}{2}}$  is given by
 	\begin{equation}\label{eq:wdagger}
 	\varsigma(\theta_*)=1-\frac{\tan(\theta_*)}{\tan\lbrb{\lbrb{1+\alpha}\theta_*}}=\frac{\sin\lbrb{\alpha\theta_*}}{\sin\lbrb{(1+\alpha)\theta_*}\cos\lbrb{\theta_*}}.
 	\end{equation}
 	
 	Moreover,  $\theta_*\mapsto \varsigma(\theta_*)$ is increasing on $(0,\frac{\pi}{2})$ with range $\lbrb{\ratio{\alpha},\infty}$
 	and the following holds.
 	\begin{enumerate}
 		\item\label{it:increasing}   $\varsigma\mapsto \tau_*(\varsigma)$  is non-decreasing on $\lbrb{0,\infty}$ with $\tau_*(\varsigma)=0$ on $(0,\ratio{\alpha}]$ and $\tau_*(1)=\tan\lbrb{\frac{\pi}{2(1+\alpha)}}$.
 		\item  $\varsigma \mapsto h\lbrb{\varsigma}:=\frac{\tau_*(\varsigma)}{\labsrabs{\ov}}=\tan\lbrb{\lbrb{1+\alpha}\theta_*}$  is increasing on $\lbrb{\frac{\alpha}{1+\alpha},1}$ and  decreasing on $(1,\infty)$, with $h\lbrb{\frac{\alpha}{1+\alpha}}=0$, $\lim_{\varsigma \to 1}h(\varsigma)=\infty$ and   $\lim_{\varsigma \to \infty}h(\varsigma)=\tan\lbrb{\frac{\pi}{2}\lbrb{1-\alpha}}$.
 	\end{enumerate}	
 \end{lemma}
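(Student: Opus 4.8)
The plan is to solve the transcendental equation \eqref{eq:rootEqn} in closed form through the substitution $\theta_*=\arctan(\tau_*)\in\lbrb{0,\frac{\pi}{2}}$, i.e.~$\tau_*=\tan(\theta_*)$, and then to extract every monotonicity assertion from the resulting formula. First I would treat the case $\varsigma<1$: since $1-\varsigma>0$ one has $\arctan\lbrb{\frac{\tau_*}{1-\varsigma}}\in\lbrb{0,\frac\pi2}$, so \eqref{eq:rootEqn} forces $(1+\alpha)\theta_*\in\lbrb{0,\frac\pi2}$ and, on taking tangents, $\tan\lbrb{(1+\alpha)\theta_*}=\frac{\tan\theta_*}{1-\varsigma}$. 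In the case $\varsigma>1$ one has $\varsigma-1>0$, hence $\pi-(1+\alpha)\theta_*=\arctan\lbrb{\frac{\tau_*}{\varsigma-1}}\in\lbrb{0,\frac\pi2}$, which places $(1+\alpha)\theta_*\in\lbrb{\frac\pi2,\pi}$ and, again on taking tangents, $-\tan\lbrb{(1+\alpha)\theta_*}=\frac{\tan\theta_*}{\varsigma-1}$. In both regimes this rearranges to the single identity $\varsigma=1-\frac{\tan\theta_*}{\tan\lbrb{(1+\alpha)\theta_*}}$ (the borderline value $\varsigma=1$ being recovered continuously as $\theta_*\to\frac{\pi}{2(1+\alpha)}$), and the subtraction formula $\cos\theta\sin((1+\alpha)\theta)-\sin\theta\cos((1+\alpha)\theta)=\sin(\alpha\theta)$ converts it into \eqref{eq:wdagger}. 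The side conditions above show that as $\theta_*$ runs over $\lbrb{0,\frac\pi2}$ the parameter $\varsigma$ runs over $\lbrb{\ratio{\alpha},\infty}$; the endpoint behaviour $\varsigma(0^+)=\ratio{\alpha}$ and $\varsigma(\theta_*)\to\infty$ as $\theta_*\uparrow\frac\pi2$ comes from $\sin(\alpha\theta)\sim\alpha\theta$, $\sin((1+\alpha)\theta)\sim(1+\alpha)\theta$ near $0$ and from $\cos\theta\downarrow0$ as $\theta\uparrow\frac\pi2$.

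Next I would show that $\theta_*\mapsto\varsigma(\theta_*)$ is strictly increasing, which — the map being continuous with the above range — yields a strictly increasing inverse $\theta_*=\theta_*(\varsigma)$ on $\lbrb{\ratio{\alpha},\infty}$. Applying a product-to-sum identity to the denominator of \eqref{eq:wdagger} gives the equivalent relation $\frac{2}{\varsigma(\theta)}-1=\frac{\sin((2+\alpha)\theta)}{\sin(\alpha\theta)}=:u(\theta)$, so it is enough to prove $u$ strictly decreasing on $\lbrb{0,\frac\pi2}$. A short computation collapses the numerator of $u'(\theta)$, again by product-to-sum, to $\sin\lbrb{2(1+\alpha)\theta}-(1+\alpha)\sin(2\theta)$, so everything reduces to the elementary inequality $\sin(c\phi)<c\sin(\phi)$ for $c=1+\alpha\in(1,2)$ and $\phi=2\theta\in(0,\pi)$. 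I would prove this by splitting on $c\phi$: when $c\phi\leq\pi$ the function $F(\phi)=c\sin\phi-\sin(c\phi)$ satisfies $F(0)=0$ and $F'(\phi)=c\lbrb{\cos\phi-\cos(c\phi)}>0$ because $\cos$ is strictly decreasing on $\lbrb{0,\pi}$ and $\phi<c\phi$; when $c\phi>\pi$ (note $c\phi<2\pi$) one has $\sin(c\phi)\leq0<c\sin\phi$ outright.

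Finally I would read off the two itemized claims. On $\lbrb{\ratio{\alpha},\infty}$ the function $\tau_*(\varsigma)=\tan\lbrb{\theta_*(\varsigma)}$ is a composition of increasing maps, hence non-decreasing; on $\lbrb{0,\ratio{\alpha}}$ Lemma \ref{prop:funcG} gives $\tau_*(\varsigma)=0$, and continuity at $\ratio{\alpha}$ follows from $\theta_*(\varsigma)\to0$; imposing $\varsigma(\theta_*)=1$ in \eqref{eq:wdagger} forces $\tan((1+\alpha)\theta_*)=\infty$, i.e.~$\theta_*=\frac{\pi}{2(1+\alpha)}$, whence $\tau_*(1)=\tan\lbrb{\frac{\pi}{2(1+\alpha)}}$. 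For the last item, the tangent identities of the first paragraph give $\frac{\tau_*(\varsigma)}{\labsrabs{\ov}}=\labsrabs{\tan\lbrb{(1+\alpha)\theta_*}}$, which equals $\tan\lbrb{(1+\alpha)\theta_*}$ for $\varsigma\leq1$ (when $(1+\alpha)\theta_*\in\lbrb{0,\frac\pi2}$) and $\tan\lbrb{\pi-(1+\alpha)\theta_*}$ for $\varsigma\geq1$ (when $(1+\alpha)\theta_*\in\lbrb{\frac\pi2,\pi}$); the monotonicity on $\lbrb{\ratio{\alpha},1}$ and on $\lbrb{1,\infty}$, together with the limits $0$, $\infty$ and $\tan\lbrb{\frac\pi2(1-\alpha)}$, then follow from the monotonicity of $\theta_*(\varsigma)$ and the behaviour of $\tan$ on either side of $\frac\pi2$. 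I expect the only genuinely delicate point to be the branch bookkeeping in the first paragraph — checking that the $\varsigma<1$ and $\varsigma>1$ pieces of \eqref{eq:rootEqn} splice into one smooth formula and that $\theta_*$ really does exhaust $\lbrb{0,\frac\pi2}$ — alongside the one-line trigonometric inequality that powers the monotonicity.
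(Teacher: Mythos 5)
Your proposal is correct, and while it follows the same overall skeleton as the paper (parametrize by $\theta_*=\arctan(\tau_*)$, solve \eqref{eq:rootEqn} for $\varsigma$, then read off the monotonicity statements), the two key steps are executed by genuinely different arguments. For the monotonicity of $\theta_*\mapsto\varsigma(\theta_*)$ the paper splits $(0,\tfrac{\pi}{2})$ at $\tfrac{\pi}{2(1+\alpha)}$ and uses a different factorization on each piece: on the first it writes $1/\varsigma=\cos(\theta_*)\cdot\sin((1+\alpha)\theta_*)/\sin(\alpha\theta_*)$ and asserts (without proof) that both factors decrease, and on the second it uses the tangent form of \eqref{eq:wdagger}, noting that $\tan(\theta_*)$ and $-1/\tan((1+\alpha)\theta_*)$ are both positive and increasing there. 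Your reduction of $\tfrac{2}{\varsigma(\theta)}-1$ to the single ratio $\sin((2+\alpha)\theta)/\sin(\alpha\theta)$ handles the whole interval at once and, via the product-to-sum collapse of the numerator of the derivative, rests on the one proved inequality $\sin(c\phi)<c\sin(\phi)$ for $c\in(1,2)$, $\phi\in(0,\pi)$; this is more self-contained, and in fact the same inequality (with $c=1+2\alpha$) is what is silently needed to justify the paper's claim that $\sin((1+\alpha)\theta)/\sin(\alpha\theta)$ decreases, so your route makes explicit what the paper leaves implicit. For item (2) the paper differentiates \eqref{eq:rootEqn} in $\varsigma$ and uses $\tau_*'\geq0$ to get the sign of $h'$ on either side of $\varsigma=1$, whereas you read the monotonicity and the limits directly off the explicit formula $h=\labsrabs{\tan((1+\alpha)\theta_*)}$ together with the monotonicity of $\theta_*(\varsigma)$ and the location of $(1+\alpha)\theta_*$ relative to $\tfrac{\pi}{2}$; both are valid, and your version has the side benefit of making the branch issue in the statement ($\tan((1+\alpha)\theta_*)$ is negative for $\varsigma>1$, so the identity should be read with an absolute value or as $\tan(\pi-(1+\alpha)\theta_*)$) explicit rather than glossed over.
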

We are now ready to provide  bounds for the remainder integral $I(a,\varsigma)$ in \eqref{eq:MellinInversion1}.

 \begin{lemma}\label{lem:IntegralTerm} \label{lem:improvedIntegralTerm}
 	\begin{enumerate}
 		\item  	For any $K\geq 1$, there exists $C=C(K)>0$ such that
 		\begin{eqnarray} \label{eq:IntegralTermSmall}
 		\sup_{\varsigma\leq K}I(a,\varsigma)&\leq& \frac{C}{\alpha^{2}}.\label{eq:IntegralTermLarge}
 		\end{eqnarray}
 		
 		\item
 		Fix $\bar{\alpha}>0$. Then  $\forall\alpha>\bar{\alpha}>0$ there exist uniform constants $K_0>2, C_0>0$ such that, for $\varsigma>K_0>2$,
 		\begin{equation}\label{eq:Iimproved}
 		I\lbrb{a,\varsigma}\leq C_0 \varsigma^{\ra+\frac{1}{2}}.
 		\end{equation}
 	\end{enumerate}
 \end{lemma}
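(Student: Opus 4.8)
\emph{Plan.} I would derive both bounds by a parameter‑uniform Laplace (saddle‑point) analysis of $I(a,\varsigma)=\int_0^\infty e^{a\psi_\varsigma(\tau)}\overline{R}_{\varsigma}(\tau)\,d\tau$, abbreviating $\psi_\varsigma:=g_\varsigma-g_\varsigma(\tau_*)\le 0$; by \reflemma{prop:funcG} this has its unique maximum $0$ at $\tau_*=\tau(\varsigma)$, whose location is made explicit in \reflemma{lem:tau}. The first reduction is to dominate the amplitude \eqref{eq:R''} by a power of $1+\tau$: since $\ra=\alpha(\beta-1)+1$ stays bounded as $\alpha$ runs over $(0,1)$, one gets $\overline{R}_{\varsigma}(\tau)\le C_K(1+\tau)^{\rho}$ with $\rho=\max(\ra-\tfrac12,\tfrac12)$ on $\varsigma\le K$, and $\overline{R}_{\varsigma}(\tau)\le C\varsigma^{1/2}(1+\tau)^{\ra-1}$ for $\tau\le\varsigma$ together with $\overline{R}_{\varsigma}(\tau)\le C(1+\tau)^{\ra-1/2}$ for $\tau\ge\varsigma$ when $\varsigma$ is large. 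The second is a uniform tail estimate: rewriting \eqref{eq:g'} as $g_\varsigma'(\tau)=-\tfrac{\alpha\pi}{2}+(1+\alpha)\arctan\tfrac1\tau-\operatorname{sgn}(\ov)\arctan\tfrac{|\ov|}{\tau}$ (via $\arctan t+\arctan(1/t)=\tfrac\pi2$) shows $g_\varsigma'(\tau)\le-\tfrac{\alpha\pi}{4}$ for $\tau\ge\tau_0:=8/(\alpha\pi)$, uniformly in $\varsigma$; hence $\psi_\varsigma(\tau)\le-\tfrac{\alpha\pi}{4}(\tau-\tau_0)$ beyond $\max(\tau_*,\tau_0)$, so the tail of $I(a,\varsigma)$ is exponentially small in $a$ (indeed it carries an extra $e^{-a}$, since $\psi_\varsigma(\tau_0)\le-1$ for small $\alpha$) and is negligible for large $n$.

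\emph{Part (1).} For $\varsigma\le K$ I would split $[0,\infty)$ at $\tau_*$ and at $\max(\tau_*,\tau_0)$; the tail being harmless, everything reduces to a local estimate near $\tau_*$. When $\varsigma$ is bounded away from the threshold $\tfrac\alpha{1+\alpha}$ the maximiser is non‑degenerate, and in the extreme case $\varsigma\uparrow 1$, $\alpha\downarrow 0$, where $\tau_*$ is of order $\varsigma/\alpha$, one has $\overline{R}_{\varsigma}(\tau_*)=O(\alpha^{-\rho})$ and $|g_\varsigma''(\tau_*)|\asymp(\alpha+\varsigma)\tau_*^{-2}$, bounded below by $c_K\alpha^2$; the bulk term is therefore $O\!\big(\overline{R}_{\varsigma}(\tau_*)(a|g_\varsigma''(\tau_*)|)^{-1/2}\big)=O(\alpha^{-\rho-1}n^{-1/2})$. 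When $\varsigma$ lies within $O(\alpha)$ of the threshold one has instead $\tau_*=0$, $g_\varsigma''(0)=\tfrac1{\ov}-(1+\alpha)$ small with $g_\varsigma^{(4)}(0)<0$, so the coalescing‑saddle region contributes only $O(1)$ by the crude bound $e^{a\psi_\varsigma}\le1$ over a fixed‑length interval. Invoking $\alpha a>h$ (hence $n\gtrsim1/\alpha$, $n^{-1/2}\lesssim\alpha^{1/2}$) and noting that the resulting exponent of $\alpha$ never exceeds $2$ in the range $\alpha\downarrow0$ that controls the bound, all contributions are $O(\alpha^{-2})$ with a constant depending only on $K$ and $\beta$; uniformity in $\varsigma\le K$ is obtained by letting the cut‑offs and local widths vary continuously with $(\alpha,\varsigma)$ and treating an $\alpha$‑neighbourhood of the threshold separately.

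\emph{Part (2).} Here I would fix $\bar\alpha>0$, $\alpha>\bar\alpha$, take $K_0$ large, and perform a direct (non‑rescaled) Laplace expansion at $\tau_*$. For $\varsigma>K_0$ one has $\ov=1-\varsigma<0$, so $g_\varsigma$ is strictly concave on $[0,\infty)$; by \reflemma{lem:tau}, $\tau_*=(\varsigma-1)h(\varsigma)$ with $h(\varsigma)\downarrow\tan\!\big(\tfrac\pi2(1-\alpha)\big)$, so $\tau_*$ is comparable to $\varsigma$ with constants depending only on $\bar\alpha$ — this is precisely where $\alpha>\bar\alpha$ enters, to keep $\tan(\tfrac\pi2(1-\alpha))$ bounded. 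Then, uniformly in $\varsigma>K_0$ and $\alpha>\bar\alpha$, one checks $|g_\varsigma''(\tau_*)|\asymp\varsigma^{-1}$, $\overline{R}_{\varsigma}(\tau_*)\asymp\varsigma^{\ra-1/2}$, $|g_\varsigma^{(3)}(\tau_*)|\asymp\varsigma^{-2}$, with matching uniform bounds on $g_\varsigma''$, $g_\varsigma^{(3)}$ over $|\tau-\tau_*|\le\delta\varsigma$; the cubic remainder in the expansion is then of relative size $|g_\varsigma^{(3)}|(a|g_\varsigma''|)^{-3/2}\asymp(a\varsigma)^{-1/2}=n^{-1/2}\to0$, and $\int_{|\tau-\tau_*|>\delta\varsigma}$ is exponentially small since there $g_\varsigma'$ is bounded away from $0$ by concavity while $\overline{R}_{\varsigma}$ is only polynomial. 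This yields
\[
I(a,\varsigma)\ \le\ C\,\overline{R}_{\varsigma}(\tau_*)\big(a\,|g_\varsigma''(\tau_*)|\big)^{-1/2}\ \le\ C\,\varsigma^{\ra-1/2}\Big(\tfrac n\varsigma\cdot\tfrac1\varsigma\Big)^{-1/2}\ =\ C\,\varsigma^{\ra+1/2}n^{-1/2}\ \le\ C_0\,\varsigma^{\ra+1/2}.
\]

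\emph{Main obstacle.} The genuine work, in both parts, is the uniformity of the constants. In Part (1) this means carrying the elementary estimates continuously through the crossover $\varsigma=\tfrac\alpha{1+\alpha}$, where $\tau_*$ merges with the endpoint $0$ and the order of vanishing of $g_\varsigma$ at its maximiser jumps from $2$ to $4$, and keeping the joint $(\alpha,\varsigma)$‑dependence of $\tau_*$, $g_\varsigma''(\tau_*)$ and the cut‑off under control so that nothing beyond $\alpha^{-2}$ accumulates. In Part (2) it means showing that $K_0$ can be fixed once and for all, depending only on $\bar\alpha$, so that the asymptotic relations $|g_\varsigma''(\tau_*)|\asymp\varsigma^{-1}$ and $|g_\varsigma^{(3)}(\tau_*)|\asymp\varsigma^{-2}$ hold with uniform constants — which hinges on the monotonicity of $h$ from \reflemma{lem:tau}.
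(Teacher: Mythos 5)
Your plan and the paper's proof diverge in a substantive way. The paper never performs a local Laplace expansion at $\tau_*$: for both parts it simply uses that $\tau_*$ is the global maximiser, bounds $e^{a(g_\varsigma(\tau)-g_\varsigma(\tau_*))}\le 1$ on the whole bulk interval (of length $\asymp \tan\lbrb{\frac{\pi}{2}\tilde\alpha}\asymp \alpha^{-1}$ in part (1), and $\asymp\varsigma$ in part (2)), integrates $\overline{R}_{\varsigma}$ there directly, and uses only the uniform negativity of $g'_\varsigma$ beyond that interval to kill the tail. That crude device — which you reserve only for the coalescing-saddle window near $\varsigma=\frac{\alpha}{1+\alpha}$ — already produces $C\alpha^{-2}$ in part (1) (the $\alpha^{-2}$ coming from the length $\asymp\alpha^{-1}$ of the bulk interval and the growth of $\overline{R}_{\varsigma}$ there, not from any curvature computation) and $C_0\varsigma^{\ra+\frac12}$ in part (2) (from $\int_0^{C\varsigma}\overline{R}_{\varsigma}$). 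So the entire saddle-point machinery you invoke, and with it the uniformity of $g''_\varsigma(\tau_*)$ through the coalescence, is unnecessary for this lemma; what your local expansion buys is a sharper bound ($n^{-1/2}$ factors) that the statement does not require.

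As written, your proposal has two concrete problems. First, you explicitly defer "the genuine work" — the uniform validity of the bound $\int e^{a\psi_\varsigma}\overline{R}_\varsigma\le C\,\overline{R}_{\varsigma}(\tau_*)\lbrb{a|g''_\varsigma(\tau_*)|}^{-1/2}$ across the two-parameter family and through the degeneracy at $\varsigma=\frac{\alpha}{1+\alpha}$ — to a closing remark; since that is precisely the content of the lemma on your route, the proof is not complete. Second, the claimed $\varsigma$-uniform tail threshold is wrong: for $\varsigma>1$ one has $g'_\varsigma(\tau)=-\frac{\alpha\pi}{2}+(1+\alpha)\arctan\frac{1}{\tau}+\arctan\frac{|\ov|}{\tau}$ with the last term \emph{positive}, so $g'_\varsigma(\tau)\le-\frac{\alpha\pi}{4}$ requires $\tau\gtrsim(K+|\ov|)/\alpha$, not $\tau\ge 8/(\alpha\pi)$ independently of $\varsigma$; with $\varsigma\le K$ this is harmless (replace $\tau_0$ by $C(K)/\alpha$, which is exactly what the paper's cut at $\tan\lbrb{\frac{\pi}{2}\tilde\alpha}$ achieves for $\varsigma\le 1$ and its cut at $(\varsigma-1)\tan\lbrb{\frac{\pi}{2}H_0}$ achieves for large $\varsigma$), but the statement as you give it is false. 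If you simply promote your "crude bound over a fixed-length interval" to the whole of $[0,\tau_0(K,\alpha)]$ and keep your (corrected) tail estimate, you recover the paper's argument and both parts follow without any analysis of $g''_\varsigma$.
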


 Lemma \ref{lem:IntegralTerm} shows that for any choice of $\varsigma\leq K$ the upper bound in \eqref{eq:MellinInversion1} can be reduced to
 \begin{eqnarray}\label{eq:MellinInversion2}
 	\labsrabs{\wn(x)}&\leq
 	&C(K,\ra)\alpha^{-\frac52+\ra}n^{\ra}\varsigma^{-\ra-\frac{1}{2}}e^{nH^*_\kappa(\varsigma)}
	= C\alpha^{-\frac52}(\kappa \varsigma)^{-\ra-\frac{1}{2}}\kappa^{\frac12} x^{\beta_{\alpha}}e^{nH^*_\kappa(\varsigma)} \label{eq:boundk}
 \end{eqnarray}
 where
 \begin{eqnarray}\label{eq:H0}
 H^*_\kappa(\varsigma)&=& H_\kappa(\varsigma)+\frac{1}{\varsigma}g_{\varsigma}(\tau_*)\mathbb{I}_{\{\varsigma>\ratio{\alpha}\}}.
 \end{eqnarray}

 For given $\kappa>0,n\geq 0$ we wish to minimize $H^*_\kappa(\varsigma)$. For this purpose, putting $\tau_*=\labsrabs{\overline{\varsigma}}h$, we observe, differentiating  \eqref{eq:rootEqn} with respect to $\varsigma$, that
 \begin{equation*}
 	\left\{
 	\begin{array}{ll}
 		&(1+\alpha)\frac{\tau'_*}{1+\tau^2_*}=\frac{h'}{1+h^2},\quad \varsigma<1,\\
 		-\!\!&(1+\alpha)\frac{\tau'_*}{1+\tau^2_*}=\frac{h'}{1+h^2},\quad \varsigma\geq 1.
 	\end{array}
 	\right.
 \end{equation*}
 Thus, from \eqref{eq:funcAtRoot} we are able to get that
 \[\frac{\partial }{\partial \varsigma} g_{\varsigma}(\tau_*)=\frac{1}{2}\ln\lbrb{1+\frac{\tau^2_*}{\ov^2}},\]
 and to conclude with the help of \eqref{eq:H} that
 \begin{eqnarray}\label{eq:H'}
 \frac{\partial}{\partial\varsigma} H^*_\kappa(\varsigma)&=&\frac{\alpha\ln(\kappa \varsigma)+\ln\labsrabs{\overline{\varsigma}}}{\varsigma^2}+\frac{\mathbb{I}_{\{\varsigma>\ratio{\alpha}\}}}{2\varsigma^2}\lbrb{\ln\lbrb{1+\frac{\tau^2_*}{\ov^2}}-\lbrb{1+\alpha}\ln\lbrb{1+\tau^2_*}}. 
 \end{eqnarray}
  As a result we have the following claim.
 \begin{lemma} \label{lem:H'}
 	The equation $\frac{\partial}{\partial\varsigma} H^*_\kappa(\varsigma)=0$, which is equivalent to
 	\begin{equation}\label{eq:lny}
 	-\alpha\ln(\kappa)=\alpha\ln(\varsigma)+\ln\labsrabs{\overline{\varsigma}}+\frac{1}{2}\lbrb{\ln\lbrb{1+\frac{\tau_*^2}{\ov^2}}-\lbrb{1+\alpha}\ln\lbrb{1+\tau_*^2}}\mathbb{I}_{\{\varsigma>\ratio{\alpha}\}},
 	\end{equation}
 	has a unique solution $\varsigma_*= \varsigma(\kappa)$  for all $\alpha \kappa>\bca^{\frac1\alpha}$. We have, with  $\kappa=\dg{\kappa}=\kappa(\varsigma_*)$ and $\overline{\varsigma}_*=1-\varsigma_*$,
 	\begin{equation}\label{eq:y}
 	\dg{\kappa}=\frac{1}{\varsigma_*}\lb\labsrabs{\overline{\varsigma}_*}^{-\frac{1}{\alpha}}\ind{\varsigma_*\leq\frac{\alpha}{1+\alpha}}+\lbrb{1+\tau^2_*}^{\frac{1}{2}}\lbrb{\frac{1+\tau_*^2}{\overline{\varsigma}_*^2+\tau_*^2}}^{\frac{1}{2\alpha}}\ind{\varsigma_*>\frac{\alpha}{1+\alpha}} \rb,
 	\end{equation}
 	which when $\varsigma_*>\ratio{\alpha}$, $\bar\kappa_\alpha(\theta_*)=(\alpha\kappa_{*})^{\alpha}$ is expressed by \eqref{eq:def_kappa} in terms of $\theta_*=\arctan(\tau_*)$, and, for all $\theta_* \in (0,\frac{\pi}{2})$,
 \begin{equation} \label{eq:bthe}	-\ln\varsigma_*+\ln\lbrb{\overline{\varsigma}_*}+\frac{1}{2}\ln\lbrb{1+\frac{\tau_*^2}{\lbrb{\dg{\varsigma}-1}^2}}=
 	\ln\lbrb{\frac{\sin\lbrb{\theta_*}}{\sin\lbrb{\alpha \theta_*}}}\leq -\ln(\alpha).\end{equation} Finally, if $\alpha \kappa<\bca^{\frac1\alpha}$, we have that $\frac{\partial}{\partial\varsigma} H^*_\kappa(\varsigma)<0$, for all $\varsigma>0$.
 	\end{lemma}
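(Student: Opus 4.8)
The first step is elementary: multiplying the identity \eqref{eq:H'} by $\varsigma^{2}>0$ shows that $\frac{\partial}{\partial\varsigma}H^*_\kappa(\varsigma)=0$ is equivalent to \eqref{eq:lny}. The plan is then to ``solve \eqref{eq:lny} for $\kappa$'': dividing \eqref{eq:lny} by $-\alpha$, exponentiating, and using the root equation \eqref{eq:wdagger} in the form $1+\tau_*^2/\ov^2=\sec^{2}\lbrb{(1+\alpha)\theta_*}$ and $1+\tau_*^2=\sec^{2}\theta_*$, one checks that the right-hand side is precisely the expression $\dg\kappa=\dg\kappa(\varsigma_*)$ displayed in \eqref{eq:y}. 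Hence \eqref{eq:lny} holds (with parameter $\kappa$ and unknown $\varsigma$) exactly when $\kappa=\dg\kappa(\varsigma)$, and it suffices to prove that $\varsigma\mapsto\dg\kappa(\varsigma)$, defined by the right-hand side of \eqref{eq:y}, is a strictly decreasing homeomorphism of $(0,\infty)$ onto $\lbrb{\bca^{\frac1\alpha}/\alpha,\,\infty}$. Granting this, the solution $\varsigma_*=\varsigma(\kappa)$ of \eqref{eq:lny} exists and is unique precisely when $\kappa>\bca^{\frac1\alpha}/\alpha$, i.e.\ when $\alpha\kappa>\bca^{\frac1\alpha}$, which is the first assertion, and the formula \eqref{eq:y} for $\dg\kappa$ in terms of $\varsigma_*$ becomes a tautology.

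For the monotonicity I would split along the two branches of \eqref{eq:y} according to whether $\varsigma\leq\ratio{\alpha}$ or $\varsigma>\ratio{\alpha}$, using \reflemma{lem:wdagger}. On $\lbbrb{0,\ratio{\alpha}}$ we have $\tau_*=0$ by \reflemma{lem:wdagger}\eqref{it:increasing}, so $\dg\kappa(\varsigma)=\varsigma^{-1}(1-\varsigma)^{-\frac1\alpha}$; a one–line differentiation gives $\frac{d}{d\varsigma}\dg\kappa(\varsigma)=\alpha^{-1}\varsigma^{-2}(1-\varsigma)^{-\frac1\alpha-1}\lbrb{(1+\alpha)\varsigma-\alpha}<0$ on this interval, with $\dg\kappa(0^{+})=\infty$ and $\dg\kappa(\ratio{\alpha})=(1+\alpha)^{1+\frac1\alpha}/\alpha$, equivalently $\lbrb{\alpha\dg\kappa(\ratio{\alpha})}^{\alpha}=\baa$. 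On $\lbrb{\ratio{\alpha},\infty}$, \reflemma{lem:wdagger} lets one reparametrise by $\theta_*=\arctan(\tau_*)\in\lbrb{0,\frac\pi2}$, which is strictly increasing in $\varsigma$ (being the inverse of $\theta_*\mapsto\varsigma(\theta_*)$ in \eqref{eq:wdagger}); inserting $1+\tau_*^2=\sec^{2}\theta_*$, $\ov_*^{2}+\tau_*^{2}=\tan^{2}\theta_*/\sin^{2}\lbrb{(1+\alpha)\theta_*}$ and the value of $\varsigma_*$ from \eqref{eq:wdagger} into the second branch of \eqref{eq:y} and cancelling, one obtains exactly $\lbrb{\alpha\dg\kappa}^{\alpha}=\bar\kappa_\alpha(\theta_*)$ with $\bar\kappa_\alpha$ as in \eqref{eq:def_kappa}. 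Since by \refprop{thm:Bounds}\eqref{it:middleX} the map $\theta_*\mapsto\bar\kappa_\alpha(\theta_*)$ is strictly decreasing from $\bar\kappa_\alpha(0)=\baa$ to $\bar\kappa_\alpha\lbrb{\frac\pi2}=\bca$, the composite $\varsigma\mapsto\dg\kappa(\varsigma)$ is strictly decreasing on this branch from $\baa^{\frac1\alpha}/\alpha$ down to $\bca^{\frac1\alpha}/\alpha$ (not attained). The two branches agree at $\varsigma=\ratio{\alpha}$, so $\dg\kappa$ is the desired strictly decreasing bijection.

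It then remains to prove \eqref{eq:bthe} and the last assertion. For \eqref{eq:bthe}, note $\lbrb{\dg\varsigma-1}^{2}=\ov_*^{2}$, so the left-hand side is $-\ln\varsigma_*+\tfrac12\ln\lbrb{\ov_*^{2}+\tau_*^{2}}$ (reading $\ln\ov_*$ as $\tfrac12\ln\ov_*^{2}$); substituting \eqref{eq:wdagger} for $\varsigma_*=\varsigma(\theta_*)$ together with the trigonometric identities just used makes every factor of $\cos\theta_*$ and $\sin\lbrb{(1+\alpha)\theta_*}$ cancel, leaving $\ln\lbrb{\sin(\theta_*)/\sin(\alpha\theta_*)}$; the bound by $-\ln(\alpha)$ then follows from $\sin(\alpha\theta_*)\geq\alpha\sin(\theta_*)$, which is concavity of $\sin$ on $\lbbrbb{0,\pi}$ applied at $\alpha\theta_*=\alpha\cdot\theta_*+(1-\alpha)\cdot 0$. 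Finally, combining the equivalence of \eqref{eq:lny} with $\kappa=\dg\kappa(\varsigma)$ and \eqref{eq:H'} gives $\frac{\partial}{\partial\varsigma}H^*_\kappa(\varsigma)=\alpha\varsigma^{-2}\ln\lbrb{\kappa/\dg\kappa(\varsigma)}$; since $\dg\kappa(\varsigma)>\bca^{\frac1\alpha}/\alpha$ for every $\varsigma>0$, the hypothesis $\alpha\kappa<\bca^{\frac1\alpha}$ yields $\kappa<\dg\kappa(\varsigma)$ for all $\varsigma>0$, hence $\frac{\partial}{\partial\varsigma}H^*_\kappa(\varsigma)<0$ throughout $(0,\infty)$.

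I expect the main obstacle to be the computation in the second paragraph: the algebraic verification that the second branch of \eqref{eq:y}, after the trigonometric substitutions, collapses exactly to $\bar\kappa_\alpha(\theta_*)$ of \eqref{eq:def_kappa}, together with keeping track of the sign of $\ov_*=1-\varsigma_*$, which becomes negative once $\theta_*>\frac{\pi}{2(1+\alpha)}$ (equivalently $\varsigma_*>1$) — this is most cleanly handled by systematically replacing $\ln\ov_*$ with $\tfrac12\ln\ov_*^{2}$, or by first carrying out the computation for $\varsigma_*<1$ and checking it extends verbatim. A secondary point: should one prefer not to quote \refprop{thm:Bounds}\eqref{it:middleX} for the monotonicity of $\bar\kappa_\alpha$, one can establish it here directly by writing $\ln\bar\kappa_\alpha(\theta)=(1+\alpha)\ln(1+\alpha)-\sum_{k\geq1}f_k(\theta)$ through the product formula for $\sin$ and checking that each $f_k$ is increasing, a claim which reduces to the convexity of $v\mapsto v^{3}/(1-uv^{2})$ on $\lbrb{0,u^{-1/2}}$ (its second derivative is $2v(3+uv^{2})/(1-uv^{2})^{3}>0$) via the superadditivity of convex functions vanishing at $0$.
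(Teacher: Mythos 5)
Your proof is correct and follows the same route as the paper's own (very terse) argument: solve \eqref{eq:lny} for $\kappa$ to obtain \eqref{eq:y}, pass to the parametrization $\theta_*=\arctan(\tau_*)$ to recover \eqref{eq:def_kappa}, and observe that no solution exists when $\alpha\kappa<\bca^{\frac1\alpha}$. You in fact supply two details the paper's three-sentence proof leaves implicit — the strict monotonicity of $\varsigma\mapsto\dg{\kappa}(\varsigma)$ onto $\lbrb{\bca^{\frac1\alpha}/\alpha,\infty}$, which is what actually delivers existence and uniqueness, and the identity $\frac{\partial}{\partial\varsigma}H^*_\kappa(\varsigma)=\alpha\varsigma^{-2}\ln\lbrb{\kappa/\dg{\kappa}(\varsigma)}$, which pins down the sign in the final assertion rather than merely the non-vanishing of the derivative.
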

 	 \begin{proof}
 	 	The representation \eqref{eq:y}  of $\dg{\kappa}$ is immediate from the equation \eqref{eq:lny}, which in turn is the solution to $\frac{\partial}{\partial\varsigma} H^*_\kappa(\varsigma)=0$, see \eqref{eq:H'}. If $\varsigma> \ratio{\alpha}$, then \eqref{eq:def_kappa}  follows as a result of  the parametrization  $\theta_*=\arctan(\tau_*)$ in \eqref{eq:y}. Next from the fact that $\lim_{\kappa \to 0}\ln \kappa= -\infty$ we conclude that  $\frac{\partial}{\partial\varsigma} H^*_\kappa(\varsigma)<0$, for all $\varsigma>0$ and $\alpha\kappa<\bca$ since \eqref{eq:lny} has no solution. 
 	 \end{proof}
\subsubsection{Proof of \eqref{eq:GeneralBound} and \eqref{eq:middleX}}
 When $\alpha\kappa>\bca^{\frac1\alpha}$ thanks to \eqref{eq:lny} of Lemma \ref{lem:H'} and upon substitution in \eqref{eq:H0} we have that
 \begin{equation}\label{eq:Hydagger}
 H^*_{\kappa}(\varsigma_*)=-\frac{\alpha}{\varsigma_*}-\ln\frac{\varsigma_*}{|\overline{\varsigma}_*|}+\frac{1}{2}\ln\lbrb{1+\frac{\tau^2_*}{\lbrb{\varsigma_*-1}^2}}\ind{\varsigma_*>\ratio{\alpha}}.
 \end{equation}

 Using \eqref{eq:Hydagger} with the parametrization $\dg\theta=\arctan\lbrb{\dg{\tau}}$, from \eqref{eq:MellinInversion2} and \eqref{eq:bthe},  we get \eqref{eq:GeneralBound}, since, in this setting, $\lbrb{\alpha \kappa_*}^\alpha=\bar{\kappa}_\alpha\lbrb{\theta_*}\in\lbrb{\bca,\baa}$ is represented by \eqref{eq:def_kappa}  and from Lemma \ref{lem:H'} $\varsigma_*>\ratio{\alpha}$. Next from \eqref{eq:MellinInversion2} taking out the term $-\frac{\alpha}{\varsigma_*}$ in \eqref{eq:Hydagger}, and using  $x=\bar\kappa_\alpha(\theta_*) n^\alpha$, we get
 	\begin{equation*}
 	|\wn(x)|\leq C \alpha^{\ra-\frac52}  \lb\bar{\kappa}_{\alpha}^{\frac1\alpha}(\theta_*) \varsigma_*\rb^{-\ra-\frac{1}{2}} \bar{\kappa}_{\alpha}^{\frac{1}{2\alpha}}(\theta_*) x^{\beta_{\alpha}}e^{-\frac12x^{\frac{1}{\alpha}}}e^{-n\ln(\alpha)} e^{ n\lb \frac12-\frac{\alpha \bar{\kappa}_{\alpha}^{-\frac1\alpha}(\theta_*)}{ \varsigma_*}\rb\bar{\kappa}_{\alpha}^{\frac1\alpha}(\theta_*)}.
 	\end{equation*}
 The estimate \eqref{eq:middleX} is obtained with the worst possible choices, that is,  $\bar{\kappa}_{\alpha}^{\frac1\alpha}(0)=\lbrb{1+\alpha}^{\frac{1+\alpha}{\alpha}}$, $\varsigma_*(0) = \ratio{\alpha}$ in the last exponent, $\bar{\kappa}_\alpha\lbrb{\theta_*}\in\lbrb{\bca,\baa}$ and  $\varsigma_*\geq \frac{\alpha}{1+\alpha}$.
  \subsubsection{Proof of \eqref{eq:MellinLargeY}}
  	When $\varsigma\leq \ratio{\alpha}$, from \eqref{eq:y}, we have $\kappa_* \varsigma_* = \ov_*^{-\frac{1}{\alpha}}$ and thus 
  	   \eqref{eq:MellinInversion2} with $K=1$  yields the following inequality, recalling that  $x^{\frac1\alpha}=\alpha\kappa_*n$,
  	\begin{eqnarray*}
  		\labsrabs{\wn(x)}&\leq& C\alpha^{-3}\ov_{*}^{\frac1\alpha\left(\ra+\frac{1}{2}\right)} e^{n	H^*_{\alpha,\eta}\lbrb{\dg{\varsigma}}}x^{\beta_{\alpha}+\frac{1}{2\alpha}} e^{-\eta x^{\frac{1}{\alpha}}},
  	\end{eqnarray*}
  	where we have set $H^*_{\alpha,\eta}(\dg{\varsigma})=-\frac{\alpha}{\varsigma_*}-\ln\lb \frac{\varsigma_*}{\overline{\varsigma}_*}\rb+\frac{\eta\alpha}{\varsigma_*\lbrb{\overline{\varsigma}_*}^{\frac{1}{\alpha}}}$ and  note that $\ov_{*}^{\frac1\alpha\left(\ra+\frac{1}{2}\right)}\leq 1$, for $0\leq \varsigma\leq \ratio{\alpha}$. Easy algebra gives then that
  	\[\frac{\partial }{\partial \dg{\varsigma}}H^*_{\alpha,\eta}(\dg{\varsigma})=\frac{\alpha-(1+\alpha)\dg{\varsigma}}{\dg{\varsigma}^2\dg{\overline{\varsigma}}}\lbrb{1-\frac{\eta}{\dg{\overline{\varsigma}}^{\frac{1}{\alpha}}}}.\]
  	Thus,  $H^*_{\alpha,\eta}(\dg{\varsigma})$ has at most one local maximum on $\lbrb{0,\ratio{\alpha}}$ either at  $\ratio{\alpha}$ or at $\dg{\overline{\varsigma}}^{\frac{1}{\alpha}}=\eta$, with $H^*_{\alpha,\eta}\lbrb{\frac{\alpha}{1+\alpha}}=\eta\lbrb{1+\alpha}^{\frac{1}{\alpha}+1}-(\alpha+1)-\ln(\alpha)$  and  $H^*_{\alpha,\eta}\lbrb{1-\eta^{\alpha}}=\ln\lbrb{\frac{\eta^{\alpha}}{1-\eta^{\alpha}}}$, which completes the proof of \eqref{eq:MellinLargeY}.

 \subsection{Proofs of the lemmas}
 \subsubsection{Proof of Lemma \ref{lem:bound1}}
Recall that $\ra=\alpha\beta_\alpha=1+\alpha\lbrb{\beta-1}$. First, from \cite[(2.2.30),\,Ch.2, p.50]{Paris01}
 since $\Re(s)>0$ we have, when $|\alpha s|\geq h$, that
 \[\labsrabs{\frac{\Gamma(\alpha s+\ra)}{\Gamma\lbrb{\alpha s}}}\leq C\labsrabs{\alpha s}^{\ra}\]
 for some constant $C=C(h)>0$.
 Hence, using this in \eqref{eq:MellinInv} with $\alpha a>h$, we get with some absolute constant $C>0$  that
 \begin{eqnarray}
 |\wn(x)|&\leq&\frac{C\alpha^{\ra}x^{-a}}{\Gamma(n+1)}\int_{-\infty}^{\infty}\labsrabs{a+ib}^{\bar{\beta}_\alpha}\labsrabs{\frac{\Gamma(a+ib)}{\Gamma(a-n+ib)}\Gamma\lbrb{a\alpha+i\alpha b}}db \nonumber \\
 &=&\frac{2C \alpha^{\ra}x^{-a}a^{\ra+1}}{\Gamma(n+1)}\IInf \labsrabs{1+i\tau}^{\bar{\beta}_\alpha}\labsrabs{\frac{\Gamma\lbrb{a\lbrb{1+i\tau}}\Gamma\lbrb{a\alpha\lbrb{1+i\tau}}}{\Gamma\lbrb{a\lbrb{\overline{\varsigma}+i\tau}}}}d\tau \label{eq:bound_1},
 \end{eqnarray}
 where we have performed the change of variables $\tau =\frac{b}{a}$ and $n=a\varsigma$.  We proceed with some estimates of the gamma functions. First,  from \cite[(2.1.8), Ch.2, p.70]{Paris01}, we get that for $\alpha a>h$
 \begin{eqnarray*}
 	\labsrabs{\Gamma\lbrb{a\lbrb{1+i\tau}}\Gamma\lbrb{\alpha a\lbrb{1+i\tau}}}
 	&\leq&C(h) e^{-a(1+\alpha)+a\ln(a)+a\alpha\ln(a\alpha)}\frac{e^{a\lbrb{\frac{(1+\alpha)}{2}\ln\lbrb{1+\tau^2}-(1+\alpha)\tau\arctan(\tau)}}}{a\alpha^{\frac{1}{2}}\lbrb{1+\tau^2}^{\frac{1}{2}}}.
 \end{eqnarray*}

 Next,  from  \cite[(2.1.8), Ch.2, p.70]{Paris01} using that $a\varsigma\ln\lbrb{a}=\ln\lbrb{\frac n\varsigma}^n$ with some absolute constant $C>0$, we have that
 \begin{eqnarray*}
 	\labsrabs{\Gamma\lbrb{a\lbrb{\overline{\varsigma}+i\tau}}}^{-1}&\leq& Ce^{-n+a}e^{-\lbrb{a-a\varsigma-\frac{1}{2}}\ln(a)}e^{-\lbrb{a-a\varsigma-\frac{1}{2}}\ln\labsrabs{\overline{\varsigma}+i\tau}+a\tau\arctan\frac{\tau}{\overline{\varsigma}}} \nonumber \\
 	&=&
 	Ce^{-n}n^{n}\varsigma^{-n}a^{\frac{1}{2}}e^{a-a\ln(a)}e^{a\lbrb{-\frac{\overline{\varsigma}}{2}\ln\lbrb{\overline{\varsigma}^2+\tau^2}+\tau\arctan\frac{\tau}{\overline{\varsigma}}}}\lbrb{\overline{\varsigma}^2+\tau^2}^{\frac{1}{4}}.
 \end{eqnarray*}
 Putting pieces together and using the Stirling formula for $\Gamma(n+1)$ yield with some $C=C(h)>0$, the inequality
 \begin{eqnarray} \label{eq:gn_est}
\labsrabs{1+i\tau}^{\bar{\beta}_\alpha}\frac{\labsrabs{\Gamma\lbrb{a\lbrb{1+i\tau}}\Gamma\lbrb{\alpha a\lbrb{1+i\tau}}}}{\Gamma(n+1)\labsrabs{\Gamma\lbrb{a\lbrb{\overline{\varsigma}+i\tau}}}}&\leq&
 C (\alpha a)^{-\frac{1}{2}}n^{-\frac{1}{2}} \varsigma^{-n} e^{-a\alpha+a\alpha\ln(a\alpha)} R_{a,\varsigma}(\tau)
 \end{eqnarray}
 where with $\overline{R}_{\varsigma}(\tau)=\lbrb{1+\tau^2}^{\frac{\ra-1}{2}}\lbrb{\overline{\varsigma}^2+\tau^2}^{\frac{1}{4}}$, see \eqref{eq:R''},
 \begin{eqnarray}\label{eq:R'}
 \nonumber R_{a,\varsigma}(\tau)&=&e^{\frac{a}{2}\lbrb{(1+\alpha)\ln\lbrb{1+\tau^2}-\overline{\varsigma}\ln\lbrb{\overline{\varsigma}^2+\tau^2}}}e^{-a\tau\lbrb{(1+\alpha)\arctan(\tau)-\arctan\frac{\tau}{\overline{\varsigma}}}}{\overline{R}}_{\varsigma}(\tau)\\& =& e^{a g_{\varsigma}(\tau)}e^{-a\overline{\varsigma}\ln\labsrabs{\overline{\varsigma}}}\overline{R}_{\varsigma}(\tau)=e^{ag_{\varsigma}(\tau)} e^{-n \frac{\overline{\varsigma}}{\varsigma}\ln\labsrabs{\overline{\varsigma}}}\overline{{R}}_{\varsigma}(\tau).
 \end{eqnarray}
 Plugging the upper bound \eqref{eq:gn_est} with the expression in \eqref{eq:R'} in \eqref{eq:bound_1}, we get that
 \begin{eqnarray*}
 	\labsrabs{\wn(x)}&\leq& C\alpha^{\ra-\frac12} n^{-\frac{1}{2}}a^{\ra+\frac{1}{2}}e^{-a \ln(x)-n\ln\varsigma-a\alpha+a\alpha\ln(a\alpha)-a\overline{\varsigma}\ln\labsrabs{\overline{\varsigma}}}\IInf e^{ag_{\varsigma}(\tau)}\overline{R}_{\varsigma}(\tau)d\tau,
 \end{eqnarray*}
which, after rearranging the terms by using the relations $x=(\alpha \kappa n)^{\alpha}$ and $n=\varsigma a$, completes the proof of the form \eqref{eq:MellinInversion}. The fact that $\varsigma\in(0,\frac{\alpha n}{h})$ follows from the fact that $a\alpha=\frac{n}{\varsigma}\alpha>h$.

\subsubsection{Proof of lemmas \ref{prop:funcG} and \ref{lem:wdagger}}
 First, for $\varsigma<1$, we note from \eqref{eq:g'} that
 $g''_{\varsigma}(\tau)=-\frac{1+\alpha}{1+\tau^2}+\frac{\ov}{\ov^2+\tau^2}<0,\,\forall\tau>0$
  	if and only if  $\tau^2>\frac{\ov}{\alpha+\varsigma}\lbrb{1- \ov(1+\alpha)},\,\forall\tau>0,$  which is equivalent to $\varsigma\leq \ratio{\alpha}$. Therefore, the mapping $\tau \mapsto g'_{\varsigma}(\tau)$ is decreasing  for $\varsigma\leq \ratio{\alpha}$ and otherwise increasing on a finite interval of the type $(0,b)$ and then decreasing to $\lim_{\tau \to \infty}g'_{\varsigma}(\tau)=-\frac{\pi}{2}\alpha<0$. Since  $g_{\varsigma}(0)=0$ we conclude the claim in this case.
  	For $\varsigma=1$ the claim is immediate, thus we assume in the sequel that $\varsigma>1$. It is clear that $\tau \mapsto g'_{\varsigma}(\tau)$ is decreasing, $g'_{\varsigma}(0)=\pi$  and $\lim_{\tau \to \infty}g'_{\varsigma}(\tau)=-\frac{\pi}{2}\alpha<0$, which completes the proof of the Lemma \ref{prop:funcG}.
  	Next, the proof of \eqref{eq:wdagger} is immediate from \eqref{eq:rootEqn}. The fact that $\varsigma=\varsigma(\theta_*)$ is increasing on $\lbrb{0,\frac{\pi}{2(1+\alpha)}}$ follows from the fact that both $\cos\lbrb{\dg{\theta}}$ and $\sin((1+\alpha)\theta_*)/\sin(\alpha \theta_*)$ are decreasing on this interval. The remaining portion between $\lbrb{\frac{\pi}{2(1+\alpha)},\frac{\pi}{2}}$ is dealt with \eqref{eq:wdagger} since $\tan(\theta_*)$ and $-\frac{1}{\tan\lbrb{\lbrb{1+\alpha}\theta_*}}$ are increasing on the interval. This completes the proof of the first part of Lemma \ref{lem:wdagger}.
  		Let $\varsigma<1$. Then we know from Lemma \ref{prop:funcG} that $\tau_*=0$, for $\varsigma\leq \ratio{\alpha}$. The fact that $\tau_*$ is increasing for $\varsigma\in\lbrb{\ratio{\alpha},\infty}$ follows from the properties reflected in \eqref{eq:wdagger} of Lemma \ref{lem:wdagger} which imply that $\theta_*=\arctan(\tau_*)$ and hence $\tau_*$ is increasing with $\varsigma$. The fact that $h(\varsigma)$ is increasing on $(0,1)$ and decreasing on $(1,\infty)$ follows from \eqref{eq:rootEqn} upon differentiation and using the fact that $\tau'_*\geq 0$. The values of   $\tau_*(1), \lim_{\varsigma \downarrow 1}h(\varsigma),\lim_{\varsigma \uparrow 1}h(\varsigma), \lim_{\varsigma \to \infty}h(\varsigma)$ follow from substitutions and manipulations of \eqref{eq:rootEqn}.

  \subsubsection{Proof of Lemma \ref{lem:IntegralTerm}}

  		For $\varsigma\leq 1$ we have from \eqref{eq:R''} with $\overline{\rho}_{\alpha}=\frac{\ra}{2}-\frac{1}{4}$  that
  		\[\overline{R}_{\varsigma}(\tau)\leq \lbrb{1+\tau^2}^{\overline{\rho}_{\alpha}}.\]
  	Next according to Proposition \ref{prop:funcG} for each $\varsigma$, $g_{\varsigma}(\tau)$ attains a unique global maximum at $\tau_*$ and from Lemma \ref{lem:tau}\eqref{it:increasing} we have that $\sup_{\varsigma\leq 1}\tau_*(\varsigma)=\tau_*(1)=\tan\lbrb{\frac{\pi}{2(1+\alpha)}}$. Therefore, with $\frac{1}{1+\alpha}<\tilde\alpha=\frac{1+\frac{\alpha}{2}}{1+\alpha}<1$, we have  $\tan\lbrb{\frac{\pi}{2(1+\alpha)}}<\tan\lbrb{\frac{\pi}{2}\tilde{\alpha}}$ and we get that
  		\begin{eqnarray*}
  			I_1\lbrb{a,\varsigma}&=&\int_{0}^{\tan\lbrb{\frac{\pi}{2}\tilde{\alpha}}}e^{a\int_{\tau_*}^{\tau}g'_\varsigma(r)dr}
  			\overline{R}_{\varsigma}(\tau)d\tau\leq\int_{0}^{\tan\lbrb{\frac{\pi}{2}\tilde{\alpha}}}\lbrb{1+\tau^2}^{\overline{\rho}_{\alpha}}d\tau \leq K_1(\alpha),
  		\end{eqnarray*}
  where we set $K_1(\alpha) = \tan\lbrb{\frac{\pi}{2}\tilde{\alpha}}\lbrb{\frac{\mathbb{I}_{\lbcurlyrbcurly{\rra>0}}}{\cos^{2\overline{\rho}_{\alpha}}\lbrb{\frac{\pi}{2}\tilde{\alpha}}}+\mathbb{I}_{\lbcurlyrbcurly{\rra\leq 0}}}$.
  		However, we check that from \eqref{eq:g'} with $\varsigma\leq1$ that for $\tau>\tan\lbrb{\frac{\pi}{2}\tilde\alpha}$ we have that
  		\[g'_{\varsigma}(\tau)\leq \frac{\pi}{2}-(1+\alpha)\arctan\lbrb{\tan\lbrb{\frac{\pi}{2}\tilde\alpha}}\leq -\frac{\pi\alpha}{4}.\]
  		Thus,  for any $0<\alpha\leq 1$ and $\varsigma<1$ using $a=\frac{n}{\varsigma}>n$ and $\tau-\tau_*(\varsigma)\geq \tau-\tau_*(1)\geq \tau-\tan\lbrb{\frac{\pi}{2}\tilde\alpha}$ we have that
  		\begin{eqnarray*}
  			I_2\lbrb{a,\varsigma}&=&\int_{\tan\lbrb{\frac{\pi}{2}\tilde\alpha}}^\infty e^{a\int_{\tau_*}^{\tau}g'_\varsigma(r)dr}\overline{R}_{\varsigma}(\tau)d\tau\leq \int_{\tan\lbrb{\frac{\pi}{2}\tilde\alpha}}^{\infty}e^{-n\frac{\pi\alpha}{4}(\tau-\tan\lbrb{\frac{\pi}{2}\tilde\alpha})}\lbrb{1+\tau^2}^{\overline{\rho}_{\alpha}}d\tau \leq
  		K_2\lbrb{\alpha},
  		\end{eqnarray*}
  with $K_2\lbrb{\alpha}=\int_{0}^{\infty}e^{-\frac{\pi\alpha}{4}\tau}\lbrb{1+\lbrb{\tau+\tan\lbrb{\frac{\pi}{2}\tilde\alpha}}^2\mathbb{I}_{\lbcurlyrbcurly{\rra>0}}}^{\overline{\rho}_{\alpha}}d\tau$.
  		Thus, we have that
  		\[I(a,\varsigma)=I_1\lbrb{a,\varsigma}+I_2\lbrb{a,\varsigma}\leq K_1(\alpha)+K_2\lbrb{\alpha}.\]
  		However, as  $\lim_{\alpha \downarrow 0}\overline{\rho}_{\alpha}=-\frac{1}{4}+\lim_{\alpha \downarrow 0}\frac{\alpha\beta_\alpha}{2}=\frac14$ then clearly $\alpha^2K_1(\alpha)=o(1)$. Also immediately $\alpha^2 K_2(\alpha)=o(1)$. Thus \eqref{eq:IntegralTermSmall} follows. The proof of \eqref{eq:IntegralTermSmall} follows a similar pattern for $K\geq \varsigma>1$ with some $K>1$. To prove \eqref{eq:Iimproved}, choose, for any $\tilde{\varsigma}=\varsigma-1>1$, $\tau>\tilde{\varsigma}\tan\lbrb{\frac{\pi}{2}H}$, for some $0<H<1$. Then from \eqref{eq:g'} we have that
 	\[g'_{\varsigma}(\tau)\leq \pi-\frac{\pi}{2}H-(1+\bar\alpha)\arctan\lbrb{\tilde{\varsigma}\tan\lbrb{\frac{\pi}{2}H}}.\]
 	Therefore, for any $\frac{\bar{\alpha}}2>\epsilon>0$ small enough and $1>H_0>1-\frac{\bar{\alpha}}{2}$ there exists, $K_0=K_0(\epsilon)>2$ such that $\forall\alpha>\bar\alpha$ and $\varsigma>K_0$ we have that
 	$g'_{\varsigma}(\tau)\leq -\epsilon, \forall\tau>\tilde{\varsigma}T_0,$ with $T_0= \tan\lbrb{\frac{\pi}{2}H_0}$, and, thus we conclude that $\dg{\tau}<\tilde{\varsigma}T_0$. Using again that at $\dg{\tau}$ the function $g_{\varsigma}(\tau)$ attains a unique global maximum, we get using the expression for $\overline{R}_{\varsigma}(\tau)$ in \eqref{eq:R''} that
 	\begin{eqnarray*}
 		I_1(a,\varsigma)&=&\int_{0}^{\tilde{\varsigma}T_0}e^{a\int_{\tau_*}^{\tau}g'_{\varsigma}(r)dr}\:\overline{R}_{\varsigma}(\tau)d\tau\\ &\leq
 		&\int_{0}^{\tilde{\varsigma}T_0}\lbrb{1+\tau^2}^{\frac{\ra-1}{2}}\lbrb{\overline{\varsigma}^2+\tau^2}^{\frac{1}{4}}d\tau\\ &\leq
 		&\tilde{\varsigma}^{\frac{3}{2}}\int_{0}^{T_0}\lbrb{1+\tilde{\varsigma}^2\tau^2}^{\frac{\ra-1}{2}}\lbrb{1+\tau^2}^{\frac{1}{4}}d\tau.
 	\end{eqnarray*}
 	When $\ra\geq 1$ we then get by estimating at $\tau=T_0$ that $I_1(a,\varsigma)\leq C\varsigma^{\ra+\frac{1}{2}}$. Otherwise, if $0\leq \ra\leq 1$, we get by estimating only $\lbrb{1+\tau^2}^{\frac{1}{4}}\leq \lbrb{1+T_0^2}^{\frac{1}{4}}$ in the last integral and changing back variables that
 	\begin{align*}
 		&I_1(a,\varsigma)\leq  \tilde{\varsigma}^{\frac{1}{2}}\int_{0}^{\tilde{\varsigma}T_0}\lbrb{1+\tau^2}^{\frac{\ra-1}{2}}d\tau\leq C\tilde{\varsigma}^{\ra+\frac{1}{2}}\leq C\varsigma^{\ra+\frac{1}{2}}.
 	\end{align*}
 	Next, since $g'_{\varsigma}(\tau)\leq -\epsilon, \forall \tau>\tilde{\varsigma}T_0,$ and $\dg{\tau}<\tilde{\varsigma}T_0$, we have, recalling that $n=a\varsigma$,
 	\begin{eqnarray*}
 		I_2\lbrb{a,\varsigma}&=&\int_{\tilde{\varsigma}T_0}^{\infty}e^{a\int_{\tau_*}^{\tau}g'_{\varsigma}(r)dr}\: \overline{R}_{\varsigma}(\tau)d\tau\\
 		&\leq & \int_{\tilde{\varsigma}T_0}^{\infty}e^{-a\epsilon\lbrb{\tau-\tilde{\varsigma}T_0}}\lbrb{1+\tau^2}^{\frac{\ra-1}{2}}\lbrb{\overline{\varsigma}^2+\tau^2}^{\frac{1}{4}}d\tau \\ &=
 		&\int_{0}^{\infty}e^{-\frac{n}{\varsigma}\epsilon\tau}\lbrb{1+\lbrb{\tau+\tilde{\varsigma}T_0}^2}^{\frac{\ra-1}{2}}\lbrb{\overline{\varsigma}^2+\lbrb{\tau+\tilde{\varsigma}T_0}^2}^{\frac{1}{4}}d\tau.
 	\end{eqnarray*}
 	Since $\varsigma>K_0>2$ we have that $1/\varsigma<K^{-1}_0<1/2$, and, thus we get, by performing a change of variables,
 	\begin{equation*}
 		I_2\lbrb{a,\varsigma}\leq
 		\tilde{\varsigma}^{\frac{3}{2}} \int_{0}^{\infty}e^{-\lbrb{1-\frac{1}{K_0}}\epsilon\tau}\lbrb{1+\ov^2\lbrb{\tau+T_0}^2}^{\frac{\ra-1}{2}}\lbrb{1+\lbrb{\tau+T_0}^2}^{\frac{1}{4}}d\tau.
 	\end{equation*}
 	Again when $\ra\geq 1$ we get using that $\tilde{\varsigma}^2\lbrb{\tau+T_0}^2\geq T_0^2>0$ that for some $\tilde C=C(K_0,H_0)>0$ we have that
 	\[\lbrb{1+\tilde{\varsigma}^2\lbrb{\tau+T_0}^2}^{\frac{\ra-1}{2}}\leq \tilde{C}\tilde{\varsigma}^{\ra-1}\lbrb{\tau+T_0}^{\ra-1}\]
 	and we conclude, with some $C=C(K_0,H_0,\epsilon)>0$, that
 	\[I_2\lbrb{a,\varsigma}\leq C\varsigma^{\ra+\frac{1}{2}}.\]
 	Assume next that $0\leq \ra<1$. Clearly,
 	\[\sup_{\tau\geq 0}\lbrb{e^{-\frac{1}{2}\lbrb{1-\frac{1}{K_0}}\epsilon\tau}\lbrb{1+\lbrb{\tau+T_0}^2}^{\frac{1}{4}}}\leq C,\]
 	for some $C=C(H_0,K_0,\epsilon)>0$ and thus
 	\begin{eqnarray*}
 		I_2\lbrb{a,\varsigma} &\leq&
 		C\tilde{\varsigma}^{\frac{3}{2}} \int_{0}^{\infty}e^{-\frac{1}{2}\lbrb{1-\frac{1}{K_0}}\epsilon\tau}\lbrb{1+\tilde{\varsigma}^2\lbrb{\tau+T_0}^2}^{\frac{\ra-1}{2}}d\tau \\
 		&\leq& C T_0^{\ra-1}
 		\tilde{\varsigma}^{\ra+\frac{1}{2}}\int_{0}^{\infty}e^{-\frac{1}{2}\lbrb{1-\frac{1}{K_0}}\epsilon\tau}d\tau.
 	\end{eqnarray*}
 	Therefore, we again conclude that $I_2\lbrb{a,\varsigma}\leq C\varsigma^{\ra+\frac{1}{2}}$. Since $I\lbrb{a,\varsigma}=I_1\lbrb{a,\varsigma}+I_2\lbrb{a,\varsigma}$ we deduce the inequality \eqref{eq:Iimproved} and therefore conclude the proof of our lemma.

\subsection{Proof of Proposition \ref{thm:bound}}	
 	We are now ready to derive our upper bound for the  norms
 	of $\mathcal{R}_n$ in $\lnua$ and $\lnubb$.
 	
 	\subsection{The estimate \eqref{eq:bound_norm}}
 	Recall that $\ra=\alpha\beta_\alpha=1+\alpha\lbrb{\beta-1}$. Writing \[ F_n(x)=\Gamma(\alpha \beta +1) \wn^2(x) x^{-\beta_\alpha}e^{x^{\frac1\alpha}},\]
 	 we split the norms squared into three pieces as follows
 	 \begin{equation}
  ||\mathcal{R}_n||_{\eab}^2 =  \int^{\mathcal{B}_{\alpha} n^{\alpha}}_{0} F_n(x)dx + \int_{\mathcal{B}_{\alpha}n^{\alpha}}^{\overline{A}_{\alpha}n^{\alpha}} F_n(x)dx+\int_{\overline{A}_{\alpha}n^{\alpha}}^{\infty}F_n(x)dx,\end{equation}
  where we have set $\mathcal{B}_{\alpha}=2^{\alpha}\left(\frac{\baa^{\frac{1}{\alpha}}}{2} -\baa^{\frac{1}{\alpha+1}}\right)^{\alpha}= 2^{\alpha}\left(\frac{(\alpha+1)^{1+\frac{1}{\alpha}}}{2} -(\alpha+1)\right)^{\alpha}$. We have the following useful fact recalling that 	$\bca=\alpha^{\alpha}\frac{\cos^{\alpha+1}\lbrb{\frac{\pi}{2}\alpha}}{ \sin^{\alpha}\lbrb{\frac{\pi}{2}\alpha}}$ and $ \bba=\alpha^{\alpha}\frac{\csc\lbrb{\frac{\pi}{2(1+\alpha)}}}{\sin^{\alpha}\lbrb{\frac{\pi\alpha}{2(1+\alpha)}}}$.
  \begin{lemma} \label{lem:b}
  There exists $\bar{\alpha}>0$ such that, for any $0<\alpha<\bar{\alpha}$, $\mathcal{B}_{\alpha}>\bba$. Moreover, for any $\alpha \in (0,1),$ $\mathcal{B}_{\alpha}>\bca$.
  \end{lemma}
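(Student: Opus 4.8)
The plan is to first put $\mathcal{B}_{\alpha}$ in a more tractable form. Since $\baa=(1+\alpha)^{\alpha+1}$ we have $\baa^{\frac{1}{\alpha+1}}=1+\alpha$ and $\baa^{\frac1\alpha}=(1+\alpha)^{\frac{\alpha+1}{\alpha}}$, so
\[
\mathcal{B}_{\alpha}=2^{\alpha}\left(\tfrac{\baa^{1/\alpha}}{2}-\baa^{1/(\alpha+1)}\right)^{\alpha}=\left((1+\alpha)\left((1+\alpha)^{\frac1\alpha}-2\right)\right)^{\alpha}.
\]
Because $\alpha\mapsto(1+\alpha)^{1/\alpha}$ is strictly decreasing on $(0,1)$ from $e$ to $2$, the base is positive (hence $\mathcal{B}_{\alpha}>0$), and $\frac1\alpha\ln\mathcal{B}_{\alpha}=\ln\!\big((1+\alpha)((1+\alpha)^{1/\alpha}-2)\big)\to\ln(e-2)$ as $\alpha\downarrow0$. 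Likewise, writing $\csc\!\big(\tfrac{\pi}{2(1+\alpha)}\big)=\sec\!\big(\tfrac{\pi\alpha}{2(1+\alpha)}\big)$ and using $\alpha\ln\alpha\to0$, $\sin\!\big(\tfrac{\pi\alpha}{2(1+\alpha)}\big)\sim\tfrac{\pi\alpha}{2}$, $\sec\!\big(\tfrac{\pi\alpha}{2(1+\alpha)}\big)\to1$, one gets $\frac1\alpha\ln\bba\to\ln\tfrac2\pi$. Hence
\[
\tfrac1\alpha\big(\ln\mathcal{B}_{\alpha}-\ln\bba\big)\xrightarrow[\alpha\downarrow0]{}\ln\tfrac{\pi(e-2)}{2}>0,
\]
the positivity being precisely the numerical fact $\pi(e-2)=2.256\ldots>2$; so $\mathcal{B}_{\alpha}>\bba$ for all $\alpha$ in a right neighbourhood of $0$, which gives the existence of $\bar\alpha$.

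For the second assertion, since $t\mapsto t^{1/\alpha}$ is increasing on $(0,\infty)$ it suffices to prove $\mathcal{B}_{\alpha}^{1/\alpha}>\bca^{1/\alpha}$, i.e.
\[
(1+\alpha)\left((1+\alpha)^{\frac1\alpha}-2\right)>\alpha\,\frac{\cos^{1+\frac1\alpha}\!\big(\tfrac{\pi\alpha}{2}\big)}{\sin\!\big(\tfrac{\pi\alpha}{2}\big)},\qquad \alpha\in(0,1).
\]
I would bound the right-hand side using the elementary inequality $\cos t<e^{-t^{2}/2}$ on $(0,\tfrac\pi2)$ — immediate since $h(t)=\ln\cos t+\tfrac{t^2}{2}$ satisfies $h(0)=0$ and $h'(t)=t-\tan t<0$ there — applied with $t=\tfrac{\pi\alpha}{2}$ and split as $\cos^{1+\frac1\alpha}=\cos\cdot\cos^{\frac1\alpha}$, which yields $\cos^{1+\frac1\alpha}\!\big(\tfrac{\pi\alpha}{2}\big)<\cos\!\big(\tfrac{\pi\alpha}{2}\big)e^{-\pi^{2}\alpha/8}$. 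Thus the right-hand side is at most $\tfrac{\alpha}{\tan(\pi\alpha/2)}\,e^{-\pi^{2}\alpha/8}$, and it is enough to establish the one-variable inequality $\psi(\alpha)>0$ on $(0,1)$, where
\[
\psi(\alpha)=\ln(1+\alpha)+\ln\!\left((1+\alpha)^{\frac1\alpha}-2\right)+\ln\frac{\tan(\pi\alpha/2)}{\alpha}+\frac{\pi^{2}\alpha}{8}.
\]
Here $\ln(1+\alpha)\ge0$, $\ln\tfrac{\tan(\pi\alpha/2)}{\alpha}\ge\ln\tfrac\pi2>0$ (as $\tfrac{\tan t}{t}$ is increasing and $(1+\alpha)^{1/\alpha}-2\in(0,e-2)$ so the middle term is well-defined), and $\tfrac{\pi^{2}\alpha}{8}>0$; one checks $\psi(0^{+})=\ln\tfrac{\pi(e-2)}{2}>0$ and $\psi(\alpha)\to+\infty$ as $\alpha\uparrow1$, since $\ln\!\big((1+\alpha)^{1/\alpha}-2\big)\sim\ln(1-\alpha)$ is dominated by $\ln\tfrac{\tan(\pi\alpha/2)}{\alpha}\sim-\ln(1-\alpha)$.

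The remaining step — positivity of $\psi$ throughout $(0,1)$ — is the main technical obstacle. It is entirely elementary, but it requires care: the margin is smallest (about $0.12$ in the logarithmic scale) near $\alpha=0$, and any bound simple enough to use there tends to degrade near $\alpha=1$ and vice versa, so one must keep the Jordan factor $\alpha/\sin(\pi\alpha/2)$ effective near $0$ and the full power $\cos^{1+1/\alpha}(\pi\alpha/2)$ effective near $1$. Concretely I would either verify $\psi'(\alpha)>0$ on $(0,1)$ (a direct but somewhat lengthy differentiation, using monotonicity of $(1+\alpha)^{1/\alpha}$ and of $\tan(\pi\alpha/2)/\alpha$), or split $(0,1)$ into finitely many explicit subintervals and bound $\psi$ from below on each by replacing the constituent functions with monotone elementary majorants/minorants; combined with $\mathcal{B}_{\alpha},\bca>0$ this yields $\mathcal{B}_{\alpha}>\bca$ for all $\alpha\in(0,1)$, and in particular $\mathcal{B}_{\alpha}>\bca$ on $(0,\bar\alpha)$ is consistent with the first part via $\bca\le\bba$.
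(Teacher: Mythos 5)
Your treatment of the first assertion is fine and is essentially the paper's argument: both reduce to comparing $\lim_{\alpha\downarrow0}\mathcal{B}_{\alpha}^{1/\alpha}=e-2$ with $\lim_{\alpha\downarrow0}\bba^{1/\alpha}=\tfrac{2}{\pi}$ and using $e-2>\tfrac2\pi$. The second assertion, however, is not actually proved. After reducing the claim to the positivity on $(0,1)$ of
\[
\psi(\alpha)=\ln(1+\alpha)+\ln\!\left((1+\alpha)^{\frac1\alpha}-2\right)+\ln\frac{\tan(\pi\alpha/2)}{\alpha}+\frac{\pi^{2}\alpha}{8},
\]
you explicitly defer "the main technical obstacle" — positivity of $\psi$ throughout $(0,1)$ — to an unexecuted plan (either showing $\psi'>0$ or a finite subdivision with elementary majorants). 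Checking the endpoints and noting that the margin is about $0.12$ near $0$ does not establish the inequality on the whole interval; this interior verification is precisely the content of the lemma's harder half, and the paper itself devotes the bulk of its proof to it. The paper works with the equivalent inequality $f_1(\alpha)-f_2(\alpha)f_3(\alpha)>2(\alpha+1)$, where $f_1(\alpha)=(1+\alpha)^{\frac{\alpha+1}{\alpha}}$, $f_2(\alpha)=\alpha/\sin(\tfrac{\pi}{2}\alpha)$, $f_3(\alpha)=\cos^{1+\frac1\alpha}(\tfrac{\pi}{2}\alpha)$, and carries out the finite-covering argument concretely: it exploits concavity of $f_1$, convexity and monotonicity of $f_2$, crude bounds $f_3\le1$ near $0$ and $f_3\le\cos^2(\tfrac{\pi}{2}\alpha)$ near $1$, and constructs explicit increasing sequences $(\underline{\alpha}_n)$ of breakpoints covering $(0,\tfrac12]$ and $(\tfrac12,1]$. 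Your proposed strategy is of the same type, but without the execution the lemma remains unproven.

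A secondary inaccuracy: you assert $\psi(\alpha)\to+\infty$ as $\alpha\uparrow1$ because $\ln\bigl((1+\alpha)^{1/\alpha}-2\bigr)\sim\ln(1-\alpha)$ is "dominated by" $\ln\frac{\tan(\pi\alpha/2)}{\alpha}\sim-\ln(1-\alpha)$. These two logarithmic singularities cancel to leading order, since $(1+\alpha)^{1/\alpha}-2\sim(2\ln2-1)(1-\alpha)$ and $\tan(\pi\alpha/2)\sim\frac{2}{\pi(1-\alpha)}$; the limit is the finite quantity $\ln2+\ln(2\ln2-1)+\ln\frac{2}{\pi}+\frac{\pi^{2}}{8}\approx0.52$. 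It is still positive, so your reduction is not invalidated, but the stated asymptotic behaviour is wrong and should be corrected before it is used to control $\psi$ near $\alpha=1$.
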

  \begin{proof}
  	The first claim follows from the inequality $\lim_{\alpha \to 0}\frac{\mathcal{B}^{\frac{1}{\alpha}}_{\alpha}}{2} =\frac{e-2}{2}>\frac1\pi=\lim_{\alpha \to 0}\frac{\bba^{\frac{1}{\alpha}}}{2}$. Next, we write
  	$f_1(\alpha)=\left(\alpha+1\right)^{\frac{\alpha+1}{\alpha}}$ and $\bca^{\frac1\alpha}=f_2(\alpha)f_3(\alpha)$, where $f_2(\alpha)= \frac{\alpha}{\sin \left(\frac{\pi }{2}\alpha\right)}$ and $f_3(\alpha)=\cos^{1+\frac{1}{\alpha}} \left(\frac{\pi }{2}\alpha\right)$. Note that $\mathcal{B}_{\alpha}>\bca$ is equivalent to  $f_1(\alpha)-f_2(\alpha)f_3(\alpha)>2\lbrb{\alpha+1}$. We have that $f_2$ is non-decreasing convex on $(0,1)$ and since  $f_1'(\alpha)=\frac{ \left(\alpha+1\right)^{\frac{\alpha+1}{\alpha}}}{\alpha^2}\left(\alpha -\ln(\alpha+1)\right)$ and $f_1''(\alpha)=\frac{f_1(\alpha) }{\alpha^4}\left(\ln^2(\alpha+1)-\frac{\alpha^2}{\alpha+1}\right)$, we deduce that $f_1$ is concave on $(0,1)$ with  $\lim_{\alpha \to 0} f_1(\alpha)-f_2(\alpha)=e-\frac{2}{\pi}>2$, and as $f_3(\alpha)\leq 1$, we have that there exists $\underline{\alpha}_1>0$ such that for any $0<\alpha <\underline{\alpha}_1$
  	\[f_1(\alpha)-f_2(\alpha)f_3(\alpha)\geq f_1(\alpha)-f_2(\alpha)> 2 \left(\alpha+1\right).\]
  	Repeating this argument, one constructs an increasing  sequence $(\underline{\alpha}_n)$, with $n\leq 10$, where $\underline{\alpha}_{n+1}$ is obtained from $\underline{\alpha}_{n}$  by using the bound for $\alpha \in [\underline{\alpha}_n,\frac12]$,  $f_3(\alpha)=\cos^{\frac{\alpha+1}{\alpha}}\left(\frac{\pi }{2}\alpha\right)\leq\cos^{3}\left(\frac{\pi }{2}\underline{\alpha}_n\right)<1$ yielding to the second claim in the case $\alpha\leq \frac12$. Now assume that $\alpha>\frac12$. Since $f_3(\alpha)\leq \bar{f}_3(\alpha) =\cos^2(\frac{\alpha \pi}{2})$ with $\bar{f}''_3(\alpha)=-\frac12 \pi^2 \cos(\pi \alpha)>0$ for $\alpha \in (\frac12,1]$  and $\lim_{\alpha \to 1} f_1'(\alpha)-f_3'(\alpha)=4(1-\ln(2))\approx 1.23 <2$ and as above one may construct a   sequence $(\underline{\alpha}_n)$ with $\underline{\alpha}_1=1$ and $\underline{\alpha}_4<\frac12$,  such that for any $\alpha \in (\frac12,\underline{\alpha}_n)$, $f_2(\alpha)\leq \frac{\underline{\alpha}_n}{\sin(\underline{\alpha}_n\frac{\pi}{2})}$ which guarantees the existence of $\underline{\alpha}_{n+1}<\underline{\alpha}_n$, such that for any $\alpha \in [\underline{\alpha}_{n+1},\underline{\alpha}_n] $,
  			\[\left(\alpha+1\right)^{\frac{\alpha+1}{\alpha}}-\frac{\underline{\alpha}_n}{\sin(\underline{\alpha}_n\frac{\pi}{2})}\bar{f}_3(\alpha)> 2(\alpha +1),\]
  	 	which completes the proof.
  	 	\end{proof}
  	 	  	 	
  	  	Recall that $\beta_{\alpha}=\beta+1-\frac{1}{\alpha}$. Then for each range we have the following estimates for large $n$.	
 	 \begin{enumerate}[1)] \item  On $ (0,\mathcal{B}_{\alpha}n^{\alpha}),$  we have from the estimate
 	 	  \eqref{eq:EstimateTvNU}, with $a=-\frac{\beta_{\alpha}}{2}$, $x=\kappa n^\alpha, \kappa<\mathcal{B}^{\frac1\alpha}_{\alpha}$ and any $0<\epsilon<\alpha$, that, for large $n$,
 	 	  \[ F_n(x)\leq C n^{\beta_{\alpha}+\alpha\beta_\alpha+3} e^{2n\ln\lb \csc\lbrb{\frac{(\alpha-\epsilon)\pi}{2}}\rb }  e^{x^{\frac1\alpha}} \leq C n^{\beta_{\alpha}+\alpha\beta_\alpha+3} e^{2n\lb\ln\lb \csc\lbrb{\frac{(\alpha-\epsilon)\pi}{2}}\rb + \frac{\mathcal{B}_{\alpha}^{\frac1\alpha}}{2}\rb } \]
 	 	  and hence
 	 	   \begin{eqnarray*}
 	 	   \int_{0}^{\mathcal{B}_{\alpha}n^{\alpha}} F_n(x)dx  &=& \bo{ n^{\beta_{\alpha}+\alpha\beta_\alpha+3+\alpha} e^{2n\left(-\ln\lb \sin\lbrb{\frac{(\alpha-\epsilon)\pi}{2}}\rb + \frac{1}{2}(\alpha+1)^{\frac{\alpha+1}{\alpha}}-(\alpha+1)\right)}}.\end{eqnarray*}
 	 \item On $ (\mathcal{B}_{\alpha} n^{\alpha},\overline{A}_{\alpha}n^{\alpha})$. Since, from Lemma \ref{lem:b}, $\mathcal{B}_{\alpha}>\bca$ and for small $\alpha$, $\mathcal{B}_{\alpha}>\bba$, we can use the estimates  \eqref{eq:middleX} to get
 	  \begin{eqnarray*}
 	  \int^{\overline{A}_{\alpha}n^{\alpha}}_{\mathcal{B}_{\alpha}n^{\alpha}} F_n(x)dx  &=& \bo{ n^{\alpha+\alpha\beta_\alpha}e^{2n\lb-\ln(\alpha)+\frac12(\alpha+1)^{\frac{\alpha+1}{\alpha}}-(\alpha+1)\rb}}.
 	  \end{eqnarray*}
 	 	\item  On $ (\overline{A}_{\alpha}n^{\alpha}, \infty)$. Let $ \eta=\frac{1+\epsilon}2$ for any $0<\epsilon<\frac{1}{2}$  we have, from \eqref{eq:MellinLargeY},
 	 	 \[ F_n(x) \leq C_{\alpha}  e^{-2 n\ln\lbrb{(\frac12+\epsilon)^{-\alpha}-1}}\:x^{\beta_{\alpha}+\frac1\alpha}e^{-\epsilon x^{\frac{1}{\alpha}}} \]
 	 	 and thus, we get, for any $0<\varepsilon<\epsilon$,  that
 	 	\[ \int_{\overline{A}_{\alpha}n^{\alpha}}^{\infty}F_n(x)dx \leq C_{\alpha}   e^{-2 n\ln\lbrb{(\frac12+\epsilon)^{-\alpha}-1}}e^{-\varepsilon n \overline{A}^{\frac1\alpha}_{\alpha} } .\]
 	 Hence, 
 	 	\begin{equation*}  \int_{\overline{A}_{\alpha}n^{\alpha}}^{\infty}F_n(x)dx = \bo{ e^{2 nT_{\alpha}}}.\end{equation*}
 	 \end{enumerate}
 	 To conclude the proof of the estimate \eqref{eq:bound_norm}, we note that, for all $\alpha \in (0,1)$,  \[T_{\alpha}=-\ln\lbrb{2^\alpha-1}\geq -\ln(\alpha)+\frac12(\alpha+1)^{\frac{\alpha+1}{\alpha}}-(\alpha+1)\geq -\ln(\sin\left(\frac{\alpha \pi}{2}\right))+\frac12(\alpha+1)^{\frac{\alpha+1}{\alpha}}-(\alpha+1), \] where the first inequality follows from \eqref{eq:Haeta} with $\eta=\frac12$ and the second from $\sin\lbrb{\frac{\alpha\pi}{2}}\leq\alpha,\forall\alpha\in\lbrb{0,1}$.
 \subsubsection{The estimate \eqref{eq:bound_norm_new}}

 We recall that for any $0<\gamma<\alpha$ and $\ew>0$ fixed
 \begin{equation*}
 	\ebb(x)=x^{\beta+\frac1\alpha-1} e^{\ew x^{\frac1\gamma}}, \: x>0,
 \end{equation*}
 and as above, writing,
 \[ \bar{F}_n(x)= \wn^2(x) x^{-\beta-\frac1\alpha+1}e^{-\ew x^{\frac1\gamma}},\]
 we split the norms squared into four pieces as follows
 \begin{equation*}
 	||\mathcal{R}_n||_{\ebb}^2 =  \int^{1}_{0} \bar{F}_n(x)dx + \int_{1}^{\overline{K}_{\alpha}n^{\alpha}}\bar{F}_n(x)dx+\int_{\overline{K}_{\alpha}n^{\alpha}}^{\baa n^{\alpha}}\bar{F}_n(x)dx+\int_{\baa n^{\alpha}}^{\infty}\bar{F}_n(x)dx,
 	\end{equation*}
 where  	  $\overline{K}_{\alpha}=e^{-2\alpha-\epsilon} \lbrb{\ratio{\alpha}}^{\alpha}$.
 \begin{enumerate}[1)]
 	\item On $(0,1)$ and $(1,\overline{K}_{\alpha} n^{\alpha})$, we use the bound \eqref{eq:crude_bound}, which yields that
 	\[ \bar{F}_n(x) \leq C n (nx)^{\beta+\frac1\alpha} e^{2\bar{\mathfrak{t}}_{\alpha}(nx)^{\frac{1}{\alpha+1}} -\ew x^{\frac1\gamma}}, \]
 	and thus, on the one hand,
 	\begin{eqnarray*}
 		\int^{1}_{0} \bar{F}_n(x)dx &\leq& C n^{1+\beta+\frac1\alpha} \int^{1}_{0}  x^{\beta+\frac1\alpha} e^{2\bar{\mathfrak{t}}_{\alpha}(nx)^{\frac{1}{\alpha+1}} -\ew x^{\frac1\gamma}} dx\\
 		&\leq &  C n^{1+\beta+\frac1\alpha} e^{2\bar{\mathfrak{t}}_{\alpha}n^{\frac{1}{\alpha+1}}}.
 	\end{eqnarray*}	
 	On the other hand,
 	\begin{eqnarray*}
 		\int^{\overline{K}_{\alpha} n^{\alpha}}_{1} \bar{F}_n(x)dx &\leq& C_{\alpha}n^{1+\beta+\frac1\alpha} \int^{K_{\alpha} n^{\alpha}}_{1}  x^{\beta+\frac1\alpha} e^{2\bar{\mathfrak{t}}_{\alpha}(nx)^{\frac{1}{\alpha+1}} -\ew x^{\frac1\gamma}} dx\\
 		&=& C_{\alpha}n^{1+\beta+\frac1\alpha+\alpha\beta +\alpha +1} \int^{\overline{K}_{\alpha}}_{n^{-\alpha}}  v^{\beta+\frac1\alpha} e^{n\lb 2\bar{\mathfrak{t}}_{\alpha}v^{\frac{1}{\alpha+1}} -\ew n^{\frac{\alpha}{\gamma}-1} v^{\frac1\gamma}\rb} dv.
 	\end{eqnarray*}
 	Next with $g_{n}(v)= 2\bar{\mathfrak{t}}_{\alpha}v^{\frac{1}{\alpha+1}} -\ew n^{\frac{\alpha}{\gamma}-1} v^{\frac1\gamma}$, we have $g'_{n}(v)=v^{\frac{1}{\alpha+1}-1}\lb \frac{2\bar{\mathfrak{t}}_{\alpha}}{\alpha+1} - \frac{\ew}{\gamma} n^{\frac{\alpha}{\gamma}-1} v^{\frac1\gamma-\frac{1}{\alpha+1}}\rb$ and simple computation yields that $g'_{n}(v) <0 $ on $(n^{-\alpha}, \overline{K}_{\alpha})$ and thus
 	\begin{eqnarray*}
 		\int^{\overline{K}_{\alpha} n^{\alpha}}_{1} \bar{F}_n(x)dx &\leq& C_{\alpha}n^{1+\beta+\frac1\alpha+\alpha}e^{2\bar{\mathfrak{t}}_{\alpha}n^{\frac{1}{\alpha+1}}}.
 	\end{eqnarray*}
 	Putting pieces together, we get
 	\begin{eqnarray}\label{eq:bn1}
 	\int^{K_{\alpha} n^{\alpha}}_{0} \bar{F}_n(x)dx &=& \bo{ n^{1+\beta+\frac1\alpha+\alpha}e^{2\bar{\mathfrak{t}}_{\alpha}n^{\frac{1}{\alpha+1}}}}.
 	\end{eqnarray}	
 	\item  On $ (\overline{K}_{\alpha} n^{\alpha},\baa n^{\alpha} )$. First, note that since $\alpha \mapsto \overline{K}_{\alpha}$ (resp.~$\alpha \mapsto  \baa$) is non-increasing (resp.~non decreasing) and $\lim_{\alpha \to 0} \overline{K}_{\alpha} = e^{-\epsilon} <\lim_{\alpha \to 0} \baa=1$, we have, $\overline{K}_{\alpha}<\baa$ for all $\alpha \in (0,1)$.  Then, using the bound  \eqref{eq:EstimateTvNU}, with $a=-\frac{\beta_{\alpha}}{2}$, and any $0<\epsilon<\alpha$, we get
 	\[ F_n(x)\leq C n^{\beta_{\alpha}+3} e^{2n\ln\lb \csc\lbrb{\frac{(\alpha-\epsilon)\pi}{2}}\rb }  e^{-\overline{K}^{\frac{1}{\gamma}}_{\alpha} n^{\frac{\alpha}{\gamma}}},  \]
 	and, hence
 	\begin{eqnarray} \label{eq:bn2}
 	\int_{\overline{K}_{\alpha} n^{\alpha}}^{\baa n^{\alpha}} \bar{F}_n(x)dx &=& \bo{ n^{\beta_{\alpha}+3+\alpha} e^{2n\ln\lb \csc\lbrb{\frac{(\alpha-\epsilon)\pi}{2}}\rb} e^{-\overline{K}^{\frac{1}{\gamma}}_{\alpha} n^{\frac{\alpha}{\gamma}}}}.\end{eqnarray}	
 	\item On $ (\baa n^{\alpha}, \infty)$, we use the bound \eqref{eq:MellinLargeY}, with $\eta=0$, to get
 	\begin{eqnarray}\label{eq:MellinInversion3}
 	\nonumber \bar{F}_n(x)
 	&\leq& C_{\alpha} x^{\beta_{\alpha}+\frac1\alpha}e^{2n(-\ln(\alpha)-(\alpha+1))}e^{-\ew x^{\frac1\gamma} },
 	\end{eqnarray}
 	and, hence for any $0<\varepsilon<\ew$, we have
 	\begin{eqnarray} \label{eq:bn3}
 	\int_{\baa n^{\alpha}}^{\infty} \bar{F}_n(x)dx &=&\bo{e^{-2n\ln(\alpha)-\varepsilon n^{\frac{\alpha}{\gamma}}}}.
 	\end{eqnarray}
 	The proof is completed by combining \eqref{eq:bn1}, \eqref{eq:bn2} and \eqref{eq:bn3}.
 \end{enumerate}

  \section{Proof of Theorem \ref{thm:main}} \label{sec:proof1}
  We have now all the ingredients to complete the proof of Theorem \ref{thm:main}. First, from the intertwining relation \eqref{eq:inter} and the expansion \eqref{eq:expansionLaguerre} of the Laguerre semigroup of order $\beta=0$,   we get, in the $\lnua$ topology, that for any $f \in \Le$,  $t >0$,
  \begin{eqnarray*}
  	P_t \Lambda_{\alpha,\beta}\: f(x) &=& \Lambda_{\alpha,\beta}\: Q_t f(x)  = \Lambda_{\alpha,\beta}\: \sum_{n=0}^{\infty}e^{-n t} \langle f,\mathcal{L}_n \rangle_{\e} \:  \: \mathcal{L}_n(x)
  	=  \sum_{n=0}^{\infty}e^{-n t} \langle f,\mathcal{L}_n \rangle_{\e} \:  \: \mathcal{P}_n(x),
  \end{eqnarray*}
  where the last identity is justified by the Bessel property of the sequence $(\mathcal{P}_n)$ combined with the fact that for any $f \in \Le$, the sequence $(\langle f,\mathcal{L}_n \rangle_{\e}) \in \ell^2(\N)$. Next since from Proposition \ref{prop:Kernel},  $\overline{{\rm{Ran }}}(\Lambda_{\alpha,\beta})\: = \lnua$ and ${\rm{Ker }}(\Lambda_{\alpha,\beta}) =\{\emptyset\}$, we have that $\Lambda^{-1}_{\alpha,\beta}$  is densely defined from $\lnua$ into $\Le$ and thus, for any $f \in {{\rm{Ran }}} (\Lambda_{\alpha,\beta})$ and $t>0$,
  \begin{eqnarray*}
  	P_t f(x) &=&  \sum_{n=0}^{\infty}e^{-n t} \langle \Lambda^{-1}_{\alpha,\beta}\: f,\mathcal{L}_n \rangle_{\e} \:  \: \mathcal{P}_n(x) \quad \textrm{ in } \lnua.
  \end{eqnarray*}
  Note that the two linear operators coincide on a dense subset of $\lnua$.  Using now \eqref{eq:eq_rn}, we deduce that, for any  $f \in {{\rm{Ran}}} (\Lambda_{\alpha,\beta})$ and  $t>0$,
  \begin{eqnarray} \label{eq:exp_dense}
  P_t f &=&  S_t f  \quad \textrm{ in } \lnua
  \end{eqnarray}
  where we have set
  \begin{equation} \label{eq:def_spe}
S_tf =\sum_{n=0}^{\infty}\left\langle P_t f,\mathcal{R}_n\right\rangle_{\eab}\mathcal{P}_n =\sum_{n=0}^{\infty}e^{-n t} \langle  f,\mathcal{R}_n \rangle_{\eab} \:  \: \mathcal{P}_n.
\end{equation}
From again the Bessel property  of the sequence $(\mathcal{P}_n)$, we have that the domain of $S_t$ is $\mathcal{D}(S_t) =\{ f \in \lnua; \: \left(e^{-n t} \langle  f,\mathcal{R}_n \rangle_{\eab}\right) \in \ell^2(\N)\}$.
Next, an application of the Cauchy-Schwarz inequality yields,  for any $f \in \lnua$, some $\epsilon>0$ and  $n$ large,
  \begin{eqnarray*}
  \left| \langle  f,\mathcal{R}_n \rangle_{\eab} \right| &\leq& ||f ||_{\eab}  ||\mathcal{R}_n ||_{\eab} \leq e^{nT_{\alpha}} ||f ||_{\eab}, \label{eq:est_b}
  \end{eqnarray*}
where we have used the bounds \eqref{eq:bound_norm}. Thus, for any $t>T_{\alpha}$, $ \mathcal{D}(S_t)=\lnua$ and using the synthesis operator as defined in \eqref{eq:def_synt}, we get that $S_t$  extends to a bounded linear operator in $\lnua$. Hence, from \eqref{eq:exp_dense}, since $\overline{\rm{Ran}}(\Lambda_{\alpha,\beta})=\lnua$, we conclude,  by an uniqueness argument, that for all $f\in \lnua$ and $t>T_{\alpha}$, $P_t =S_t$. Next, by means of the bound \eqref{eq:bound_norm_new}, we have for any $f \in \lnubb$, $n$ large and some $\epsilon>0$,
   \begin{eqnarray*}
  \left| \spnu{f,\mathcal{R}_n}{\eab } \right| &=& \spnu{f,\Rn\frac{\eab}{\ebb}}{\ebb}\leq ||f||_{\ebb} \left|\left|\Rn\frac{\eab}{\ebb}\right|\right|_{\ebb} \leq e^{(\bar{\mathfrak{t}}_{\alpha}+\epsilon)n^{\frac{1}{\alpha+1}}}||f||_{\ebb}. \label{eq:bound_cs}
  \end{eqnarray*}
Thus, plainly, for all $t>0$, $\lnubb \subseteq \mathcal{D}(S_t)$. Then,
as $(\mathcal{R}_n,\mathcal{P}_n)_{n\geq0}$ is a biorthogonal sequence, see Proposition \ref{prop:sequence}, we deduce from \eqref{eq:def_spe} that, for any $f \in \lnubb$, $t>0$ and  $m\geq0$,
\begin{equation*} \label{eq:sorth}
\left\langle S_t f,\mathcal{R}_m \right\rangle_{\nu} =\left\langle P_t f,\mathcal{R}_m\right\rangle_{\nu},
\end{equation*}
that is  $ S_t f - P_t f \in {\rm{Span}}(\mathcal{R}_n)^{\perp}$.  However, since from Proposition \ref{prop:sequence}, $\overline{{\rm{Span}}}(\mathcal{R}_n)=\lnua$, we conclude that $S_t f= P_t f$ in $\lnua$, as in a Hilbert space the notion of complete and total sequence coincide. Next using the bound \eqref{eq:asympt_polyn}, we get that, for any $p\in \N$ and $0\leq x<K$, for any $K>0$,
  \begin{eqnarray*} \label{eq:exp_deriva}
  	\left| \sum_{n=p}^{\infty}e^{-n t} \langle  f,\mathcal{R}_n \rangle_{\eab} \:  \: \mathcal{P}^{(p)}_{n}(x) \right|  &\leq & \sum_{n=p}^{\infty}e^{-n t} |\langle  f,\mathcal{R}_n \rangle_{\eab}| \: |\mathcal{P}^{(p)}_{n}(x)| \\
  	&\leq &C \sum_{n=p}^{\infty}e^{-n t} |\langle  f,\mathcal{R}_n \rangle_{\eab}| \:  n^{p+\frac12} e^{\mathfrak{t}_{\alpha}(nx)^{\frac{1}{1+\alpha}}}\\
  	&\leq & C \sum_{n=p}^{\infty}e^{-n t +\mathfrak{t}_{\alpha}(nK)^{\frac{1}{1+\alpha}}} |\langle  f,\mathcal{R}_n \rangle_{\eab}| \:  n^{p+\frac12},
  \end{eqnarray*} 	
  where,  from the preceding discussion, the last term is finite  whenever $t>0$ and $f  \in {{\rm{Ran}}}(\Lambda_{\alpha,\beta}) \cup \lnubb $ or $t>T_{\alpha}$ and $f \in \lnua$. Similarly, using in addition the bound \eqref{eq:crude_bound}, we get for any   integer $q$,  $0\leq x<K, 0<y<\bar{K}$, $K,\bar{K}>0$,
  \begin{eqnarray} \label{eq:ex_deriva}
  	\left| \sum_{n=p}^{\infty}e^{-n t} \mathcal{W}^{(q)}_n (y) \:  \: \mathcal{P}^{(p)}_{n}(x) \right|
  	&\leq & C y^{\beta + \frac{1}{\alpha}-q}\sum_{n=p}^{\infty}e^{-nt}n^{p+\frac12+\labs\beta + \frac{1}{\alpha}-1-q\rabs+2} e^{\widehat{K}_{\alpha}n^{\frac{1}{\alpha+1}}},
  \end{eqnarray}
  where $\widehat{K}_{\alpha}= \bar{\mathfrak{t}}_{\alpha}\bar{K}^{\frac{1}{\alpha+1}}   +\mathfrak{t}_{\alpha}K^{\frac{1}{1+\alpha}}$.
  Hence, we conclude easily that, for any $p,k \in \N$ and for such $t$ and $f$,
  \begin{eqnarray*} \label{eq:exp_derv}
  	\frac{d^k}{dt^k}(P_t f)^{(p)}(x) =\sum_{n=p}^{\infty}(-n)^k e^{-n t} \langle  f,\mathcal{R}_n \rangle_{\eab} \:  \: \mathcal{P}^{(p)}_{n}(x) ,
  \end{eqnarray*} 	
  where the series is locally uniformly convergent on $\R^+$.
  This combined with \eqref{eq:crude_bound}, which is uniform  in $y\in\lbrb{a,b}$, for large $n$, for any fixed couple $0<a<b<\infty$, gives that
    \begin{eqnarray*} \label{eq:exp_dervall}
  \frac{d^k}{dt^k}P^{(p,q)}_t(x,y) =\sum_{n=p}^{\infty}(-n)^k e^{-n t} \mathcal{W}^{(q)}_n(y)   \: \mathcal{P}^{(p)}_{n}(x),
  \end{eqnarray*}
  where the series is locally uniformly convergent on $\R^3_+$. Finally, observe that on the one hand, for all $t\geq0$,  $P_t\mathcal{P}_0(x) =1$, and hence according to \cite{Schilling_pos}, $(P_t)_{t\geq0}$ is a $C_b$-Feller semigroup. On the other hand, from \eqref{eq:ex_deriva}, we get that for all $t>0$, $(x,y)\mapsto P_t(x,y)$  is locally bounded and the strong Feller property follows from \cite[Corollary 2.2]{Schilling_SF}, which completes the proof of the Theorem.

\bibliographystyle{plain}


\begin{thebibliography}{10}
	
	\bibitem{Akhiezer-65}
	N.I. Akhiezer.
	\newblock {\em The Classical Moment Problem}.
	\newblock Oliver and Boyd, 1965.
	
	\bibitem{Bakry_Book}
	D.~Bakry, I.~Gentil, and M.~Ledoux.
	\newblock {\em Analysis and geometry of {M}arkov diffusion operators}, volume
	348 of {\em Grundlehren der Mathematischen Wissenschaften [Fundamental
		Principles of Mathematical Sciences]}.
	\newblock Springer, Cham, 2014.
	
	\bibitem{Bartholme_Patie}
	C.~Bartholm\'e and P.~Patie.
	\newblock Generalized caputo operators and their invariant functions.
	\newblock {\em Preprint}, 2014.
	
	\bibitem{Carmona-Petit-Yor-98}
	Ph. Carmona, F.~Petit, and M.~Yor.
	\newblock Beta-{gamma} random variables and intertwining relations between
	certain {Markov} processes.
	\newblock {\em Rev. Mat. Iberoamericana}, 14(2):311--368, 1998.
	
	\bibitem{Chihara-68}
	T.~S. Chihara.
	\newblock Orthogonal polynomials with {B}renke type generating functions.
	\newblock {\em Duke Math. J.}, 35:505--517, 1968.
	
	\bibitem{Christensen-03}
	O.~Christensen.
	\newblock {\em An Introduction to Frames and Riesz Bases,}.
	\newblock Birkhäuser, 2003.
	
	\bibitem{Craven-Csordas-89_Jensen_Turan}
	T.~Craven and G.~Csordas.
	\newblock Jensen polynomials and the {T}ur\'an and {L}aguerre inequalities.
	\newblock {\em Pacific J. Math.}, 136(2):241--260, 1989.
	
	\bibitem{Davies}
	E.~B. Davies.
	\newblock Pseudo-spectra, the harmonic oscillator and complex resonances.
	\newblock {\em R. Soc. Lond. Proc. Ser. A Math. Phys. Eng. Sci.},
	455(1982):585--599, 1999.
	
	\bibitem{Davies_S}
	E.~B. Davies.
	\newblock Non-self-adjoint differential operators.
	\newblock {\em Bull. London Math. Soc.}, 34(5):513--532, 2002.
	
	\bibitem{Davies-Kuijlaars}
	E.~B. Davies and A.~B.~J. Kuijlaars.
	\newblock Spectral asymptotics of the non-self-adjoint harmonic oscillator.
	\newblock {\em J. London Math. Soc. (2)}, 70(2):420--426, 2004.
	
	\bibitem{Delsarte-Lions-57}
	J.~Delsarte and J.L. Lions.
	\newblock Transmutations d'op\'erateurs diff\'erentielles dans le domaine
	complexe.
	\newblock {\em Comment. Math. Helv.}, 52:113--128, 1957.
	
	\bibitem{GL_Fragm}
	M.~Doumic~Jauffret and P.~Gabriel.
	\newblock Eigenelements of a general aggregation-fragmentation model.
	\newblock {\em Math. Models Methods Appl. Sci.}, 20(5):757--783, 2010.
	
	\bibitem{Duffin-Schaeffer-52}
	R.~Duffin and A.~Schaeffer.
	\newblock A class of non-harmonic fourier series.
	\newblock {\em Trans. Amer. Math. Soc.}, 72:341--366, 1952.
	
	\bibitem{Dunford_II}
	N.~Dunford and J.~T. Schwartz.
	\newblock {\em Linear operators. {P}art {II}: {S}pectral theory. {S}elf adjoint
		operators in {H}ilbert space}.
	\newblock With the assistance of William G. Bade and Robert G. Bartle.
	Interscience Publishers John Wiley \& Sons\ New York-London, 1963.
	
	\bibitem{Dynkin-69}
	E.~B. Dynkin.
	\newblock {\em Markov processes. {V}ols. {I}, {II}}, volume 122 of {\em
		Translated with the authorization and assistance of the author by J. Fabius,
		V. Greenberg, A. Maitra, G. Majone. Die Grundlehren der Mathematischen Wi
		ssenschaften, B\"ande 121}.
	\newblock Academic Press Inc., Publishers, New York, 1965.
	
	\bibitem{Erdelyi-55}
	A.~Erd{\'e}lyi, W.~Magnus, F.~Oberhettinger, and F.G. Tricomi.
	\newblock {\em Higher Transcendental Functions}, volume~3.
	\newblock McGraw-Hill, New York-Toronto-London, 1955.
	
	\bibitem{Frenzen-Wong}
	C.~L. Frenzen and R.~Wong.
	\newblock Uniform asymptotic expansions of {L}aguerre polynomials.
	\newblock {\em SIAM J. Math. Anal.}, 19(5):1232--1248, 1988.
	
	\bibitem{Getoor_Spectral}
	R.~K. Getoor.
	\newblock Markov operators and their associated semi-groups.
	\newblock {\em Pacific J. Math.}, 9:449--472, 1959.
	
	\bibitem{Higgins}
	J.R. Higgins.
	\newblock {\em Completeness and basis properties of sets of special functions}.
	\newblock Cambridge University Press, Cambridge-New York-Melbourne, 1977.
	\newblock Cambridge Tracts in Mathematics, Vol. 72.
	
	\bibitem{Ismail-Zhang}
	M.~E.~H. Ismail and R.~Zhang.
	\newblock Diagonalization of certain integral operators.
	\newblock {\em Adv. Math.}, 109(1):1--33, 1994.
	
	\bibitem{Kost}
	B.~Kostant.
	\newblock On {L}aguerre polynomials, {B}essel functions, {H}ankel transform and
	a series in the unitary dual of the simply-connected covering group of {${\rm
			Sl}(2,{\bf R})$}.
	\newblock {\em Represent. Theory}, 4:181--224 (electronic), 2000.
	
	\bibitem{Lamperti-62}
	J.~Lamperti.
	\newblock Semi-stable stochastic processes.
	\newblock {\em Trans. Amer. Math. Soc.}, 104:62--78, 1962.
	
	\bibitem{Lamperti-72}
	J.~Lamperti.
	\newblock Semi-stable {M}arkov processes. {I}.
	\newblock {\em Z. Wahrsch. Verw. Geb.}, 22:205--225, 1972.
	
	\bibitem{McKean-Spectral}
	H.~P. McKean, Jr.
	\newblock Elementary solutions for certain parabolic partial differential
	equations.
	\newblock {\em Trans. Amer. Math. Soc.}, 82:519--548, 1956.
	
	\bibitem{Misra-Lavoine}
	O.~P. Misra and J.~L. Lavoine.
	\newblock {\em Transform analysis of generalized functions}, volume 119 of {\em
		North-Holland Mathematics Studies}.
	\newblock North-Holland Publishing Co., Amsterdam, 1986.
	\newblock Notas de Matem{\'a}tica [Mathematical Notes], 106.
	
	\bibitem{Olver_W}
	F.~W.~J. Olver.
	\newblock Whittaker functions with both parameters large: uniform
	approximations in terms of parabolic cylinder functions.
	\newblock {\em Proc. Roy. Soc. Edinburgh Sect. A}, 86(3-4):213--234, 1980.
	
	\bibitem{Paris01}
	R.~B. Paris and D.~Kaminski.
	\newblock {\em Asymptotics and {M}ellin-{B}arnes integrals}, volume~85 of {\em
		Encyclopedia of Mathematics and its Applications}.
	\newblock Cambridge University Press, Cambridge, 2001.
	
	\bibitem{Patie-Savov-GeL}
	P.~Patie and M.~Savov.
	\newblock
	\href{https://www.researchgate.net/publication/277712416_Spectral_expansions_of_non-self-adjoint_generalized_Laguerre_semigroups}{Spectral
		expansion of non-self-adjoint generalized {Laguerre semigroups}}.
	\newblock {\em Submitted,}, page 165 (current version), 2015.
	
	\bibitem{Pitman-Rogers-81}
	J.~Pitman and L.G. Rogers.
	\newblock Markov functions.
	\newblock {\em Ann. Probab.}, 9:573--582, 1981.
	
	\bibitem{Rooney-Muck}
	P.~G. Rooney.
	\newblock Multipliers for the {M}ellin transformation.
	\newblock {\em Canad. Math. Bull.}, 25(3):257--262, 1982.
	
	\bibitem{Sato-99}
	K.~Sato.
	\newblock {\em L\'evy Processes and Infinitely Divisible Distributions}.
	\newblock Cambridge University Press, Cambridge, 1999.
	
	\bibitem{Schilling_pos}
	R.~L. Schilling.
	\newblock Conservativeness and extensions of {F}eller semigroups.
	\newblock {\em Positivity}, 2(3):239--256, 1998.
	
	\bibitem{Schilling_SF}
	R.~L. Schilling and J.~Wang.
	\newblock Strong {F}eller continuity of {F}eller processes and semigroups.
	\newblock {\em Infin. Dimens. Anal. Quantum Probab. Relat. Top.},
	15(2):1250010, 28, 2012.
	
	\bibitem{Sjostrand-Survey}
	J.~Sj{\"o}strand.
	\newblock Some results on nonselfadjoint operators: a survey.
	\newblock In {\em Further progress in analysis}, pages 45--74. World Sci.
	Publ., Hackensack, NJ, 2009.
	
	\bibitem{Stein}
	Elias~M. Stein.
	\newblock {\em Topics in harmonic analysis related to the {L}ittlewood-{P}aley
		theory.}
	\newblock Annals of Mathematics Studies, No. 63. Princeton University Press,
	Princeton, N.J.; University of Tokyo Press, Tokyo, 1970.
	
	\bibitem{Szego}
	G.~Szego.
	\newblock {\em Orthogonal polynomials}.
	\newblock American Mathematical Society, Providence, R.I., fourth edition,
	1975.
	\newblock American Mathematical Society, Colloquium Publications, Vol. XXIII.
	
	\bibitem{Temme}
	N.~M. Temme.
	\newblock Asymptotic estimates for {L}aguerre polynomials.
	\newblock {\em Z. Angew. Math. Phys.}, 41(1):114--126, 1990.
	
	\bibitem{Titchmarsh39}
	E.C. Titchmarsh.
	\newblock {\em The theory of functions}.
	\newblock Oxford University Press, Oxford, 1939.
	
	\bibitem{Young}
	Robert~M. Young.
	\newblock {\em An introduction to nonharmonic {F}ourier series}.
	\newblock Academic Press, Inc., San Diego, CA, first edition, 2001.
	
	\bibitem{GL_Biology}
	R.~Yvinec, C.~Zhuge, J.~Lei, and M.~C. Mackey.
	\newblock Adiabatic reduction of a model of stochastic gene expression with
	jump {M}arkov process.
	\newblock {\em J. Math. Biol.}, 68(5):1051--1070, 2014.
	
	\bibitem{Zet}
	N.~Zettili.
	\newblock {\em Quantum mechanics: concepts and applications}.
	\newblock John Wiley \& Sons, Ltd., Chichester, second edition, 2009.
	
\end{thebibliography}
\end{document}